\newcommand{\E}{\mathbf{E}}
\newcommand{\Var}{\mathbf{Var}}
\newcommand{\Cov}{\mathbf{Cov}}
\newtheorem{thm}{Theorem}[section]
\newtheorem{cor}[thm]{Corollary}
\newtheorem{lemma}[thm]{Lemma}
\newtheorem{prop}[thm]{Proposition}
\newtheorem{remark}[thm]{Remark}
\theoremstyle{definition}
\newtheorem{definition}[thm]{Definition}
\begin{document}
\title{On the Distribution of the Number of Copies of Weakly Connected Digraphs in Random $k$NN Digraphs}
\author{Selim Bahad{\i}r \& Elvan Ceyhan
\\ \small Department of Mathematics \\
\small Ko\c{c} University, \.{I}stanbul, Turkey}
\date{\today}

\maketitle
\pagenumbering{roman} \setcounter{page}{1}

\pagenumbering{arabic} \setcounter{page}{1}

\begin{abstract}
\noindent
In a digraph with $n$ vertices, a minuscule construct is a subdigraph with $m<<n$ vertices.
We study the number of copies of a minuscule constructs in $k$ nearest neighbor ($k$NN) digraph of the data from a random point process in $\mathbb{R}^d$.
Based on the asymptotic theory for functionals of point sets under homogeneous Poisson process and binomial point process, we provide a general result for the asymptotic behavior of the number of minuscule constructs and
as corollaries,
we obtain asymptotic results for the number of vertices with fixed indegree,
the number of shared $k$NN pairs
and the number of reflexive $k$NN's in a $k$NN digraph.
\end{abstract}

\noindent
{\it Keywords:} asymptotic normality; binomial process; central limit theorem; homogeneous point process; indegree; law of large numbers; reflexivity\\

\section{Introduction}
\label{sec:intro}

Random graph models such as Erd\H{o}s-R\'{e}nyi graphs, random geometric graphs and nearest neighbor type graphs
are used in various fields.
The most frequently studied is the one proposed in \cite{gilbert:1959}, denoted $\mathbf{G}(n,p)$,
in which each possible edge between $n$ vertices occurs independently with probability $0<p<1$.
However, in the literature,
$\mathbf{G}(n,p)$ is usually called Erd\H{o}s-R\'{e}nyi model as they developed the theory (\cite{erdos:1960}).
Another commonly considered model is random geometric graphs which are constructed by
randomly placing fixed number of vertices in some metric space (according to a specified probability distribution)
and connecting two vertices by an edge if and only if their distance is smaller than a certain neighborhood radius.
For more information about random geometric graphs, see \cite{penrose:2003}.
The number of copies of a fixed graph in random graphs are analyzed by many authors (e.g.,
\cite{nowicki:1988}, \cite{Rucinski:1988} and \cite{janson:2004} for $\mathbf{G}(n,p)$ and
\cite{najim:2003}, \cite{yu:2009} and \cite{shang:2010} for random geometric graphs).
Throughout this paper,
we consider random $k$NN digraphs and study asymptotic distribution of the number of minuscule constructs
for $k$NN digraphs based on data from a binomial point process or homogeneous Poisson process (HPP).

Let $k,d\geq 1$ be fixed integers,
$n\geq 2$ be an integer and $V$ be a finite set of points in $\mathbb{R}^d$.
For any $v\in V$, let $kNN(v)$ denote the set of $k$ closest points to $v$ among the points in $V\backslash \{v\}$ with respect to a given metric, and whenever $u\in kNN(v)$ we call $u$ as a $k$NN of $v$.
Throughout this article, we consider the usual Euclidean metric and denote the distance between the points $x$ and $y$ in $\mathbb{R}^d$ as $\|x-y\|$.
Obviously, $kNN(v)$ may not be well defined if there exist points $u,w\in V$ such that $\|u-v\|=\|w-v\|$.
However, such a tie occurs with probability zero for the random point sets which are obtained by HPP or binomial point process,
and hence we may always assume that pairwise distances are distinct and $kNN(v)$ is well defined for the point sets under consideration.
The $k$NN directed graph (or digraph) on the point set $V$, denoted as $kNND(V)$, is obtained by including an arc (i.e., directed edge) with tail $v$ and head $u$ whenever $u$ is one of the $k$NN's of $v$.
In other words, $kNND(V)$ is actually the digraph with vertex set $V$ and arc set $A=\{(u,v): u,v\in V , v\in kNN(u) \}$.

In a digraph, \emph{indegree} (resp. \emph{outdegree}) of a vertex $v$ is the number of arcs with head (resp. tail) $v$
and denoted as $d_{in}(v)$ (resp. $d_{out}(v)$).
Notice that the outdegree of a vertex in $kNND(V)$ is always $k$ as long as $V$ has at least $k+1$ vertices.
For $j\geq 0$, let $Q^{(k)}_j(V)$ denote the number of vertices of $kNND(V)$ with indegree $j$, that is, the number of points which are $k$NN of exactly $j$ points in $V$.
The problem of finding the probability that a random point is the NN of precisely $j$ other points is studied by many authors such as \cite{clark:1955}, \cite{roberts:1969}, \cite{newmanRT:1983} and \cite{henze:1987}.
The quantities $Q^{(1)}_j$'s are used in tests for spatial symmetry (see, \cite{ceyhan:2014}).
Also in \cite{enns:1999}, $Q^{(1)}_0, Q^{(1)}_1$ and $Q^{(1)}_2$ correspond to the number of lonely, normal and popular individuals in a population, respectively.

A triplet $(\{u,w\},v)$ with $u,v,w\in V$ is called \emph{shared} $k$NN's whenever $v\in kNN(u)$ and $v\in kNN(w)$; i.e., $v$ is a $k$NN to both $u$ and $w$ in $V$,
and the number of shared $k$NN's in $V$ is denoted as $Q^{(k)}(V)$.
In other words, $Q^{(k)}(V)$ counts pair of arcs sharing their heads in $kNND(V)$.
The quantity $Q^{(k)}$ can be expressed in terms of $Q^{(k)}_j$'s.
By a simple double counting argument, one can easily see that
\begin{align*}
Q^{(k)}(V)=\sum_{v\in V} {d_{in}(v) \choose 2}= \sum_{j\geq 0} \frac{j(j-1)}{2}Q^{(k)}_j(V)
\end{align*}
for any point set $V$.

An ordered pair of vertices $\{u,v\}$ is called a \emph{reflexive} $k$NN pair whenever $u\in kNN(v)$ and $v\in kNN(u)$,
that is, $u$ and $v$ are $k$NN of each other (\cite{cox:1981}).
Other authors have called these pairs as isolated nearest neighbors (\cite{pickard:1982}) or mutual nearest neighbors (\cite{schiling:1986}).
In graph theory, reflexive pairs are also referred to as symmetric arcs (\cite{chartrand:1996}).
We denote the number of reflexive pairs in $kNND(V)$ as $R^{(k)}(V)$.
The quantity $R^{(1)}$ could be of interest for inferential purposes as well,
since it is a measure of mutual (symmetric) spatial dependence between points,
which might indicate a special and/or stronger form of clustering of data points.
For instance, a simple test based on the proportion of the number of
reflexive pairs to the sample size was presented by \cite{dacey:1960}
to interpret the degree of regularity or clustering of the locations of towns alongside a river.

Numbers of reflexive and shared $k$NN pairs are of importance in various fields.
For example,
in spatial data analysis,
the distributions of the tests based on nearest neighbor contingency tables depend on these two quantities
(\cite{cuzick:1990}, \cite{dixon:1994} and \cite{ceyhan:cell2009}).
Moreover, neighbor sharing type quantities such as $Q^{(1)}$ are also of interest for the problem of estimating the intrinsic dimension of a data set (see, \cite{brito:2013}).
For a set of ten points, 1NN and 2NN digraphs are presented in Figure \ref{fig:2NNofV} together with the corresponding $R^{(k)}, Q^{(k)}, Q^{(k)}_j$ values.
\begin{figure}
\center
\scalebox{.8}{
\begin{tikzpicture}[line cap=round,line join=round,>=triangle 45,x=1.0cm,y=1.0cm]
\begin{scope}
\clip(0.9179294542160873,0.9302415733519224) rectangle (7.099192962929973,7.111505082065798);
\draw (1.,7.)-- (7.,7.);
\draw (7.,7.)-- (7.,1.);
\draw (7.,1.)-- (1.,1.);
\draw (1.,1.)-- (1.,7.);
\tikzset{->-/.style={decoration={
  markings,
  mark=at position #1 with {\arrow{>}}},postaction={decorate}}}

\draw [->-=.55] (1.735784653837004,6.074981229072559) -- (2.0767011370474076,4.2756997899065405);
\draw [->-=.55] (3.9706815993274276,5.374208458028952) to [bend left] (4.273718473292231,4.086301743678539);
\draw [->-=.55] (6.281337763309051,5.885583182844558) -- (3.9706815993274276,5.374208458028952);
\draw [->-=.55] (2.0767011370474076,4.2756997899065405) to [bend left] (2.9479321496962165,3.385528972634931);
\draw [->-=.55] (4.273718473292231,4.086301743678539) to [bend left] (3.9706815993274276,5.374208458028952);
\draw [->-=.55] (2.9479321496962165,3.385528972634931) to [bend left] (2.0767011370474076,4.2756997899065405);
\draw [->-=.55] (6.262397958686251,3.442348386503332) to [bend left] (6.,2.);
\draw [->-=.55] (1.3759283660038002,1.4157892918637107) -- (3.364607851397821,1.5104883149777117);
\draw [->-=.55] (3.364607851397821,1.5104883149777117) -- (2.9479321496962165,3.385528972634931);
\draw [->-=.55] (6.,2.) to [bend left] (6.262397958686251,3.442348386503332);
\draw (1.5205596013051847,6.563836972799612) node[anchor=north west] {$v_1$};
\draw (1.0556734878364524,1.442871720441019) node[anchor=north west] {$v_3$};
\draw (4.172132248497213,4.012264325607587) node[anchor=north west] {$v_2$};
\draw (6.048894706574688,2.152719871732661) node[anchor=north west] {$v_4$};
\draw (3.5566441812564834,5.957898800495242) node[anchor=north west] {$v_5$};
\draw (2.7646919122935623,4.048354346620315) node[anchor=north west] {$v_6$};
\draw (6.224382773815419,6.407220922369064) node[anchor=north west] {$v_7$};
\draw (3.157924013154658,1.5000897246435645) node[anchor=north west] {$v_8$};
\draw (6.313254790625601,3.5990322247464928) node[anchor=north west] {$v_9$};
\draw (1.3377776055077303,4.528804451683956) node[anchor=north west] {$v_{10}$};
\begin{scriptsize}
\draw [fill=black] (2.0767011370474076,4.2756997899065405) circle (1.5pt);
\draw [fill=black] (3.9706815993274276,5.374208458028952) circle (1.5pt);
\draw [fill=black] (6.281337763309051,5.885583182844558) circle (1.5pt);
\draw [fill=black] (6.262397958686251,3.442348386503332) circle (1.5pt);
\draw [fill=black] (4.273718473292231,4.086301743678539) circle (1.5pt);
\draw [fill=black] (1.735784653837004,6.074981229072559) circle (1.5pt);
\draw [fill=black] (6.,2.) circle (1.5pt);
\draw [fill=black] (2.9479321496962165,3.385528972634931) circle (1.5pt);
\draw [fill=black] (3.364607851397821,1.5104883149777117) circle (1.5pt);
\draw [fill=black] (1.3759283660038002,1.4157892918637107) circle (1.5pt);
\end{scriptsize}
\end{scope}

\begin{scope}[shift={(7,0)}]
\clip(0.9179294542160873,0.9302415733519224) rectangle (7.099192962929973,7.111505082065798);
\draw (1.,7.)-- (7.,7.);
\draw (7.,7.)-- (7.,1.);
\draw (7.,1.)-- (1.,1.);
\draw (1.,1.)-- (1.,7.);
\tikzset{->-/.style={decoration={
  markings,
  mark=at position #1 with {\arrow{>}}},postaction={decorate}}}

\draw [->-=.55] (1.735784653837004,6.074981229072559) to [bend left] (2.0767011370474076,4.2756997899065405);
\draw [->-=.55] (3.9706815993274276,5.374208458028952) to [bend left] (4.273718473292231,4.086301743678539);
\draw [->-=.55] (6.281337763309051,5.885583182844558) -- (3.9706815993274276,5.374208458028952);
\draw [->-=.55] (2.0767011370474076,4.2756997899065405) to [bend left] (2.9479321496962165,3.385528972634931);
\draw [->-=.55] (4.273718473292231,4.086301743678539) to [bend left] (3.9706815993274276,5.374208458028952);
\draw [->-=.55] (2.9479321496962165,3.385528972634931) to [bend left] (2.0767011370474076,4.2756997899065405);
\draw [->-=.55] (6.262397958686251,3.442348386503332) to [bend left] (6.,2.);
\draw [->-=.55] (1.3759283660038002,1.4157892918637107) to [bend left] (3.364607851397821,1.5104883149777117);
\draw [->-=.55] (3.364607851397821,1.5104883149777117) -- (2.9479321496962165,3.385528972634931);
\draw [->-=.55] (6.,2.) to [bend left] (6.262397958686251,3.442348386503332);
\draw [->-=.55] (1.735784653837004,6.074981229072559) -- (3.9706815993274276,5.374208458028952);
\draw [->-=.55] (3.9706815993274276,5.374208458028952) -- (2.0767011370474076,4.2756997899065405);
\draw [->-=.55] (6.281337763309051,5.885583182844558) -- (6.262397958686251,3.442348386503332);
\draw [->-=.55] (2.0767011370474076,4.2756997899065405) to [bend left] (1.735784653837004,6.074981229072559);
\draw [->-=.55] (4.273718473292231,4.086301743678539) to [bend left] (2.9479321496962165,3.385528972634931);
\draw [->-=.55] (2.9479321496962165,3.385528972634931) to [bend left] (4.273718473292231,4.086301743678539);
\draw [->-=.55] (6.262397958686251,3.442348386503332) -- (4.273718473292231,4.086301743678539);
\draw [->-=.55] (1.3759283660038002,1.4157892918637107) -- (2.9479321496962165,3.385528972634931);
\draw [->-=.55] (3.364607851397821,1.5104883149777117) to [bend left] (1.3759283660038002,1.4157892918637107);
\draw [->-=.55] (6.,2.) -- (3.364607851397821,1.5104883149777117);
\draw (1.5205596013051847,6.563836972799612) node[anchor=north west] {$v_1$};
\draw (1.0556734878364524,1.442871720441019) node[anchor=north west] {$v_3$};
\draw (4.172132248497213,4.012264325607587) node[anchor=north west] {$v_2$};
\draw (6.048894706574688,2.152719871732661) node[anchor=north west] {$v_4$};
\draw (3.5566441812564834,5.957898800495242) node[anchor=north west] {$v_5$};
\draw (2.6646919122935623,4.048354346620315) node[anchor=north west] {$v_6$};
\draw (6.224382773815419,6.407220922369064) node[anchor=north west] {$v_7$};
\draw (3.157924013154658,1.5000897246435645) node[anchor=north west] {$v_8$};
\draw (6.313254790625601,3.5990322247464928) node[anchor=north west] {$v_9$};
\draw (1.3377776055077303,4.528804451683956) node[anchor=north west] {$v_{10}$};
\begin{scriptsize}
\draw [fill=black] (2.0767011370474076,4.2756997899065405) circle (1.5pt);
\draw [fill=black] (3.9706815993274276,5.374208458028952) circle (1.5pt);
\draw [fill=black] (6.281337763309051,5.885583182844558) circle (1.5pt);
\draw [fill=black] (6.262397958686251,3.442348386503332) circle (1.5pt);
\draw [fill=black] (4.273718473292231,4.086301743678539) circle (1.5pt);
\draw [fill=black] (1.735784653837004,6.074981229072559) circle (1.5pt);
\draw [fill=black] (6.,2.) circle (1.5pt);
\draw [fill=black] (2.9479321496962165,3.385528972634931) circle (1.5pt);
\draw [fill=black] (3.364607851397821,1.5104883149777117) circle (1.5pt);
\draw [fill=black] (1.3759283660038002,1.4157892918637107) circle (1.5pt);
\end{scriptsize}
\end{scope}
\end{tikzpicture}}
\caption{
On the left, 1NN digraph of $V=\{v_1,\dots , v_{10} \}$.
Notice that there are three reflexive pairs, namely $\{v_6,v_{10}\}$, $ \{v_2,v_5\}$, $\{v_4,v_9\})$,
and hence $R^{(1)}(V)=3$.
Also, note that indegrees of $v_1, \dots ,v_{10}$ are $0,1,0,1,2,2,0,1,1,2$, respectively, and thus,
$Q^{(1)}(V)=3, Q^{(1)}_0(V)=3, Q^{(1)}_1(V)=4, Q^{(1)}_2(V)=3$ and $Q^{(1)}_j(V)=0$ for every $j\geq 3$.
On the right, 2NN digraph of $V$.
Notice that $R^{(2)}(V)=6, Q^{(2)}(V)=17, Q^{(2)}_0(V)=1, Q^{(2)}_1(V)=3, Q^{(2)}_2(V)=2,  Q^{(2)}_3(V)=3, Q^{(2)}_4(V)=1 $
and $Q^{(2)}_j(V)=0$ for every $j\geq 5$.
} \label{fig:2NNofV}
\end{figure}

For any set $A\subset \mathbb{R}^d$, let $m(A)$ denote the Lebesgue measure of the set $A$ and $\partial A$ denote the boundary of $A$.
Let $B_0$ be a fixed bounded Borel set in $\mathbb{R}^d$ with $m(B_0)>0$ and $m(\partial B_0)=0$.
In this paper, we investigate the asymptotic distributions of $R^{(k)}$, $Q^{(k)}_j$ and $Q^{(k)}$
for HPP of high intensity on $B_0$, and large independent samples of non-random size from the uniform distribution on $B_0$
(i.e., sample from a binomial process on $B_0$).
Using the results of \cite{penrose:2001,penrose:2002},
we provide LLN and CLT results on the number of copies of weakly connected digraphs in $kNND$ under HPP and binomial process in general.
As corollaries of these results, we obtain asymptotic results for $R^{(k)}$, $Q^{(k)}_j$ and $Q^{(k)}$.
A crucial condition on the minuscule construct or the subdigraph is connectedness;
and another is number of vertices of the subdigraph should be fixed.

Section \ref{sec:prelim} presents the main result and its proof is given in Section \ref{sec:proofmain}.
We study the asymptotic behavior of the number of vertices with a given indegree in Section \ref{sec:jindegree}.
In Section \ref{sec:shared} and \ref{sec:reflexive},
we provide asymptotic results for the number of shared pairs and the number of reflexive pairs, respectively.
Under a special setting, we study pairwise dependence of these quantities in Section \ref{sec:depRQQj}.
Discussion and conclusions are provided in Section \ref{sec:tr2discconc}.

\section{Preliminaries}
\label{sec:prelim}
A \emph{directed graph} (or simply \emph{digraph}) $D$ consists of a non-empty set
$V (D)$ of elements called \emph{vertices}
and a set $A(D)$ of ordered pairs of distinct vertices called \emph{arcs} (or \emph{directed edges}).
We call $V (D)$ the vertex set and $A(D)$ the arc set of $D$.
A \emph{graph} $G$ is a non-empty set $V(G)$ of elements called \emph{vertices} together with a set $E(G)$ of
unordered pairs of vertices of $G$ called \emph{edges}.
An edge $\{u,v\}$ is denoted by $uv$ for convenience in the text.
A graph or a digraph is \emph{finite} if its vertex set is finite.
A $u-v$ \emph{path} in a graph $G$ is a sequence of pairwise distinct vertices $u=u_1,u_2,\dots ,u_m=v$ such that $u_i u_{i+1}$ is an edge in $G$ for each $1\leq i \leq m-1$, and
the \emph{length} of the path is the number of edges in the path.
A graph $G$ is called \emph{connected} if there exists a $u-v$ path for every pair of vertices $u$ and $v$ in $G$.
The \emph{distance} between the vertices $u$ and $v$ of a connected graph $G$ is the length of a shortest $u-v$ path.
The \emph{underlying graph} of a digraph $D$ is the graph obtained by replacing each arc with an (undirected) edge,
disallowing multiple edges between two vertices.
A digraph is called \emph{weakly connected} if its underlying graph is connected.

A digraph $D_1$ is a \emph{subdigraph} of a digraph $D_2$ if $V(D_1)\subseteq V(D_2)$ and $A(D_1) \subseteq A(D_2)$.
A digraph $D_1$ is \emph{isomorphic} to a digraph $D_2$ (or $D_1$ and $D_2$ are \emph{isomorphic})
if there exists a bijection $f:V(D_1)\rightarrow V(D_2)$ such that
$(u,v)\in A(D_1)$ if and only if $(f(u),f(v))\in A(D_2)$.

Let $D$ be a fixed weakly connected digraph.
For any finite point set $V$ in $\mathbb{R}^d$,
let $H_{D}(V)$ denote the number of subdigraphs of $kNND(V)$ isomorphic to $D$.
In our setting,
a weakly connected subdigraph with fixed number of vertices is referred to a minuscule construct,
and we are interested in the random variable $H_{D}(V)$
when $V$ consists of random points from HPP or binomial process.
For example, if $D$ is the digraph with $V(D)=\{1,2\}$ and $A(D)=\{(1,2), (2,1) \}$, then
we have $R^{(k)}(V)= H_{D}(V)$.
Similarly, we have $Q^{(k)}(V)= H_{D}(V)$ whenever $D$ is the digraph with $V(D)=\{1,2,3\}$ and $A(D)=\{ (1,2),(3,2) \}$.

Let $(X_n)_{n\geq 1}$ be a sequence of random variables and $x$ be a constant.
If $\lim_{n \rightarrow \infty} \E(X_n)=x$, then we write $X_n \xrightarrow{c.m.} x$ as $n\rightarrow \infty$ (convergence of means).
If $\sum_n P(|X_n-x|> \epsilon )< \infty$ for every $\epsilon >0$,
then we say $X_n$ \emph{converges completely} to $x$ and denote $X_n\xrightarrow{c.c.} x$ as $n\rightarrow \infty$.
We use the notation $\xrightarrow{c.m.c.c.}$, if both types of convergence hold.
Notice that complete convergence implies almost sure convergence but not vice versa.

Let $\mathcal{U}_n=\{U_1,\dots , U_n \}$, where $U_1,\dots , U_n$ are i.i.d. uniform random variables on $B_0$.
Also, let $\mathcal{P}_n$ be the HPP of intensity $n/m(B_0)$ on $B_0$.

\begin{thm}[Main Theorem]
\label{thm:mainllnclt}
Let $m$ be a given positive integer, $D_1,\dots, D_m$ be finite weakly connected digraphs,
$a_1,\dots, a_m$ be real numbers and $H(V)=a_1H_{D_1}(V)+\cdots +a_m H_{D_m}(V)$ for every finite $V\subset \mathbb{R}^d$.
Then there exist constants $\xi, \tau^2,\sigma^2$ with $0 \leq \tau^2 \leq \sigma^2$such that
as $n\rightarrow \infty$,
\begin{gather*}
n^{-1} H(\mathcal{U}_n) \xrightarrow{c.m.c.c.} \xi , \\
  n^{-1} \Var (H(\mathcal{U}_{n})) \rightarrow \tau^2,\\
n^{-1/2} ( H(\mathcal{U}_{n})-\E(H(\mathcal{U}_{n})) ) \xrightarrow{\mathcal{L}} \mathcal{N}(0,\tau^2),\\
n^{-1} H(\mathcal{P}_n) \xrightarrow{c.m.c.c.} \xi,\\
  n^{-1} \Var (H(\mathcal{P}_{n})) \rightarrow \sigma^2,\\
n^{-1/2} ( H(\mathcal{P}_{n})-\E(H(\mathcal{P}_{n})) ) \xrightarrow{\mathcal{L}} \mathcal{N}(0,\sigma^2),
\end{gather*}
where $\xrightarrow{\mathcal{L}}$ denotes convergence in law and
$\mathcal{N}(a,b^2)$ is the normal distribution with mean $a$ and variance $b^2$.
Moreover, $\xi$, $\tau^2$ and $\sigma^2$ are independent of the choice of $B_0$.
\end{thm}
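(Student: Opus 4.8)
The plan is to express $H$ as a sum of \emph{local score functions} and then to invoke the general limit theory for stabilizing functionals of point processes developed in \cite{penrose:2001,penrose:2002}. For a fixed weakly connected digraph $D$ and a finite set $\mathcal{X}\subset\mathbb{R}^d$, I would attach to each vertex the fraction of the copies through it,
\[
\xi_{D}(x,\mathcal{X}):=\frac{1}{|V(D)|}\,\#\bigl\{\text{subdigraphs of }kNND(\mathcal{X})\text{ isomorphic to }D\text{ and containing }x\bigr\},\qquad x\in\mathcal{X}.
\]
Since every such copy has exactly $|V(D)|$ vertices, summing over $x$ counts each copy once, so that $H_{D}(\mathcal{X})=\sum_{x\in\mathcal{X}}\xi_{D}(x,\mathcal{X})$; setting $\xi(x,\mathcal{X}):=\sum_{i=1}^{m}a_i\,\xi_{D_i}(x,\mathcal{X})$ gives the representation $H(\mathcal{X})=\sum_{x\in\mathcal{X}}\xi(x,\mathcal{X})$ to which the general theory applies.

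The bulk of the proof is verifying the three hypotheses of that theory. \textbf{Boundedness.} The out-degree in $kNND(\mathcal{X})$ is always $k$, and the in-degree is bounded by a constant $c_{d,k}$ depending only on $d$ and $k$: a point can be a $k$NN of only boundedly many others, by a standard cone/packing argument in $\mathbb{R}^d$. Consequently the number of copies of $D$ through any point is at most a constant depending only on $D,k,d$, so each $\xi_{D}$, and hence $\xi$, is \emph{uniformly bounded}. This makes the moment conditions required for the law of large numbers and the central limit theorem automatic. \textbf{Translation invariance.} Because $kNND(\mathcal{X})$ is determined solely by the Euclidean interpoint distances, which are translation invariant, one has $\xi(x,\mathcal{X})=\xi(x+y,\mathcal{X}+y)$ for every $y\in\mathbb{R}^d$.

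\textbf{Exponential stabilization.} Since $D$ is weakly connected on a fixed number of vertices, every vertex of a copy through $x$ lies within $|V(D)|-1$ arcs of $x$ in the underlying graph, and each arc encodes a $k$NN relation at a point near $x$. Hence $\xi(x,\mathcal{X})$ is unchanged by modifying $\mathcal{X}$ outside a ball $B(x,R(x))$, where $R(x)$ is a \emph{radius of stabilization}. The crux is to show that $R(x)$ has exponentially decaying tails: partitioning the space around each relevant point into a fixed finite family of cones of small apex angle, the $k$NN set at that point is frozen once every cone contains at least $k$ sample points, and since the number of points in a ball is Poisson with mean proportional to its volume, the probability that some nearby cone is still underpopulated decays exponentially in $R$. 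This geometric estimate is the main obstacle; everything else is bookkeeping.

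With uniform boundedness, translation invariance, and exponential stabilization established, the laws of large numbers and central limit theorems for $\mathcal{U}_n$ and $\mathcal{P}_n$ of \cite{penrose:2001,penrose:2002} apply directly and yield the six displayed limits simultaneously, with the common mean constant
\[
\xi=\E\bigl[\xi(\mathbf{0},\,\mathcal{H}\cup\{\mathbf{0}\})\bigr],
\]
where $\mathcal{H}$ is a rate-one homogeneous Poisson process on $\mathbb{R}^d$, and with $\tau^2,\sigma^2$ given by the corresponding add-one-cost integrals. The inequality $0\le\tau^2\le\sigma^2$ is precisely the de-Poissonization relation between the binomial and Poisson variances furnished by that theory. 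Finally, for the independence of $\xi,\tau^2,\sigma^2$ from $B_0$, I would use that $kNND$ is invariant not only under translations but also under dilations of $\mathbb{R}^d$, since scaling all points preserves the ordering of distances and hence every arc. Thus the local picture under an intensity-$\lambda$ Poisson process coincides, after rescaling space by $\lambda^{-1/d}$, with the unit-intensity picture, so the per-point constants are pure numbers independent of the intensity $n/m(B_0)$; the shape of $B_0$ enters only through boundary contributions, which vanish because $m(\partial B_0)=0$.
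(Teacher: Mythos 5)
Your overall strategy is the same as the paper's: write $H_D(V)=\sum_{v\in V}h_D(V,v)/|V(D)|$ with $h_D$ the number of copies through $v$, bound $h_D$ uniformly via the packing/cone bound on in-degrees, use invariance of $kNND$ under translations and dilations, and feed everything into the stabilization machinery of Penrose--Yukich. The boundedness, invariance, and ``independence of $B_0$'' parts of your argument match the paper and are fine.

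The genuine gap is in the stabilization step, which is the real content of the proof. You assert that because every vertex of a copy of $D$ through $x$ is within $|V(D)|-1$ arcs of $x$, and ``each arc encodes a $k$NN relation at a point near $x$,'' the functional is determined by the configuration in a ball $B(x,R(x))$ whose radius is controlled by a single cone-covering criterion (every cone around $x$ contains $k$ points). This does not follow. First, the arcs of the copy are $k$NN relations at the \emph{other} vertices of the chain, whose locations are random and not a priori near $x$; in particular a far-away, isolated point $v$ can have a point $u$ near the origin among its $k$ nearest neighbors, and populating cones around the origin alone does not preclude this. The paper handles exactly this with an iterated construction: random radii $S_0<S_1<\cdots<S_l$ (with $l$ the length of a longest path in the underlying graph of $D$), where Lemma \ref{lem:S0} freezes the relations incident to the origin and Lemma \ref{lem:Sj} shows that if $\|u\|<S_j$ and $u,v$ are in a $k$NN relation in either direction then $\|v\|<S_{j+1}$; the latter requires populating the annular slices $C_i(T_j)\setminus C_i(3S_j)$ and a cosine-rule estimate, not just the basic cone covering. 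Your one-step argument would work only for $l\le 1$ and must be iterated along the chain, with each stage's radius depending on the previous (random) one. Second, you aim at \emph{exponential} stabilization, which is neither needed nor what the cited theorems require: Theorem 2.1 of \cite{penrose:2001} and Theorem 3.2 of \cite{penrose:2002} ask for an a.s.\ finite radius of stabilization together with a bounded add one cost (the latter you do not verify explicitly, though it follows readily from your uniform bound on $h_D$ once one notes that every created or destroyed copy contains one of the boundedly many neighbors of the inserted point). Proving exponential tails for the iterated radii would be substantially harder than proving their a.s.\ finiteness, and the extra strength buys you nothing here.
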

The proof of Theorem \ref{thm:mainllnclt} is given in Section \ref{sec:proofmain}.

\section{Proof of the Main Theorem}
\label{sec:proofmain}

We first borrow some notation and definitions regarding the CLT and LLN results from \cite{penrose:2001,penrose:2002}.

For any set $A\subset \mathbb{R}^d$ and $y\in  \mathbb{R}^d$, denote by $A+y$ the set $\{ x+y:\ x\in A\}$.
Also, for any $c\in \mathbb{R}$, let $cA$ denote the set $\{cx:x\in A\}$.
For $x\in \mathbb{R}^d$ and $r>0$, let $B_r(x)$ denote the Euclidean open ball centered at $x$ and with radius $r$.
Let ${\rm card} (A)$ denote the cardinality of $A$.
Let $\mathcal{P}$ be HPP of unit intensity on $\mathbb{R}^d$.
Let $B_0$ be a fixed bounded Borel set in $\mathbb{R}^d$ with positive volume and $m(\partial (B_0))=0$.

Let $H$ be a real valued functional defined for all finite subsets of $\mathbb{R}^d$.
$H$ is called \emph{translation-invariant}, if $H(V+y)=H(V)$ for all $V \subset \mathbb{R}^d$ and $y\in \mathbb{R}^d$, and
\emph{scale-invariant}, if $H(cV)=H(V)$ for all $V \subset \mathbb{R}^d$ and $c\neq 0$.
Notice that the functionals we consider only depend on the ordering of the pairwise distances between sample points.
Henceforth, these functionals are translation-invariant and scale-invariant, and thus,
without loss of generality, we may assume that $B_0$ is of unit volume and contains the origin.

We say $H$ is \emph{linearly bounded}, if there is a constant $c_1$ such that $|H(V)|\leq c_1 \cdot \text{card}(V)$ for every finite $V \subset \mathbb{R}^d$.
Remaining conditions on $H$ are defined in terms of the \emph{add one cost},
meaning that the increment in $H$ caused by inserting a point at the origin into a finite point set $V \subset \mathbb{R}^d$, which is formally given by
\begin{align}
\Delta_H (V):= H(V\cup \{0\})-H(V). \nonumber
\end{align}
The functional $H$ has \emph{bounded add one cost}, if there exists a constant $c_2$ such that $|\Delta_H ( V )|\leq  c_2$
for every finite point set $V \subset \mathbb{R}^d$.

The functional $H$ is called \emph{strongly stabilizing}, if there exist a.s. finite random variables $S$
(a \textit{radius of stabilization} of $H$) and $\Delta_H (\infty)$ such that with probability 1, we have
$$
\Delta_H ( (\mathcal{P} \cap B_S(0) )\cup A)= \Delta_H (\infty)
$$
for all finite $A\subset \mathbb{R}^d \backslash B_S(0)$.

Hence, $S$ is a radius  of stabilization, if the add one cost is unaffected by
the changes in the configuration outside the ball $B_S(0)$.
In other words,
$$
\Delta_H ( (\mathcal{P} \cap B_S(0) )\cup A_1)=\Delta_H ( (\mathcal{P} \cap B_S(0) )\cup A_2)
$$
for every finite $A_1,A_2 \subset \mathbb{R}^d \backslash B_S(0)$ and this add one cost is denoted by $\Delta_H (\infty)$.
Notice that if $H$ has a radius of stabilization and bounded add one cost, then it is strongly stabilizing.

\subsection{CLT Results}
One can easily obtain the following proposition by applying Theorem 2.1 in \cite{penrose:2001}.
\begin{prop}
\label{prop:mainclt}
Suppose that $H$ is translation-invariant, scale-invariant, linearly bounded, has a radius of stabilization and
bounded add one cost.
Then there exist constants $\sigma^2$ and $\tau^2$, with $0\leq \tau^2 \leq \sigma^2$, such that as $n\rightarrow \infty$,
$\Var(H(\mathcal{P}_n))/n$ converges to $\sigma^2$, $\Var (H(\mathcal{U}_{n}))/n$ converges to $\tau^2$
and both of $H(\mathcal{P}_n)$ and $H(\mathcal{U}_{n})$ are asymptotically normal.
Also, $\sigma^2$ and $\tau^2$ are independent of the choice of $B_0$.
Moreover, if the distribution of $\Delta_H (\infty)$ is non-degenerate, then $\tau^2>0$ and hence also $\sigma^2>0$.
\end{prop}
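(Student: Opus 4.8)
The plan is to obtain the proposition as an immediate consequence of the abstract central limit theorem for stabilizing functionals due to Penrose and Yukich, namely Theorem 2.1 of \cite{penrose:2001}, so that the proof becomes entirely a matter of checking that the hypotheses we have placed on $H$ imply those required there, rather than of establishing any new concentration estimate. I would first line up the two sets of assumptions and then invoke the theorem verbatim.

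The verification proceeds in three easy steps. First, our assumptions already yield \emph{strong stabilization}: as noted in the discussion preceding the proposition, a functional possessing a radius of stabilization $S$ together with a bounded add one cost is automatically strongly stabilizing, and moreover $|\Delta_H(\infty)|\le c_2$ almost surely. Second, I would check the quantitative moment condition required by the Penrose--Yukich theorem, which is a uniform bound of order $p>2$ on the add one cost over the relevant Poisson and binomial inputs; here this needs no computation, since $|\Delta_H(V)|\le c_2$ for every finite $V$ forces every moment of the add one cost to be at most a power of $c_2$, so the condition holds trivially. Third, the linear boundedness of $H$ supplies exactly the growth control on $H$ itself that the theorem uses in its de-Poissonization argument to transfer the conclusion from $\mathcal{P}_n$ to $\mathcal{U}_n$.

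With the hypotheses in force, Theorem 2.1 of \cite{penrose:2001} produces constants $\sigma^2$ and $\tau^2$ with $0\le\tau^2\le\sigma^2$ for which $\Var(H(\mathcal{P}_n))/n\to\sigma^2$ and $\Var(H(\mathcal{U}_n))/n\to\tau^2$, and it gives the asymptotic normality of both $H(\mathcal{P}_n)$ and $H(\mathcal{U}_n)$. To obtain the independence of $\sigma^2$ and $\tau^2$ from the choice of $B_0$, I would use translation- and scale-invariance: the limiting constants are expressed through the law of $\Delta_H(\infty)$ under the unit-intensity HPP on all of $\mathbb{R}^d$, a quantity intrinsic to $H$. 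Because $H$ depends only on the ordering of the pairwise distances, translating and rescaling $B_0$ leaves this local limit unaltered, which is precisely the asserted $B_0$-independence.

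For the final claim I would appeal to the variance lower bound accompanying the theorem: when the law of $\Delta_H(\infty)$ is non-degenerate one gets $\tau^2>0$, and then $\sigma^2\ge\tau^2>0$ is immediate from the ordering $\tau^2\le\sigma^2$. I expect the only genuinely delicate point to be matching our hypotheses cleanly against the precise formulation in \cite{penrose:2001} — confirming that bounded add one cost is more than enough to meet their moment-of-order-$p$ condition, and that their explicit variance formulas are the ones rendering both the $B_0$-independence and the non-degeneracy conclusions transparent; everything else is routine bookkeeping.
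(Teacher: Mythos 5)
Your proposal takes exactly the same route as the paper, which derives Proposition \ref{prop:mainclt} simply by invoking Theorem 2.1 of \cite{penrose:2001}; the paper offers no further detail, so your explicit verification that bounded add one cost implies strong stabilization and every required moment condition, and that linear boundedness handles the de-Poissonization, is a correct and somewhat more careful rendering of the same argument.
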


\begin{lemma}
\label{lem:linearsum}
If $H_i$ satisfies the conditions of Proposition \ref{prop:mainclt} for each $1\leq i \leq m$, so does $a_1H_1+\cdots +a_m H_m$ for any real numbers $a_1, \dots , a_m$.
\end{lemma}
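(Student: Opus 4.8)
The plan is to verify each of the five defining properties of Proposition~\ref{prop:mainclt} for the linear combination $H:=a_1H_1+\cdots +a_mH_m$, checking them essentially one at a time. The guiding principle is that every property except strong stabilization is preserved under linear combinations by an elementary argument, and strong stabilization for a linear combination is ensured by the closing remark of Section~\ref{sec:proofmain}: a functional with a radius of stabilization and bounded add one cost is automatically strongly stabilizing. So the real content is to exhibit a single radius of stabilization for $H$ and to bound its add one cost.

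First I would dispose of the linear properties. Translation- and scale-invariance are immediate: if $H_i(V+y)=H_i(V)$ and $H_i(cV)=H_i(V)$ for each $i$, then the same identities hold for any fixed linear combination, since $H(V+y)=\sum_i a_i H_i(V+y)=\sum_i a_i H_i(V)=H(V)$, and likewise for scaling. Linear boundedness follows from the triangle inequality: if $|H_i(V)|\leq c_1^{(i)}\,\mathrm{card}(V)$, then $|H(V)|\leq\bigl(\sum_i |a_i|\,c_1^{(i)}\bigr)\mathrm{card}(V)$, so $c_1:=\sum_i|a_i|c_1^{(i)}$ works. The key observation that makes the remaining two properties tractable is that the add one cost is linear in $H$, namely
\begin{align*}
\Delta_H(V)=H(V\cup\{0\})-H(V)=\sum_{i=1}^m a_i\bigl(H_i(V\cup\{0\})-H_i(V)\bigr)=\sum_{i=1}^m a_i\,\Delta_{H_i}(V).
\end{align*}
From this, bounded add one cost is immediate: if $|\Delta_{H_i}(V)|\leq c_2^{(i)}$ for all $i$, then $|\Delta_H(V)|\leq\sum_i|a_i|c_2^{(i)}=:c_2$.

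Next I would handle the radius of stabilization, which is the one step that requires a genuine (if short) idea rather than a bound. For each $i$ let $S_i$ be a radius of stabilization of $H_i$, so that $\Delta_{H_i}\bigl((\mathcal{P}\cap B_{S_i}(0))\cup A\bigr)$ is unaffected by changes in $A$ outside $B_{S_i}(0)$. Setting $S:=\max\{S_1,\dots,S_m\}$, each $S_i\leq S$ so every configuration outside $B_S(0)$ lies outside each $B_{S_i}(0)$; hence each $\Delta_{H_i}$ is stabilized at radius $S$, and by the linearity identity above $\Delta_H$ is stabilized at radius $S$ as well. Since $S$ is a finite maximum of a.s.\ finite random variables, it is a.s.\ finite, so $S$ is a valid radius of stabilization for $H$. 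Having a radius of stabilization together with the bounded add one cost established above, the remark at the end of Section~\ref{sec:proofmain} gives that $H$ is strongly stabilizing, which completes the verification of all hypotheses of Proposition~\ref{prop:mainclt}.

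The only subtlety I anticipate is purely notational: one must take the \emph{maximum} of the stabilization radii rather than, say, a sum, and must be careful that the stabilization property is monotone in the radius (enlarging the ball from $B_{S_i}(0)$ to $B_S(0)$ preserves the invariance of the add one cost, since any $A\subset\mathbb{R}^d\setminus B_S(0)$ is in particular a subset of $\mathbb{R}^d\setminus B_{S_i}(0)$). No step presents a real obstacle; the lemma is essentially a bookkeeping argument exploiting the linearity of $\Delta_H$ in $H$, and the main point worth stating cleanly is that linearity and the $\max$-construction together transfer stabilization from the summands to the combination.
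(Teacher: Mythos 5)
Your proof is correct and follows essentially the same route as the paper's: both verify translation- and scale-invariance and linear boundedness directly, exploit the linearity $\Delta_H=\sum_i a_i\Delta_{H_i}$ for the bounded add one cost, and take $S=\max_i S_i$ as the common radius of stabilization (the paper merely reduces to $m=2$ by induction first, which changes nothing of substance). Your explicit remark on why stabilization is monotone in the radius is a point the paper asserts without comment, so no gap on either side.
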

\begin{proof}
First note that by applying induction on $m$, it suffices to prove the claim for $m=2$ .
Let $H=a_1H_1+a_2 H_2$.
$$
H(V+y)= a_1H_1(V+y)+a_2 H_2(V+y)=a_1H_1(V)+a_2 H_2(V)=H(V)
$$
for all $V \subset \mathbb{R}^d$ and $y\in \mathbb{R}^d$ since $H_1$ and $H_2$ are translation-invariant,
and hence $H$  is translation-invariant as well.

As $H_1$ and $H_2$ are scale-invariant, for all $V \subset \mathbb{R}^d$ and every nonzero $a \in \mathbb{R}$, we have
$$
H(aV)=a_1 H_1(aV)+a_2 H_2(aV)=a_1 H_1(V)+a_2 H_2(V)=H(V)
$$
which implies that $H$ is scale-invariant too.

As both $H_1$ and $H_2$ are linearly bounded and have bounded add one costs,
there exist constant $c_1$ and $c_2$ such that $|H_i(V)|\leq c_i \cdot \text{card}(V)$ and $|\Delta_{H_i} (V)|\leq c_i$ , for $i=1,2$.
So, by triangular inequality, we have
\begin{align}
|H(V)|=|a_1 H_1(V)+a_2 H_2(V)|\leq (|a_1|c_1+|a_2|c_2) \cdot \text{card}(V), \nonumber
\end{align}
which implies that $H$ is linearly bounded.

Let $S_i$ be a radius of stabilization of $H_i$ for $i=1,2$, and $S=\max \{S_1, S_2\}$.
Then, $S$ is a radius of stabilization for both $H_1$ and $H_2$.
Since
\begin{align}
\Delta_H (V)&= H(V\cup \{0\})-H(V)= a_1H_1(V\cup \{0\})+a_2 H_2(V \cup \{0\}) - (a_1 H_1(V)+a_2 H_2(V) ) \nonumber\\
&= a_1\Delta_{H_1} (V)+a_2\Delta_{H_2} (V) \label{eq:deltalinear}
\end{align}
for any finite point set $V\subset \mathbb{R}^d$, we have $\Delta_H (\infty)=a_1\Delta_{H_1} (\infty)+a_2 \Delta_{H_2} (\infty)$ and thus, $S$ is a radius of stabilization for $H$.

By triangular inequality and Equation \eqref{eq:deltalinear}, we obtain
\begin{align}
|\Delta_{H} (V)|\leq |a_1\|\Delta_{H_1} (V)|+|a_2| |\Delta_{H_2} (V)|\leq |a_1|c_1+|a_2|c_2 , \nonumber
\end{align}
for any $V\subset \mathbb{R}^d$, so $H$ has bounded add one cost, and so the result follows.
\end{proof}

Let $D$ be a finite weakly connected digraph.
Then by Lemma \ref{lem:linearsum} it suffices to show that $H_{D}$ satisfies the conditions of Proposition \ref{prop:mainclt}
to prove the CLT results in Theorem \ref{thm:mainllnclt}.

Note that $kNND(V)$ depends only on the ordering of the pairwise distances of the points in $V$,
and thus, one can easily see that $H_D$ is translation-invariant and scale-invariant.

For any point $v$ in $V$ let $h_D(V,v)$ denote the number of copies of $D$ in $kNND(V)$ containing $v$.
By Lemma \ref{lem:indegreebound} we have $d_{in}(v)\leq k \kappa'(d)$ where $\kappa'(d)$ is a constant which only depends on the dimension $d$ ($\kappa'(d)$ is defined in the Section \ref{subsec:upperbound}).
We also have $d_{out}(v)=k$, and therefore, $v$ is adjacent to at most $K=k (\kappa'(d)+1)$ arcs in $kNND(V)$.
Let $s$ be the number of vertices in $D$ i.e., $s=\text{card}(V(D))$.
Then as $D$ is weakly connected,
it is easy to verify that
\begin{align}
0\leq h_D(V,v) \leq C, \label{eq:CKs}
\end{align}
for some constant $C:=C(K,s)$ which only depends on $K$ and $s$ (i.e., $C$ is independent of $V$).
Since each copy of $D$ in $kNND(V)$ has exactly $s$ vertices we get
\begin{align}
sH_D(V)= \sum_{v\in V} h_D(V,v).  \label{eq:Hh}
\end{align}
Thus, $|H_D(V)|\leq (C/s)\cdot \text{card}(V)$ and so, $H_D$ is linearly bounded.

Now suppose that the point 0 at the origin is not in $V$.
Inserting the point 0 to $V$ may cause addition or deletion of some arcs in the $kNND$.
We definitely add arcs $(0,v)$ where $v$ is a $k$NN of 0 in $V\cup \{ 0\}$.
Also some arcs of the form $(v,0)$ are inserted whenever 0 is a $k$NN to $v$.
Notice that if $u$ is not a $k$NN of $v$, it is still not a $k$NN to $v$ after the insertion of 0.
Clearly, an arc $(v,u)$ is deleted after the addition of 0 only if
$u$ is the $k$-th NN of $v$ in $V$ and $\|v-0\| < \|v-u\|$.
And in this case, 0 becomes a $k$NN of $v$.
Let $v_1, \dots , v_p$ be points adjacent to 0 in $kNND(V\cup \{0\})$
(i.e., $(0,v_i)$ or $(v_i,0)$ is an arc in the $kNND$ after the insertion of 0)  and
recall that $p\leq K$.
Then any deleted or created copy of $D$ in the $kNND$ contains at least one of the $v_i$'s.
Since there are at most $C$ copies of $D$ containing a given vertex,
we see that $| \Delta_{H_D}(V) | \leq pC \leq  KC$ and hence, $H_D$ has a bounded add one cost.

We finally show that $H_D$ has a radius of stabilization.
But, we first construct a setting essential for the proof.
We show that there exist cones $C_1, \dots , C_m$ with 0 as their common peak such that
$x,y \in C_i\backslash \{0\}$ implies $\|x-y \| < \max \{ \|x\|, \|y\| \}$ for all $1\leq i \leq m$,
and $\cup_{i=1}^m C_i = \mathbb{R}^d$ (Lemma S in Appendix of \cite{bickel:1983}).

Note that the union of the open balls $B_{1/2}(x)$ where $x\in \partial B_1(0)$ is an open covering of $\partial B_1(0)$.
Since $\partial B_1(0)$ is compact, there exists a finite subcover, say $B_{1/2} (x_1),\dots , B_{1/2}(x_m)$.
Let $B_i'=B_{1/2}(x_i)\cap \partial B_1(0)$ and $C_i=\{ ax: x\in B_i',\ a\geq 0\}$ for all $1\leq i \leq m$.
Recall that whenever $x,y\in B_i'$, by the triangular inequality, we have $\|x-y\| <\|x-x_i\|+ \|x_i-y\|\leq 1/2+1/2=1$.
Now if $x,y\in C_i\backslash \{0\}$, then $x=ax'$ and $y=by'$ for some $a,b > 0$ and $x',y'\in B_i'$.
Assume that $a\leq b$.
Then, we have
\begin{align*}
\|x-y\| =\|a(x'-y')-(b-a)y'\| \leq a\|x'-y'\|+(b-a)\|y'\| <a+(b-a)=b=\|y\|
\end{align*}
which shows $\|x-y \| < \max \{ \|x\|, \|y\| \}$.
One can also obtain this result by a geometric argument as follows:
By construction, the angle $\widehat{x 0 y}$ is less than $60^o$ and therefore,
the edge $[xy]$ is not the largest edge of the triangle $\triangle(x0y)$, that is,
$\|x-y \| < \max \{ \|x\|, \|y\| \}$.
Moreover, it is easy to see that the union of the cones is the whole space since $x/\|x\| \in B_i'$ for some $i$
and hence $x\in C_i$, for each nonzero $x\in \mathbb{R}^d$.

Let $C_i(t)=\{ ax: x\in B_i',\ 0\leq a\leq t\}$ for all positive integers $t$ and $1\leq i \leq m$.
Given the HPP $\mathcal{P}$ of intensity 1 on $\mathbb{R}^d$,
let the random variable $T$ be the minimum $t$ such that each cone $C_i(t)$ contains at least $k+1$ points from $\mathcal{P}$, and set $S_0=T+1$.
Then $S_0$ is a.s. finite,
since $C_i=\cup_{t=1}^{\infty} C_i(t)$ contains infinitely many points from $\mathcal{P}$ almost surely for each $i$.

\begin{lemma}
\label{lem:S0}
Let $v$ be a nonzero point in $V=(\mathcal{P} \cap B_{S_0}(0) )\cup A \cup \{0\}$ for some finite $A \subset (\mathbb{R}^d \backslash B_{S_0}(0))$.
In $V$, if $0\in kNN(v)$ or $v\in kNN(0)$, then $\|v\|<S_0$.
\end{lemma}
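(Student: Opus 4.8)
The plan is to prove the contrapositive: assuming $\|v\|\ge S_0$, I will show that $v$ is too far from $0$ to be one of its $k$ nearest neighbors, and simultaneously that $0$ is too far from $v$ to be one of \emph{its} $k$ nearest neighbors. The single gadget driving both conclusions is a cone packed with enough points close to the origin, so the proof of the two halves shares the same construction.

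First I would fix a cone $C_i$ with $v\in C_i$; such a cone exists because $\cup_{i=1}^m C_i=\mathbb{R}^d$. By the definition of $T$, the truncated cone $C_i(T)$ contains at least $k+1$ points of $\mathcal{P}$; denote them $w_1,\dots,w_{k+1}$. Since every point of $C_i(T)$ lies in the closed ball of radius $T$ and $T<S_0$, we have $\{w_1,\dots,w_{k+1}\}\subset \mathcal{P}\cap B_{S_0}(0)\subseteq V$, so these are genuine vertices of $kNND(V)$. Moreover $\|w_j\|\le T<S_0\le\|v\|$, so each $w_j$ is distinct from $v$, and almost surely none of them is the origin, so each $w_j\in V\setminus\{0,v\}$.

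Next I would compare distances. For the hypothesis $v\in kNN(0)$: each $w_j$ satisfies $\|w_j-0\|=\|w_j\|\le T<\|v\|=\|v-0\|$, so there are at least $k+1$ points of $V\setminus\{0\}$ strictly closer to $0$ than $v$ is, whence $v\notin kNN(0)$. For the hypothesis $0\in kNN(v)$: here I invoke the defining cone property. Since $v,w_j\in C_i\setminus\{0\}$ and $\|w_j\|<\|v\|$, the property $\|x-y\|<\max\{\|x\|,\|y\|\}$ gives $\|w_j-v\|<\max\{\|w_j\|,\|v\|\}=\|v\|=\|0-v\|$. Thus each $w_j$ is strictly closer to $v$ than $0$ is, producing at least $k+1$ points of $V\setminus\{v\}$ nearer to $v$ than $0$, so $0\notin kNN(v)$. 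Either way the assumed neighbor relation fails, so $\|v\|\ge S_0$ is impossible, proving the claim.

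The routine half is the $v\in kNN(0)$ direction, which is immediate once the $k+1$ nearby points are in hand and which in fact works with the points of \emph{any} cone. The step I expect to carry the real content is the $0\in kNN(v)$ direction: it is exactly here that the cone geometry is indispensable, and it forces the choice of the specific cone \emph{containing} $v$. Without the bound $\|w_j-v\|<\|v\|$ a far-away $v$ could still have $0$ among its $k$ nearest neighbors; the whole purpose of building cones satisfying $\|x-y\|<\max\{\|x\|,\|y\|\}$ is to guarantee that points sharing $v$'s cone and lying closer to the origin are also closer to $v$ than the origin is.
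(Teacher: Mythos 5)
Your proof is correct and follows essentially the same route as the paper: prove the contrapositive by locating $k+1$ points of $\mathcal{P}$ in the truncated cone containing $v$, then use their small norms to rule out $v\in kNN(0)$ and the cone property $\|x-y\|<\max\{\|x\|,\|y\|\}$ to rule out $0\in kNN(v)$. If anything, you are slightly more careful than the paper in handling the boundary case $\|v\|=S_0$ (via $\|w_j\|\le T<S_0\le\|v\|$) and in noting that the $w_j$ are distinct from $v$ and, almost surely, from the origin.
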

\begin{proof}
First note that $v$ is in $C_i$ for some $i$,
and by definition of $S_0$, there exist points $u_1, \dots , u_{k+1}$ in $V$ lying in $C_i(S_0)$.
We prove the claim by contrapositive.
Suppose that $\|v\|>S_0$.
Then, $\|u_j-0\|<S_0<\|v-0\|$ for every $1\leq j \leq k+1$, and hence $v$ is not a $k$NN of 0.
Moreover, by the construction of $C_i$ and since $\|u_j\|<S_0<\|v\|$ for each $1\leq j \leq k+1$,
we have $\|v-u_j\|< \max\{\|u_j\|,\|v\|\} =\|v\|=\|v-0\|$ for every $1\leq j \leq k+1$.
Therefore, 0 is not a $k$NN to $v$.
\end{proof}

We inductively construct $S_0,S_1,S_2, \dots $ as follows:
Let the random variable $T_j$ be the minimum $t$ such that each slice $C_i(t) \backslash C_i(3S_j)$ contains at leat $k$ points from $\mathcal{P}$ and set $S_{j+1}=3T_j$.
Note that by similar arguments for $S_0$, each $S_j$ is clearly a.s. finite.
Also notice that $S_{j+1}>9S_j$  and $T_j>3S_j$ for every $j$.
\begin{lemma}
\label{lem:Sj}
Let $u$ and $v$ be points in $V=(\mathcal{P} \cap B_{S_{j+1}}(0) )\cup A $ for some finite $A \subset (\mathbb{R}^d \backslash B_{S_{j+1}}(0))$ and suppose $\| u \|<S_j$.
If $u\in kNN(v)$ or $v\in kNN(u)$, then $\|v\|<S_{j+1}$.
\end{lemma}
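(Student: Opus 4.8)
The plan is to argue by contrapositive, in close parallel with the proof of Lemma \ref{lem:S0}, but now using as witnesses the $k$ points produced by the definition of $T_j$. First I would dispose of the trivial case: if $v\in \mathcal{P}\cap B_{S_{j+1}}(0)$ then $\|v\|<S_{j+1}$ and there is nothing to prove, so I may assume $v\in A$, whence $\|v\|\ge S_{j+1}$. Writing $b=\|v\|$, the goal becomes to show that $\|v\|\ge S_{j+1}$ forces both $u\notin kNN(v)$ and $v\notin kNN(u)$. Since $v\ne 0$, it lies in some cone $C_i$, and by the defining property of $T_j$ the slice $C_i(T_j)\setminus C_i(3S_j)$ contains at least $k$ points of $\mathcal{P}$; I fix $k$ of them, say $u_1,\dots,u_k$, so that $3S_j<\|u_l\|\le T_j$ for each $l$. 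Each $u_l$ lies in $\mathcal{P}\cap B_{S_{j+1}}(0)\subseteq V$ because $\|u_l\|\le T_j<3T_j=S_{j+1}$, and each $u_l$ is distinct from $u$ and from $v$ since $\|u\|<S_j<\|u_l\|\le T_j<S_{j+1}\le\|v\|$. These $k$ points serve as the witnesses in both halves of the argument.

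To see that $v\notin kNN(u)$, I would show that all $k$ witnesses are strictly closer to $u$ than $v$ is. On one hand, the triangle inequality gives $\|u-u_l\|\le\|u\|+\|u_l\|<S_j+T_j$. On the other hand, $\|u-v\|\ge\|v\|-\|u\|>S_{j+1}-S_j=3T_j-S_j$. Since $T_j>3S_j$ implies $S_j+T_j<3T_j-S_j$, I obtain $\|u-u_l\|<\|u-v\|$ for every $l$, so $u$ has at least $k$ neighbours closer than $v$, and hence $v$ is not among its $k$ nearest neighbours.

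The harder half is $u\notin kNN(v)$, and this is where the exact value $S_{j+1}=3T_j$ (equivalently $S_{j+1}>9S_j$) is needed. Here the plain cone property $\|v-u_l\|<\max\{\|v\|,\|u_l\|\}=b$ is too weak, since $\|v-u\|$ may itself be close to $b$; I would instead use the quantitative form of the cone construction, namely that any two nonzero points of $C_i$ subtend an angle less than $60^{\circ}$ at the origin, so that $\langle v,u_l\rangle>\tfrac12\,b\,\|u_l\|$. Writing $a_l=\|u_l\|$, this yields
\[
\|v-u_l\|^2=b^2+a_l^2-2\langle v,u_l\rangle<b^2+a_l^2-b\,a_l=b^2-a_l(b-a_l).
\]
Because $a_l\le T_j$ and $b\ge S_{j+1}=3T_j$ give $a_l\le b/3$ and hence $b-a_l\ge 2b/3$, and because $a_l>3S_j$, I get $a_l(b-a_l)>3S_j\cdot\tfrac{2b}{3}=2S_j\,b$, so $\|v-u_l\|^2<b^2-2S_j\,b$. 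Comparing this with $\|v-u\|^2\ge(b-\|u\|)^2>b^2-2S_j\,b$ (using $\|u\|<S_j$), I conclude $\|v-u_l\|<\|v-u\|$ for all $k$ witnesses, whence $u$ has $k$ neighbours nearer than $v$ and $u\notin kNN(v)$.

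I expect the genuine difficulty to be concentrated entirely in the estimate of the previous paragraph: the naive separate bounds $a_l\le T_j$ and $\|u\|<S_j$ lose too much and leave a gap for $b$ just above $3T_j$, so the argument must keep the factor $a_l(b-a_l)$ intact and exploit that $b\ge 3a_l$. Once this quantitative closeness estimate is in place, the remaining points are routine: the slice points exist and are a.s.\ distinct with well-defined pairwise distances because ties occur with probability zero, and the borderline cases $\|v\|=S_{j+1}$ or $a_l=T_j$ either fall under the trivial case or are absorbed by the strict inequality $a_l>3S_j$ coming from the definition of the slice.
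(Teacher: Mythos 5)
Your proposal is correct and follows essentially the same route as the paper's proof: the contrapositive, the $k$ witness points in the slice $C_i(T_j)\setminus C_i(3S_j)$, the crude triangle-inequality bounds for the direction $v\notin kNN(u)$, and the sub-$60^{\circ}$ cone estimate $\|v-u_l\|^2<b^2-a_l(b-a_l)$ combined with $a_l(b-a_l)>2S_jb$ for the direction $u\notin kNN(v)$. The only (harmless) differences are cosmetic: you phrase the angle bound via the inner product rather than the cosine theorem, and you treat the boundary case $\|v\|=S_{j+1}$ slightly more carefully than the paper does.
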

\begin{proof}
The proof is by contrapositive.
Assume $\|v\|>S_{j+1}$.

As $v$ is in $C_i$ for some $i$ and by construction of $S_{j+1}$ there exist points $v_1,\dots ,v_{k}$ in $C_i(T_j) \backslash C_i(3S_j)$ from $V$.
Triangular inequality implies
\begin{align}
\|u-v_s\| \leq \|u\|+\|v_s\| <S_j+T_j \label{eq:distuvs}
\end{align}
for every $1\leq s\leq k$, and
\begin{align}
\|u-v\| \geq \|v\|-\|u\| > \|v\|-S_j > 3T_j-S_j. \label{eq:distuv1}
\end{align}
Since $T_j>3S_j$, we have $3T_j-S_j> S_j+T_j $, and thus, inequalities in \eqref{eq:distuvs} and \eqref{eq:distuv1} yield
$\|u-v_s\| < \|u-v\|$ for each $1\leq s \leq k$.
Therefore, $v$ is not a $k$NN of $u$.

By \eqref{eq:distuv1} we have
\begin{align}
\|u-v\|^2 > (\|v\|-S_j)^2=\|v\|^2-2S_j\|v\|+S_j^2>\|v\|^2-2S_j\|v\| . \label{eq:distuv2}
\end{align}
Notice that $\|v_s\|>3S_j$ and $\|v\|>S_{j+1}=3T_j>3\|v_s\|$ which implies $\|v\|-\|v_s\| > 2\|v\|/3$ .
Then we get
\begin{align}
\|v_s\| (\|v\|-\|v_s\|)>3S_j \frac{2\|v\|}{3}=2S_j\|v\|. \label{eq:distvsv}
\end{align}
Inequalities in \eqref{eq:distuv2} and \eqref{eq:distvsv} yield
\begin{align}
\|v-u\|^2 >\|v\|^2-\|v_s\| (\|v\|-\|v_s\|)=\|v\|^2-\|v_s\| \|v\|+\|v_s\|^2, \label{eq:distuv3}
\end{align}
for each $1\leq s \leq k$.
Moreover, the construction of $C_i$ implies $\widehat{v0v_s}<60^o$, and hence by the cosine theorem in triangles we have
\begin{align}
\|v-v_s\|^2 < \|v\|^2+\|v_s\|^2-\|v\| \|v_s\|, \label{eq:cos}
\end{align}
for each $1\leq s \leq k$.
Then, by the inequalities in \eqref{eq:distuv3} and \eqref{eq:cos} we obtain $\|v-v_s\|< \|v-u\|$ for all $1\leq s \leq k$.
Thus, $u$ is not a $k$NN of $v$.
\end{proof}

Now let $l$ be the length of a longest path in the underlying graph of $D$.
We claim that $S_l$ is a radius of stabilization for $H_D$.
Let $V=(\mathcal{P} \cap B_{S_l}(0) )\cup A $ for some finite $A \subset (\mathbb{R}^d \backslash B_{S_l}(0))$.
Recall that, after the addition of 0,
every new or disappeared copy of $D$ contains at least one of the vertices adjacent to 0 in $kNND(V\cup \{0\})$.
Let $F$ be the set of these vertices.
Suppose a copy of $D$ contains $u\in F$ and $v$ be another vertex of this copy.
Since $D$ is weakly connected the exists a $u-v$ path in the underlying graph of $kNND(V)$,
say $u=u_0,u_1, \dots , u_s=v$.
Note that we have $s\leq l$ and $u_i\in kNN(u_{i-1})$ or $u_{i-1}\in kNN(u_i)$ for every $1\leq i \leq s$.
Lemma \ref{lem:S0} implies $\|u_1\|<S_0$.
Then, inductively Lemma \ref{lem:Sj} gives $\|u_i\| <S_i$ for each $1\leq i \leq s$ and hence,
$\|u_s\|=\|v\|<S_s\leq S_l$, that is $\|v\|<S_l$.
Therefore, any change in the $kNND$ caused by the insertion of the origin occurs in the ball $B_{S_l}(0)$ and so,
the set $A$ which is outside of this ball does not effect the add one cost.
In other words, $S_l$ is a radius of stabilization for $H_D$.

\subsection{LLN Results}
We next present c.m.c.c. results in the main theorem applying Theorem 3.2 in \cite{penrose:2002}.
Their result is on the functionals of the form $\sum_{v\in V} h(V,v)$ where
$h(V,v)$ is a functional defined for every finite point set $V\subset \mathbb{R}^d$ and $v\in V$.
Although their result is for marked point sets, the theorem is still obviously true for unmarked sets as
the unmarked point set can be viewed as a marked point set with a single mark.

The functional $h$ is said to be \emph{translation-invariant} if $h(V,v)=h(V+y,v+y)$ for every finite $V\subset \mathbb{R}^d$, $v\in V$ and $y\in \mathbb{R}^d$.
$h$ is called \emph{scale-invariant} if $h(V,v)=h(aV,av)$ for every $V\subset \mathbb{R}^d$, $v\in V$ and $a\in \mathbb{R} \backslash \{0\}$.
We say that $h$ is \emph{uniformly bounded}, if there exists a constant $c$ such that $|h(V,v)|\leq c$ for all $V\subset \mathbb{R}^d$ and $v\in V$.
The functional $h$ is called \emph{strongly stabilizing} if there exist a.s. finite random variables $S$ (a \emph{radius of stabilization} for $h$) and $h_{\infty}$ (the \emph{limit} of $h$) such that with probability 1,
\begin{align*}
h((\mathcal{P}\cap B_S(0)) \cup \{0\} \cup A, 0)=h_{\infty},
\end{align*}
for every finite set $A\subset \mathbb{R}^d \backslash B_S(0)$.

By Theorem 3.2 in \cite{penrose:2002} one can obtain the following proposition.
\begin{prop}
\label{prop:llnh}
Let $h$ be a functional defined on pairs $(V,v)$ consisting of a point set $V$ in $\mathbb{R}^d$ and an element $v$ of $V$, and $H(V)=\sum_{v\in V} h(V,v)$ for every finite $V\subset \mathbb{R}^d$.
If $H$ has a bounded add one cost, $h$ is translation invariant, scale invariant, uniformly bounded and strongly stabilizing with limit $h_{\infty}$, then as $n\rightarrow \infty$ we have
\begin{align*}
n^{-1} H(\mathcal{U}_n) \xrightarrow{c.m.c.c.} \E(h_{\infty})
\text{ and }
n^{-1} H(\mathcal{P}_n) \xrightarrow{c.m.c.c.} \E(h_{\infty}).
\end{align*}
\end{prop}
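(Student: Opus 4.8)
The plan is to derive this proposition directly from Theorem 3.2 of \cite{penrose:2002} by checking that the hypotheses listed here are exactly a specialization of the hypotheses of that theorem, and then by identifying the limiting constant. Writing $\xi(v;V):=h(V,v)$ for $v\in V$, the functional $H(V)=\sum_{v\in V}\xi(v;V)$ is precisely of the form treated there. First I would match the invariance and stabilization assumptions: translation invariance and scale invariance of $h$ carry over verbatim, and the hypothesis that $h$ is strongly stabilizing with limit $h_{\infty}$ is exactly the stabilization condition required by the objective-method machinery, with $S$ a radius of stabilization.

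Next I would dispatch the moment and integrability conditions. The Penrose--Yukich result requires a bound on a moment of $\xi$, used to guarantee uniform integrability (hence convergence of means) and to control tails. Here the hypothesis that $h$ is uniformly bounded, $|h(V,v)|\le c$, makes every such moment condition trivial: all moments of $h$ and of its stabilization limit $h_{\infty}$ are finite and bounded by powers of $c$. Thus the only non-automatic ingredient is the one feeding the complete-convergence strengthening, and this is supplied by the assumption that $H$ has bounded add one cost: since adjoining one point changes $H$ by at most $c_2$, the Azuma--Hoeffding inequality applied to the Doob martingale of $H(\mathcal{U}_n)$, and to the analogous filtration for $H(\mathcal{P}_n)$, yields $P(|H-\E H|>\epsilon n)\le 2\exp(-c\,\epsilon^2 n)$, whose sum over $n$ is finite for every $\epsilon>0$. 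Together with convergence of the means this gives complete convergence to the same limit, so both pieces of $\xrightarrow{c.m.c.c.}$ hold.

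The substantive step is to evaluate the limiting constant and verify that it equals $\E(h_{\infty})$ and is independent of $B_0$. The general Penrose--Yukich limit is an integral $\int_{B_0}\E[\xi(0;\mathcal{P}_{\lambda(x)})]\,f(x)\,dx$ of the expected stabilization value against the uniform density $f\equiv m(B_0)^{-1}$ on $B_0$, where $\mathcal{P}_{\lambda(x)}$ is the Poisson process whose local intensity reflects $f$. Here the decisive point is scale invariance: a Poisson process of intensity $\lambda$ is a scaled copy of the unit-intensity process $\mathcal{P}$, and since $h$ is scale invariant its value, and hence the law of its stabilization limit, is unchanged under this scaling. Therefore $\E[\xi(0;\mathcal{P}_{\lambda(x)})]=\E(h_{\infty})$ for every $x$, the integrand is the constant $\E(h_{\infty})$, and $\int_{B_0}f(x)\,dx=1$, so the limit collapses to $\E(h_{\infty})$ with no dependence on $B_0$.

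The main obstacle I anticipate is not verifying the (largely given) hypotheses but the careful bookkeeping in this last step: one must confirm that the stabilization limit $h_{\infty}$ produced by our construction, namely the value of $h$ at the origin inside a unit-intensity Poisson process, is exactly the quantity whose expectation appears in the Penrose--Yukich limit, and that scale invariance genuinely removes the intensity dependence so that the local density $f$ integrates out to $1$. Once the notational translation between $(V,v)$ and $(x;\mathcal{X})$, and between the two intensity normalizations, is pinned down, the binomial and the Poisson statements follow simultaneously from the single application of Theorem 3.2 in \cite{penrose:2002}.
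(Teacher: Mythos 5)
Your proposal is correct and follows essentially the same route as the paper, which obtains this proposition directly as a specialization of Theorem 3.2 in \cite{penrose:2002} (noting only that the marked-point-set version applies to unmarked sets via a single mark). Your additional bookkeeping --- uniform boundedness trivializing the moment conditions, scale invariance collapsing the limiting integral to $\E(h_{\infty})$ independently of $B_0$, and the concentration argument for complete convergence --- simply fills in details that the paper leaves implicit in the citation.
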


We now prove the c.m.c.c. results in the main theorem using Proposition \ref{prop:llnh}.
Since $X_n\xrightarrow{c.m.c.c.} x$ and $Y_n \xrightarrow{c.m.c.c.} y$ as $n\rightarrow \infty$ implies
$aX_n+bY_n \xrightarrow{c.m.c.c.} ax+by$ as $n\rightarrow \infty$ for any real numbers $a$ and $b$,
it suffices to prove c.m.c.c. results in the main theorem only for $H_D$.
Let $D$ be a fixed weakly connected digraph with $s$ vertices.
Let $l$ be the length of a maximal path in the underlying graph of $D$.
Recall that by \eqref{eq:Hh}, we have $H_D(V)=\sum_{v\in V} h_D(V,v)/s$ for any point set $V$.
Now set $h=h_D/s$.
We show that $h$ and $H=H_D$ satisfy the conditions of Proposition \ref{prop:llnh}.
We have already shown that $H_D$ has a bounded add one cost in the previous subsection.
Clearly, $h_D$ is both translation-invariant and scale-invariant, and so does $h$.
Recall that by \eqref{eq:CKs} we have $0\leq h_D(V,v) \leq C(K,s)$ for every point set $V$ and element $v$ of $V$, and hence, $h=h_D/s$ is uniformly bounded.
Finally, by the same arguments in the proof of strong stabilization of $H_D$, it is easy to check that
(using Lemma \ref{lem:S0} and Lemma \ref{lem:Sj}) $S_l$ is a radius of stabilization for $h$.
Also, as $h_D$ is uniformly bounded, $h_{\infty}$ (the limit of $h$) is a.s. finite and therefore, $h$ is strongly stabilizing.
So, the result follows.

\begin{remark}{\bf $k$NN graphs.} \label{rem:kNNgraphversion}
A widely studied object in statistics and probability is $k$NN graphs
(see, e.g., \cite{friedman:1983}, \cite{avram:1993}, \cite{penrose:2001} and \cite{wade2007}).
$k$NN graph of a point set is obtained by putting an edge between two points whenever one of them is a $k$NN of the other one.
In other words, $k$NN graph is the underlying graph of the $k$NN digraph.
The results we obtain for $k$NN digraphs are also valid for $k$NN graphs.
Let $G_1, \dots ,G_m$ be finite connected graphs.
For each $1\leq i \leq m$, let $H_{G_i}(V)$ denote the number of subgraphs of the $k$NN graph of $V$ which are isomorphic to $G_i$.
Then, any linear combination of $H_{G_i}$'s satisfies all the asymptotic properties of $H$ in Theorem \ref{thm:mainllnclt}.
\end{remark}

\begin{remark}{\bf Marked point processes.} \label{rem:markedppversion}
Let $(\mathcal{K}, \mathcal{F}, P)$ be a probability space.
In a marked point process with mark space $(\mathcal{K}, \mathcal{F}, P)$,
independently of other points and marks, each point is assigned a mark taking value in $\mathcal{K}$ under the distribution $P$.
In other words, marks of the points are i.i.d. with distribution $P$.
A marked graph (resp. digraph) is a graph (resp. digraph) in which each vertex has a mark.
Two marked graphs (resp. marked digraphs) are isomorphic if there exists an isomorphism between the graphs (resp. digraphs) such that the corresponding vertices under the isomorphism have the same mark.
Suppose $\mathcal{U}_{n}$ and $\mathcal{P}_{n}$ are marked binomial point processes and HPP, respectively.
Then, $D_i$'s in Theorem \ref{thm:mainllnclt} and $G_i$'s in Remark \ref{rem:kNNgraphversion} can be taken to be marked digraphs and marked graphs, respectively, and all the asymptotic results still hold.

For example, fix a positive integer $m$.
Let $\mathcal{K}=\{1,\dots ,m\}$, $\mathcal{F}=2^{\mathcal{K}}$ and $P(\text{mark is } i)=p_i>0$ for every $1\leq i \leq m$.
For every $1\leq i, j \leq m$, let $D_{ij}$ be the marked digraph with $V(D_{ij})=\{u,v\}$, $A(D_{ij})=\{(u,v)\}$ and marks of $u$ and $v$ are $i$ and $j$, respectively.
Let $N_{ij}(V)$ denote the number of marked subdigraphs of $kNND(V)$ isomorphic to $D_{ij}$ for any marked point set $V$.
In other words, $N_{ij}$ counts the arcs whose tail has mark $i$ and head has mark $j$ in the $k$NN digraph of the given marked point set.
Let $\mathcal{U}_{n}$ and $\mathcal{P}_{n}$ be as defined before with mark space $(\mathcal{K}, \mathcal{F}, P)$.
Then, $H$ in Theorem \ref{thm:mainllnclt} can be replaced with not only $N_{ij}$ but also any linear combination of $N_{ij}$'s.
When $k=1$ $N_{ij}$ is actually,
the number of $NN$ pairs whose base point is of class $i$ and $NN$ point is of class $j$,
and extensively studied for data from settings different than HPP and binomial process,
e.g., from random labelling \cite{dixon:2002} or complete spatial randomness \cite{ceyhan:cell2009}.
\end{remark}

As the proofs of Theorem \ref{thm:mainllnclt} and the statements in Remarks \ref{rem:kNNgraphversion} and \ref{rem:markedppversion} are very similar, we only gave the proof of the main theorem.

In sections 4-6, we consider special cases of $H_D(V)$ and apply our main theorem on them.

\section{Number of $j$-indegree Vertices}
\label{sec:jindegree}
In this section, we study $Q_j^{(k)}$'s.
We first identify the degenerate ones (i.e., those satisfy $Q_j^{(k)}(V)=0$ for any point set $V$).
\subsection{Upper bound on the indegree}
\label{subsec:upperbound}
Let $V$ be a finite subset of $\mathbb{R}^d$.
We show that indegrees in $kNND(V)$ are bounded above by a constant which only depends on $d$ and $k$.
\begin{definition}
The \emph{kissing number} in $\mathbb{R}^d$ is the maximum number of equal nonoverlapping spheres in $\mathbb{R}^d$ that can touch a fixed sphere of the same size and denoted as $\kappa(d)$.
\end{definition}
Currently, the exact value of $\kappa(d)$ is known only for $d=1,2,3,4,8,24$ and $\kappa(1)=2, \kappa(2)=6, \kappa(3)=12,
\kappa(4)=24, 40\leq \kappa(5)\leq 44, 72\leq \kappa(6)\leq 78, 126\leq \kappa(7)\leq 134, \kappa(8)=240, \kappa(24)=196560$ (\cite{musin:2008}, \cite{conway:1988}).
For general $d$, an asymptotic upper bound for $\kappa(d)$ is $2^{0.401d(1+o(1))}$ (\cite{Kabatiansky:1978})
and an asymptotic lower bound is $2^{0.2075d(1+o(1))}$ (\cite{wyner:1965}).
For more information about the kissing number problem see \cite{conway:1988}.

For any distinct points $x,y,z\in \mathbb{R}^d$, let $\triangle(xyz)$ denote the triangle with vertices $x,y$ and $z$.
Let $\widehat{xyz}$ denote the angle belonging to the vertex $y$ in $\triangle(xyz)$ and $[xy]$ denote the line segment (i.e., edge) with end points $x$ and $y$.

The kissing number problem can be stated in another way:
the maximum value of $m$ such that there exist $m$ points $a_1,a_2, \dots , a_m$ lying on the unit sphere in $\mathbb{R}^d$
such that $\|a_i-a_j\|\geq 1$ for all $1\leq i\neq j \leq m$ or equivalently,
$\widehat{a_i0a_j}\geq 60^o$ for all $1\leq i\neq j \leq m$ where $0$ is the  origin (also the center of the unit sphere).
To see that this is exactly the same problem, consider any arrangement of non-overlapping spheres touching a central sphere.
By a suitable translation and scaling, we may assume that the central sphere is $B_{1/2}(0)$ and the other balls are of radius $1/2$.
Clearly the centers of the touching balls are on the unit sphere and the distance between any two of them is $\geq 1$,
since they do not overlap.
Additionally, for any two points $a$ and $b$ lying on the unit sphere, we have $\|a-b\|\geq 1$ if and only if $\widehat{a0b}\geq 60^o$,
since $\triangle(a0b)$ is an isosceles triangle with two edges of length 1.

Let $\kappa':=\kappa' (d)$ be the maximum value of $m$ such that there exist $m$ points $a_1,a_2, \dots , a_m$ lying on the unit sphere in $\mathbb{R}^d$ such that $\|a_i-a_j\| > 1$ for all $1\leq i\neq j \leq m$.
Clearly $\kappa'(d)\leq \kappa(d)$ and $\kappa '(1)=2$.
Note that if in all configurations with $\kappa(d)$ points satisfying the kissing number problem there exist two points of distance 1,
then we have strict inequality $\kappa'(d)<\kappa(d)$.
For example, $\kappa(2)=6$ and it is easy to see that the only configuration with six points is the set of vertices of a regular hexagon on the unit circle.
On the other hand, vertices of a regular pentagon on the unit circle is an example with five points for $\kappa'(2)$ and hence we have $\kappa'(2)=5$.
We also have $\kappa'(3)=12$ since the vertices of a regular icosahedron on the unit ball is an example with twelve points for the kissing number problem with no pair of vertices of distance 1.
For small values of $d$, we assert that $\kappa'(d)\in \{\kappa(d)-1,\kappa(d)\}$.
For now, this assertion remains as a conjecture.
The reason why we consider $\kappa'(d)$ is explained in the following lemma.

\begin{lemma}
\label{lem:indegreebound}
Let $v$ be a vertex of $kNND(V)$ where $V$ is a random point set in $\mathbb{R}^d$ obtained by $\mathcal{P}_{n}$ or  $\mathcal{U}_{n}$.
Then, we have $d_{in} (v)\leq  \kappa'  k$ a.s..
\end{lemma}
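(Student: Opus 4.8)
The plan is to reduce the bound to a packing estimate on the sphere of directions seen from $v$, exploiting that two points sharing $v$ among their $k$ nearest neighbors cannot both lie too ``close together'' in angle at $v$. Write $U=\{u\in V:\ v\in kNN(u)\}$, so that $d_{in}(v)=\mathrm{card}(U)$, and for $u\in U$ let $a_u=(u-v)/\|u-v\|$ be its direction on the unit sphere centered at $v$. The central observation is the following clustering bound: if $w,w_1,\dots,w_m\in U$ satisfy $\|w_i-v\|<\|w-v\|$ and $\widehat{w_i v w}\le 60^o$ for every $i$, then $m\le k-1$. To prove it I would apply the law of cosines in each triangle $\triangle(w_i v w)$; since the angle at $v$ is at most $60^o$ we have $\cos\widehat{w_i v w}\ge 1/2$, so with $r=\|w-v\|$ and $r_i=\|w_i-v\|<r$,
\[
\|w-w_i\|^2 = r^2+r_i^2-2rr_i\cos\widehat{w_i v w}\le r^2-r_i(r-r_i)<r^2 .
\]
Thus each $w_i$ is strictly closer to $w$ than $v$ is; if there were $m\ge k$ such points, then $v$ would be beaten by $k$ points and so could not belong to $kNN(w)$, a contradiction. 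Consequently any family of points of $U$ whose directions are pairwise within $60^o$ has at most $k$ elements.

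Next I would convert this into $\mathrm{card}(U)\le \kappa' k$ by a greedy peeling of $U$ that produces pairwise far-apart centers. Repeatedly select $w^{(r)}$ to be the point of the remaining part of $U$ that is farthest from $v$, and remove the group $G_r=\{u\ \text{remaining}:\ \widehat{u v w^{(r)}}\le 60^o\}$. Because $w^{(r)}$ is farthest among the remaining points, every element of $G_r$ is closer to $v$ than $w^{(r)}$, so the clustering bound (applied with $w=w^{(r)}$) gives $\mathrm{card}(G_r)\le k$. Since the $G_r$ partition $U$ into, say, $t$ groups, we get $\mathrm{card}(U)\le tk$, and it remains to bound $t$. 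For $s>r$ the center $w^{(s)}$ survived the removal of $G_r$, hence $\widehat{w^{(s)} v w^{(r)}}>60^o$; thus the directions $a_{w^{(1)}},\dots,a_{w^{(t)}}$ are pairwise at angular distance exceeding $60^o$, equivalently $\|a_{w^{(r)}}-a_{w^{(s)}}\|>1$ for $r\neq s$. By the very definition of $\kappa'=\kappa'(d)$ this forces $t\le \kappa'$, and therefore $d_{in}(v)=\mathrm{card}(U)\le \kappa' k$. The ``almost surely'' enters only to guarantee that the pairwise distances are distinct, so that $kNN$ is well defined and the strict ordering $\|w_i-v\|<\|w-v\|$ used above is available.

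The main point to get right — and the reason $\kappa'$ rather than some cruder constant appears — is that one must \emph{not} cover the direction sphere by a fixed family of $60^o$-caps: such a cover would need strictly more than $\kappa'$ caps in general (for instance six arcs are needed on the circle while $\kappa'(2)=5$). Instead the centers $w^{(r)}$ are produced adaptively and are automatically separated by more than $60^o$, which is exactly the configuration counted by $\kappa'$. The two invariants that make the peeling valid are that each center is chosen farthest-first (so the clustering bound applies to its entire group) and that a removed group absorbs every not-yet-assigned point within $60^o$ (so later centers are genuinely far from earlier ones); keeping these two invariants aligned is the only delicate bookkeeping in the argument.
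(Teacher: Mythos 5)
Your proof is correct and follows essentially the same strategy as the paper's: a farthest-first greedy peeling of the in-neighbours of $v$ into at most $\kappa'$ groups of at most $k$ points each, where the group-size bound comes from the fact that every other group member is strictly closer to the group's centre than $v$ is (so $v\in kNN(\text{centre})$ forces at most $k-1$ of them), and the number of groups is at most $\kappa'$ because the surviving centres subtend pairwise angles exceeding $60^o$ at $v$. The only cosmetic difference is that you carve out each group as a $60^o$ angular cap and justify the size bound via the law of cosines, whereas the paper deletes the ball $B_{r_i}(u_i)$ with $v$ on its boundary and deduces the angle separation afterwards; both routes deliver the same two invariants and the same count $\kappa' k$.
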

\begin{proof}
By a convenient translation, we may suppose that $v=0$.
Let $m=d_{in} (0)$ in $kNND(V)$.
We first prove the claim for $k=1$.
If $m=0$, the claim follows trivially.
Otherwise, there exist $v_1,\dots , v_m\in V$ such that $1NN(v_i)=0$ for all $1\leq i \leq m $.
Therefore, for all $1\leq i\neq j \leq m$ we have $\| v_i -v_j \|$ is greater than both of $\| v_i \|$ and $\| v_j \|$.
In other words, $[v_i v_j]$ is the largest edge of the triangle $\triangle(v_i 0 v_j)$.
Thus, the angle $\widehat{v_i 0 v_j}$ is greater than $60^o$.

Now, for each $i$ let $a_i$ be the point on the ray $\overrightarrow{0 v_i}$ such that $\|v_i\|=1$.
Note that $\widehat{a_i 0 a_j}=\widehat{v_i 0 v_j}> 60^o$ and hence $[a_i a_j]$ is the largest edge of the isosceles triangle $\triangle(a_i 0 a_j)$.
Therefore, we obtain that $\|a_i - a_j\| >1$ for all $i\neq j$,
and hence $m\leq \kappa' $ by the definition of $\kappa' $.

For general $k$, we follow the idea for $d=2$ introduced in \cite{cuzick:1990}.
Let $v_1, \dots , v_m\in V$ such that $0\in kNN(v_i)$ for each $1\leq i \leq m$.
Let $u_1$ be the $v_i$ with largest $\|v_i\|$ and $r_1$ be $\|u_1\|$.
Delete all $v_i$'s lying in the ball $B_{r_1}(u_1)$.
Of the remaining $v_i$'s let $u_2$ be the one with largest $\|v_i\|$, and $r_2$ be $\|u_2\|$.
Delete all $v_i$'s lying in the ball $B_{r_2}(u_2)$, and continue this process until all $v_i$'s are deleted.
Let $t$ be the number of steps in this process.
By the nature of this procedure, it is clear that $r_1>r_2> \cdots >r_t$,
and for $i<j$ we have $u_j \notin B_{r_i}(u_i)$,
Thus, we obtain $\|u_i-u_j\|> r_i=\|u_i\|>\|u_j\|$ for every $i>j$
which implies that 0 is the NN of every $u_i$ for the set of points $\{u_1, \dots , u_t, 0\}$.
Therefore, we obtain $t\leq \kappa' $.
Moreover, note that we delete at most $k$ of the $v_i$'s at each step.
Because, 0 is a $k$NN to each $u_i$ and there can be at most $k-1$ points in the ball $B_{r_i}(u_i)$ other than $u_i$ itself, since 0 lies on the boundary of the ball.
Finally, we have the desired result $m\leq \kappa'  k$.
\end{proof}

\begin{lemma}
\label{lem:qjtreshold}
For every $n\geq k(\kappa' +1)+2$, we have
$P(Q^{(k)}_j(\mathcal{U}_{n})>0)$ and $P(Q^{(k)}_j(\mathcal{P}_{n})>0)$ are positive if and only if $j \leq \kappa' k$.
\end{lemma}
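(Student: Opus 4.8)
The plan is to prove the two directions separately, and the \textbf{only if} direction is immediate. By Lemma \ref{lem:indegreebound}, every vertex of $kNND(V)$ satisfies $d_{in}(v)\le \kappa' k$ almost surely when $V$ is $\mathcal{U}_n$ or $\mathcal{P}_n$. Hence if $j>\kappa' k$ then $Q_j^{(k)}(V)=0$ almost surely, so both probabilities vanish. The substance of the lemma is therefore the \textbf{if} direction: for each $0\le j\le \kappa' k$, I would exhibit an \emph{open} set of point configurations on which some vertex has indegree exactly $j$, and then argue that this open set carries positive probability under both models.

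For the construction I would fix a vertex $v^*$ (placed at the origin) that will realize indegree $j$. Using the definition of $\kappa'$, choose points $a_1,\dots,a_{\kappa'}$ on the unit sphere with $\|a_i-a_{i'}\|>1$ for $i\ne i'$, and write $j=\sum_{i=1}^{\kappa'} n_i$ with each $0\le n_i\le k$ (possible since $j\le \kappa' k$). In direction $a_i$, place $n_i$ points in a tight cluster near $r a_i$ for a common radius $r$. As in the proof of Lemma \ref{lem:indegreebound}, the separation $\|a_i-a_{i'}\|>1$ forces, for a point $v$ in the $i$-th cluster, its nearest neighbours to be the other $n_i-1<k$ cluster points followed by $v^*$ (at distance $\approx r$), so $v^*\in kNN(v)$; this contributes exactly $j$ arcs into $v^*$. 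The remaining $n-1-j$ points are placed in a single tight cluster \emph{far} from everything else; provided this far cluster has at least $k+1$ points, each of its points finds its $k$ nearest neighbours inside the cluster and hence does not have $v^*$ as a $k$NN. Consequently $d_{in}(v^*)=j$ exactly. Feasibility requires $n-1-j\ge k+1$, i.e. $n\ge j+k+2$, which is guaranteed for every admissible $j$ by the hypothesis $n\ge k(\kappa'+1)+2$; this is precisely where the threshold enters, the extreme case $j=\kappa' k$ saturating it.

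Since small perturbations of the points preserve the ordering of all relevant pairwise distances, the configurations above form a set with nonempty interior, and after rescaling (the functionals are scale- and translation-invariant) it can be placed inside $B_0$, which contains an open ball because $m(B_0)>0$. For $\mathcal{U}_n$ the probability that the $n$ i.i.d. uniform points land in the prescribed positive-measure regions is positive. For $\mathcal{P}_n$ the counts in the disjoint regions are independent Poisson variables, so the event of having exactly the prescribed number of points in each region (and none elsewhere in $B_0$) has positive probability. In either case $P(Q_j^{(k)}>0)>0$, completing the proof.

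The step I expect to be most delicate is the geometric verification that the construction produces indegree \emph{exactly} $j$: one must simultaneously ensure that each of the $j$ near points retains $v^*$ among its $k$ nearest neighbours, controlling intra-cluster against inter-cluster distances through the $\|a_i-a_{i'}\|>1$ separation as in Lemma \ref{lem:indegreebound}, while checking that no far point has $v^*$ as a $k$NN. The case $j<k$ needs slightly more care, since then $v^*$ must complete its own $k$ nearest neighbours using $k-j$ points drawn from the far cluster, and one must verify that these far points still do not reciprocate.
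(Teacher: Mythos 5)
Your proposal is correct and follows essentially the same route as the paper: the only-if direction via the indegree bound of Lemma \ref{lem:indegreebound}, and the if direction via the same configuration of $\kappa'$ small clusters of sizes $n_i\le k$ placed at well-separated directions $a_i$ around a central point, with the remaining $n-1-j\ge k+1$ points in a distant cluster so that none of them takes the central point as a $k$NN. The geometric checks you flag as delicate are exactly the inequalities the paper verifies, and your positive-probability argument (open set of configurations, disjoint positive-measure regions under both the binomial and Poisson models) is if anything slightly more explicit than the paper's.
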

\begin{proof}
As an immediate consequence of Lemma \ref{lem:indegreebound}, we see that $Q^{(k)}_j$ is identically zero for every $j>\kappa' k$.

For the values of $j$ with $j\leq \kappa' k$, recall that there exist points $a_1,a_2, \dots , a_{\kappa' }$ lying on the boundary of the unit sphere
in $\mathbb{R}^d$ such that $\|a_i-a_j\| > 1$ for all $1\leq i\neq j \leq \kappa' $ by the definition of $\kappa' $.
Let $r=\min \{ \|a_i-a_j\| : 1\leq i\neq j \leq \kappa' \}$ and
$\epsilon $ be a positive number less than $\min \{(r-1)/4 , 1/4\}$.
Let $a_0=0$ and $a=(3,0,0,\dots ,0) \in \mathbb{R}^d$.

Let $V$ be a set of $n$ points such that $\text{card}(V\cap B_{\epsilon}(a_0))=1$, $\text{card}(V\cap B_{\epsilon}(a_i))=s_i\leq k$ for every $1\leq i \leq \kappa' $ and all the remaining points of $V$ are in $B_{\epsilon}(a)$.
See Figure \ref{fig:indegreej} for an example.
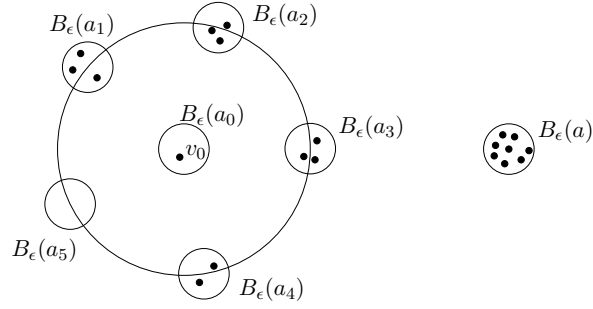
\begin{figure}
\centering
\scalebox{.8}{
\begin{tikzpicture}[line cap=round,line join=round,>=triangle 45,x=0.6cm,y=0.6cm]
\clip(-0.9828181818181825,-0.34101818181818455) rectangle (15.95998181818182,8.516181818181808);
\draw(4.,4.) circle (2.1cm);
\draw(7.499977145557413,4.012648342807897) circle (0.42cm);
\draw(4.955535575486824,7.3670390202638085) circle (0.42cm);
\draw(1.3363145650126254,6.27041403788739) circle (0.42cm);
\draw(0.8493974173254815,2.475630174122877) circle (0.42cm);
\draw(4.55458716405143,0.544217443549234) circle (0.42cm);
\draw(13.,4.) circle (0.42cm);
\draw(4.,4.) circle (0.42cm);
\draw (3.815181818181818,4.329581818181812) node[anchor=north west] {$v_0$};
\draw (0.0385818181818178,7.990181818181809) node[anchor=north west] {$B_{\epsilon}(a_1)$};
\draw (5.660781818181819,8.249981818181809) node[anchor=north west] {$B_{\epsilon}(a_2)$};
\draw (8.05658181818182,5.152381818181811) node[anchor=north west] {$B_{\epsilon}(a_3)$};
\draw (5.297781818181819,0.6511818181818148) node[anchor=north west] {$B_{\epsilon}(a_4)$};
\draw (3.644381818181819,5.482981818181812) node[anchor=north west] {$B_{\epsilon}(a_0)$};
\draw (-1.0031818181818177,1.7369818181818138) node[anchor=north west] {$B_{\epsilon}(a_5)$};
\draw (13.57418181818182,5.055581818181811) node[anchor=north west] {$B_{\epsilon}(a)$};
\begin{scriptsize}
\draw [fill=black] (1.5945454545454547,5.976363636363633) circle (1.5pt);
\draw [fill=black] (0.9218181818181821,6.194545454545451) circle (1.5pt);
\draw [fill=black] (1.14,6.649090909090905) circle (1.5pt);
\draw [fill=black] (4.776363636363637,7.285454545454541) circle (1.5pt);
\draw [fill=black] (5.194545454545455,7.430909090909086) circle (1.5pt);
\draw [fill=black] (5.,7.) circle (1.5pt);
\draw [fill=black] (7.321818181818182,3.794545454545452) circle (1.5pt);
\draw [fill=black] (7.630909090909092,3.703636363636361) circle (1.5pt);
\draw [fill=black] (4.430909090909091,0.30363636363636226) circle (1.5pt);
\draw [fill=black] (3.8854545454545457,3.7763636363636337) circle (1.5pt);
\draw [fill=black] (12.830909090909092,4.394545454545451) circle (1.5pt);
\draw [fill=black] (13.158181818181818,4.34) circle (1.5pt);
\draw [fill=black] (13.34,3.703636363636361) circle (1.5pt);
\draw [fill=black] (12.885454545454545,3.5945454545454516) circle (1.5pt);
\draw [fill=black] (13.,4.) circle (1.5pt);
\draw [fill=black] (12.63090909090909,4.103636363636361) circle (1.5pt);
\draw [fill=black] (12.594545454545456,3.81272727272727) circle (1.5pt);
\draw [fill=black] (13.558181818181819,3.9581818181818154) circle (1.5pt);
\draw [fill=black] (7.6854545454545455,4.230909090909088) circle (1.5pt);
\draw [fill=black] (4.830909090909092,0.7581818181818166) circle (1.5pt);
\end{scriptsize}
\end{tikzpicture}}
\caption{An illustration of $n$ points with $d_{in}(v_0)=j$ for $d=2$, $n=20$, $k=3$ and $j=11$.
Notice that $s_1=s_2=s_3=3$, $s_4=2$ and $s_5=0$.}
\label{fig:indegreej}
\end{figure}
Let $v_i\in B_{\epsilon}(a_i)$ for every $i$ and $v\in B_{\epsilon}(a)$.
Then we have
\begin{align}
\|v_i-v_j\|> \|a_i-a_j\| -2\epsilon \geq r-2\epsilon >1+2\epsilon > \|v_i-v_0\| \label{eq:vivj}
\end{align}
and
\begin{align}
\|v_i-v\| > \|a_i-a\|-2\epsilon \geq 2-2\epsilon > 1+2\epsilon >\|v_i-v_0\| \label{eq:viv}
\end{align}
for all $1\leq i\neq j \leq \kappa' $.
In words, 0 is closer to $v_i$ than the points in $B_{\epsilon}(v_j)$ with $j\neq i$ and $B_{\epsilon}(a)$.
Therefore, since $s_i\leq k$ for every $i$, results in \eqref{eq:vivj} and \eqref{eq:viv} imply that
0 is a $k$NN to each $v_i$ with $i\geq 1$.
Also, for $v,w\in V\cap B_{\epsilon}(a)$ we have
\begin{align*}
\|v-w\|< 2 \epsilon < 3- 2 \epsilon < \|v-v_0\|,
\end{align*}
which implies that $w$ is closer to $v$ than $v_0$.
As $n-(s_1+\cdots +s_{\kappa' })-1 \geq n-\kappa'  k-1 \geq k+1$, $V$ contains at least $k+1$ points in $B_{\epsilon}(a)$,
and hence $v_0$ is a $k$NN to none of the points in $B_{\epsilon}(a)$.
Thus, $v_0$ is a $k$NN to exactly $j=s_1+\cdots +s_{\kappa' }$ points in $V$, that is,
$d_{in}(v_0)=j$ in $kNND(V)$.
Note that $j$ can attain any value through $0$ to $\kappa' k$ for convenient values of $s_i$'s.

Clearly, having a scaled and translated version of such a configuration described above under $\mathcal{P}_{n}$ or $\mathcal{U}_{n}$ is of positive probability, and therefore, desired result follows.
\end{proof}

\subsection{Asymptotic distribution of $Q^{(k)}_j$}
\label{subsec:cltforqjk}
We now obtain LLN and CLT results for $Q^{(k)}_j$ for every $0\leq j \leq \kappa' k$ by Theorem \ref{thm:mainllnclt}.

\begin{cor}[LLN and CLT for $Q_j^{(k)}$]
\label{thm:llnclrqj}
For every $0\leq j \leq \kappa' k$, there exist constants $q_j(d,k), \tau_j^2:=\tau_j^2 (d,k)$ and $\sigma_j^2:=\sigma_j^2(d,k)$ with $0 < \tau_j^2\leq \sigma_j^2$ such that as $n\rightarrow \infty$,
\begin{gather*}
n^{-1}Q_j^{(k)}(\mathcal{U}_n) \xrightarrow{c.m.c.c.} q_j(d,k) , \\
  n^{-1} \Var (Q_j^{(k)}(\mathcal{U}_{n})) \rightarrow \tau_j^2,\\
n^{-1/2} ( Q_j^{(k)}(\mathcal{U}_{n})-\E(Q_j^{(k)}(\mathcal{U}_{n})) ) \xrightarrow{\mathcal{L}} \mathcal{N}(0,\tau_j^2),\\
n^{-1} Q_j^{(k)}(\mathcal{P}_n) \xrightarrow{c.m.c.c.} q_j(d,k) , \\
  n^{-1} \Var (Q_j^{(k)}(\mathcal{P}_{n})) \rightarrow \sigma_j^2,\\
n^{-1/2} ( Q_j^{(k)}(\mathcal{P}_{n})-\E(Q_j^{(k)}(\mathcal{P}_{n})) ) \xrightarrow{\mathcal{L}} \mathcal{N}(0,\sigma_j^2).
\end{gather*}
\end{cor}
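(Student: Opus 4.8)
The plan is to express $Q^{(k)}_j$ as a fixed linear combination of copy-counts $H_D$ for weakly connected digraphs $D$, and then to invoke Theorem~\ref{thm:mainllnclt}. For each integer $i\geq 0$ let $S_i$ denote the \emph{in-star}: the digraph with vertex set $\{0,1,\dots,i\}$ and arc set $\{(1,0),\dots,(i,0)\}$, so that its underlying graph is the star $K_{1,i}$ and $S_i$ is finite and weakly connected. A subdigraph of $kNND(V)$ isomorphic to $S_i$ is precisely a choice of a vertex $v$ together with an $i$-element subset of its in-neighbors, whence
\begin{align*}
H_{S_i}(V)=\sum_{v\in V}\binom{d_{in}(v)}{i}=\sum_{l\geq i}\binom{l}{i}\,Q^{(k)}_l(V).
\end{align*}
By Lemma~\ref{lem:indegreebound} no indegree exceeds $\kappa' k$, so this is a finite triangular system (with $1$'s on the diagonal), and binomial inversion yields
\begin{align*}
Q^{(k)}_j(V)=\sum_{i=j}^{\kappa' k}(-1)^{i-j}\binom{i}{j}\,H_{S_i}(V).
\end{align*}

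Since each $S_i$ is finite and weakly connected, applying Theorem~\ref{thm:mainllnclt} to this linear combination immediately produces constants $q_j(d,k)$, $\tau_j^2(d,k)$ and $\sigma_j^2(d,k)$, independent of $B_0$, together with all six convergence statements and the inequalities $0\leq \tau_j^2\leq \sigma_j^2$. Thus everything in the corollary follows except the \emph{strict} inequality $\tau_j^2>0$.

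To upgrade $\tau_j^2\geq 0$ to $\tau_j^2>0$, I would appeal to the last clause of Proposition~\ref{prop:mainclt}: since $Q^{(k)}_j$ inherits the hypotheses of that proposition from the $H_{S_i}$ via Lemma~\ref{lem:linearsum}, it suffices to show that the limiting add-one cost $\Delta_{Q^{(k)}_j}(\infty)$ is non-degenerate, i.e.\ not almost surely constant. The plan is to exhibit two local patterns near the origin, each of positive probability under $\mathcal{P}$, for which insertion of the origin changes the number of indegree-$j$ vertices by two distinct amounts. Concretely, I would adapt the explicit construction in the proof of Lemma~\ref{lem:qjtreshold}: place a tightly controlled cluster of points near the origin and push the remaining mass far away (beyond the stabilization radius), so that after insertion the origin is forced to have indegree exactly $j$ in one pattern and a different indegree in the other. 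The main obstacle is the add-one bookkeeping: inserting the origin not only may endow the origin with indegree $j$, but also raises by one the indegree of each of its $k$ out-neighbors and can redirect the $k$-th nearest-neighbor arc of any vertex to which the origin becomes a new $k$NN, each such event perturbing some indegree by $\pm 1$. The delicate part will be to engineer the two configurations so that all of these side effects coincide, leaving a net difference in $\Delta_{Q^{(k)}_j}$ that is provably nonzero, which establishes non-degeneracy and hence $\tau_j^2>0$ (and a fortiori $\sigma_j^2\geq \tau_j^2>0$).
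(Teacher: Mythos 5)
Your reduction of $Q^{(k)}_j$ to in-star counts is exactly the paper's argument: the digraphs you call $S_i$ are the paper's $D_i$, the identity $H_{S_i}(V)=\sum_{l\geq i}\binom{l}{i}Q^{(k)}_l(V)$ followed by binomial inversion is precisely equation \eqref{eq:qjincexc}, and Theorem~\ref{thm:mainllnclt} (via Lemma~\ref{lem:linearsum}) then delivers all six convergence statements with $0\leq\tau_j^2\leq\sigma_j^2$. Up to that point the proposal is complete, correct, and identical in route to the paper.

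The gap is that you never actually prove $\tau_j^2>0$: you only describe what you \emph{would} do and yourself flag the bookkeeping as the delicate part. That bookkeeping is the bulk of the paper's proof of this corollary, and it is not routine. Three ingredients are missing. First, an isolation device: the paper surrounds a ball $B^{\infty}_4(0)$ by an empty shell $B^{\infty}_L(0)\setminus B^{\infty}_4(0)$ and a densely populated annulus $B^{\infty}_{L+1}(0)\setminus B^{\infty}_L(0)$ (its events $E_1$ and $E_2$), which guarantees that inserting the origin perturbs only the $k$NN relations inside $B^{\infty}_4(0)$; without something of this kind the local patterns you propose do not by themselves determine $\Delta_{Q^{(k)}_j}(\infty)$. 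Second, the explicit local patterns must be engineered so that the side effects are computable: the paper pairs each cluster $B_{\epsilon}(a_i)$ of $s_i$ points with a companion cluster $B_{\epsilon}(b_i)$ of $k+1-s_i$ points precisely so that every point has indegree exactly $k$ before insertion, which tames the displacement of $k$-th nearest neighbours and the origin's own out-arcs that you correctly worry about. Third, and most importantly, a single pair of configurations does not work uniformly in $j$: the paper's two basic configurations ($E_3$ and $E_4$) produce add-one costs that are consistent with one another exactly when $j\in\{0,k,k+1\}$ (and $E_3$ yields no usable conclusion at all when $j=k$), so a third configuration $E_5$ of a genuinely different shape is needed for those three indices. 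Since strict positivity of $\tau_j^2$ is part of the assertion, the proof is incomplete without carrying out this construction.
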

\begin{proof}
Let $D_i$ be the digraph with $V(D_i)=\{1,2,\dots, i+1\}$ and $A(D_i)=\{ (1,i+1),(2,i+1), \dots, (i,i+1) \}$,
for every $1\leq i \leq \kappa' k$.
Also, let $D_0$ be the digraph with only one vertex, i.e., $V(D_0)=\{1\}$ and $A(D_0)=\emptyset$.
Notice that $H_{D_0}(V)=\text{card}(V)$ for any finite point set $V$.
By principle of inclusion exclusion, it is easy to see that
\begin{align}
Q_j^{(k)} (V) = \sum_{i=j}^{\kappa' k} (-1)^{i-j} { i \choose j } H_{D_i}(V), \label{eq:qjincexc}
\end{align}
for every $0\leq j \leq \kappa' k$ and finite point set $V$.
Then the desired asymptotic results follow by Theorem \ref{thm:mainllnclt} and \eqref{eq:qjincexc}.

\begin{figure}
\centering
\scalebox{.8}{
\begin{tikzpicture}[line cap=round,line join=round,>=triangle 45,x=0.6cm,y=0.6cm]
\clip(-0.7851852258351781,-0.5245176631562438) rectangle (12.003799595687203,10.444402904828786);
\draw (1.,9.)-- (10.,9.);
\draw (10.,9.)-- (10.,0.);
\draw (10.,0.)-- (1.,0.);
\draw (1.,0.)-- (1.,9.);
\draw (2.,8.)-- (9.,8.);
\draw (9.,8.)-- (9.,1.);
\draw (9.,1.)-- (2.,1.);
\draw (2.,1.)-- (2.,8.);
\draw (2.,8.)-- (1.,8.);
\draw (1.,7.)-- (2.,7.);
\draw (2.,6.)-- (1.,6.);
\draw (2.,8.)-- (2.,9.);
\draw (3.,9.)-- (3.,8.);
\draw (4.,9.)-- (4.,8.);
\draw (7.58711034043659,10.05612253074082) node[anchor=north west] {$B^{\infty}_{L+1}(0)\backslash B^{\infty}_L (0)$};
\draw (5.839848657040743,6.003446126197679) node[anchor=north west] {$B^{\infty}_4(0)$};
\draw (4.8,5.2)-- (6.2,5.2);
\draw (6.2,5.2)-- (6.2,3.8);
\draw (6.2,3.8)-- (4.8,3.8);
\draw (4.8,3.8)-- (4.8,5.2);
\begin{scriptsize}
\draw [fill=black] (1.170549199482442,8.677953593664306) circle (1.5pt);
\draw [fill=black] (1.7226015492742301,8.240118971415646) circle (1.5pt);
\draw [fill=black] (2.484053066228421,8.2591552593395) circle (1.5pt);
\draw [fill=black] (2.7505610971623877,8.696989881588161) circle (1.5pt);
\draw [fill=black] (3.3406860228018855,8.278191547263356) circle (1.5pt);
\draw [fill=black] (3.3216497348780307,8.773135033283578) circle (1.5pt);
\draw [fill=black] (1.741637837198085,7.649994045776149) circle (1.5pt);
\draw [fill=black] (1.3418757907971348,7.326377151070617) circle (1.5pt);
\draw [fill=black] (1.7035652613503756,6.526853058268717) circle (1.5pt);
\draw [fill=black] (1.2657306391017158,6.545889346192572) circle (1.5pt);
\draw [fill=black] (4.25885342104692,8.5013082405611) circle (0.5pt);
\draw [fill=black] (4.5,8.5) circle (0.5pt);
\draw [fill=black] (4.7504167146820135,8.487653704626792) circle (0.5pt);
\draw [fill=black] (1.4998699622776628,5.73746407842566) circle (0.5pt);
\draw [fill=black] (1.5,5.5) circle (0.5pt);
\draw [fill=black] (1.4998699622776628,5.287108091720496) circle (0.5pt);
\draw [fill=black] (6.,4.5) circle (1.5pt);
\draw [fill=black] (1.533176510771521,6.133139800981452) circle (1.5pt);
\draw [fill=black] (1.1892738094513566,7.76667763225224) circle (1.5pt);
\draw [fill=black] (2.2661191429601217,8.658030446236445) circle (1.5pt);
\draw [fill=black] (3.748554724838356,8.137877610489694) circle (1.5pt);
\draw [fill=black] (3.748554724838356,8.71004572981112) circle (1.5pt);
\draw [fill=black] (4.900042410009558,9.382623370044977) circle (1.5pt);
\draw [fill=black] (2.887261817933763,9.40982310777573) circle (1.5pt);
\draw [fill=black] (0.419576576059164,7.4616789240189165) circle (1.5pt);
\draw [fill=black] (-0.2755672348447226,5.833573462534194) circle (1.5pt);
\draw [fill=black] (0.7056606357198758,4.54882304383711) circle (1.5pt);
\draw [fill=black] (0.05546959103643985,2.6242575515741304) circle (1.5pt);
\draw [fill=black] (1.7916129211269884,8.695966167216133) circle (1.5pt);
\draw [fill=black] (1.500194246185594,9.395370987075484) circle (1.5pt);
\draw [fill=black] (3.8315436457167507,9.861640866981716) circle (1.5pt);
\draw [fill=black] (5.0544809356466125,4.075195053015897) circle (1.5pt);
\draw [fill=black] (5.61698948166229,4.589771584065703) circle (1.5pt);
\draw [fill=black] (6.,5.) circle (1.5pt);
\draw [fill=black] (5.962116132599114,4.150137408727572) circle (1.5pt);
\draw [fill=black] (5.1793848618327365,4.824618610132642) circle (1.5pt);
\draw [fill=black] (5.5,4.) circle (1.5pt);
\draw [fill=black] (5.3042887880188605,4.308349048563329) circle (1.5pt);
\end{scriptsize}
\end{tikzpicture}}
\caption{An illustration of the event $E_1\cap E_2$ for $d=2$ and $k=2$.}
\label{fig:E1E2}
\end{figure}
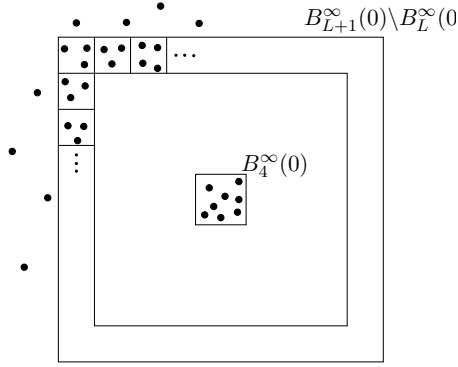

To show that both $\tau_j^2$ and $\sigma_j^2$ are positive,
it suffices to prove that $\Delta_{Q_j^{(k)}} (\infty)$ is non-degenerate for every $0\leq j \leq \kappa' k$ by Proposition \ref{prop:mainclt}.
The main idea in the proof is to present two configurations with different add one costs for $Q_j^{(k)}$.
In the configurations we provide, the points near the origin are separated away from the other points, and so,
the insertion of 0 changes only the $k$NN relations between the points around the origin.

Let $L$ be a large positive integer.
Let $B^{\infty}_r(x)$ denote the corresponding $l_{\infty}$ ball, that is $B^{\infty}_r(x)=[-r,r]^d+x$.
Partition the annulus $B^{\infty}_{L+1}(0)\backslash B^{\infty}_L (0)$ into a finite collection of unit cubes.
Let $E_1$ be the event that each unit cube in the partition contains at least $k+1$ points from $\mathcal{P}$ and
$E_2$ be the event that there is no point of $\mathcal{P}$ in $B^{\infty}_{L}(0)\backslash B^{\infty}_4(0)$.
Note that whenever both $E_1$ and $E_2$ occur and $B^{\infty}_4(0)$ contains at least $k+1$ points,
any $k$NN of a point in $B^{\infty}_4(0)$ lies in $B^{\infty}_4(0)$ and
every $k$NN of a point in $\mathbb{R}^d \backslash B^{\infty}_L(0)$ lies in $\mathbb{R}^d \backslash B^{\infty}_L(0)$ for large $L$.
Therefore, in this case,
the insertion of a point at the origin can only affect the $kNND$ of the points in $B^{\infty}_4(0)$.
See Figure \ref{fig:E1E2} for an illustration.

Let $a_1, \dots , a_{\kappa' }$ and $r$ be as defined in the proof of Lemma \ref{lem:qjtreshold}.
Let $b_i=(1+r)a_i/2$ for all $1\leq i \leq \kappa' $ and $\epsilon$ be a positive real number less than $(r-1)/8$.
Now fix $j$ and let $s_1,s_2,\dots , s_{\kappa' }$ be nonnegative integers not greater than $k$ such that $s_1+\cdots + s_{\kappa' }=j$.
Let $E_3$ be the event that $B_{\epsilon}(a_i)$ contains $s_i$ points and $B_{\epsilon}(b_i)$ contains $k+1-s_i$ points
for every $1\leq i \leq \kappa' $ and there are no other points in $B^{\infty}_4(0)$.
See Figure \ref{fig:E3} for an illustration.
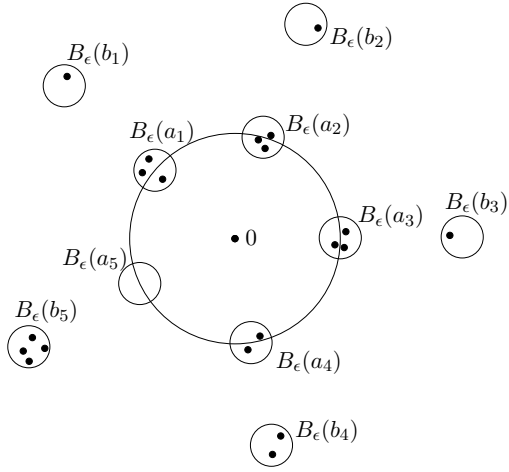
\begin{figure}
\centering
\scalebox{.8}{
\begin{tikzpicture}[line cap=round,line join=round,>=triangle 45,x=0.5cm,y=0.5cm]
\clip(-4.4590187666289,-4.000907322702685) rectangle (13.806795061843474,11.952300776119841);
\draw(3.9787900598438,4.021209940156201) circle (1.75cm);
\draw(7.478677118264148,4.0493271644460735) circle (0.35cm);
\draw(4.914126457687171,7.393915949115385) circle (0.35cm);
\draw(1.318918163236325,6.2960904850722965) circle (0.35cm);
\draw(0.8133477506193838,2.5278998628541034) circle (0.35cm);
\draw(4.512535490708562,0.5621470880196586) circle (0.35cm);
%\draw(3.9787900598438,4.021209940156201) circle (0.35cm);
\draw (4.056090197510496,4.581514755777788) node[anchor=north west] {$0$};
\draw (0.1894648089478173,8.218362273216117) node[anchor=north west] {$B_{\epsilon}(a_1)$};
\draw (5.374513956229649,8.452018830386628) node[anchor=north west] {$B_{\epsilon}(a_2)$};
\draw (7.794877057971871,5.42292868504058) node[anchor=north west] {$B_{\epsilon}(a_3)$};
\draw (5.068695927160443,0.5763844524869007) node[anchor=north west] {$B_{\epsilon}(a_4)$};
\draw (-2.037989278259801,3.9910704980265313) node[anchor=north west] {$B_{\epsilon}(a_5)$};
%\draw (4.2609385550681685,5.42292868504058) node[anchor=north west] {$B_{\epsilon}(a_0)$};
\draw(6.334837395342332,11.150986524954792) circle (0.35cm);
\draw(11.530945090299781,4.077923443347411) circle (0.35cm);
\draw(5.191930275816238,-2.851364005683208) circle (0.35cm);
\draw(-2.872350129672476,0.424768061469517) circle (0.35cm);
\draw(-1.694326136572938,9.104944852729277) circle (0.35cm);
\draw (6.854432800220619,11.346482747050631) node[anchor=north west] {$B_{\epsilon}(b_2)$};
\draw (-1.8914237065546865,10.806038489299374) node[anchor=north west] {$B_{\epsilon}(b_1)$};
\draw (10.686431960101253,5.863372942791836) node[anchor=north west] {$B_{\epsilon}(b_3)$};
\draw (5.70817051340016,-1.712261435107892) node[anchor=north west] {$B_{\epsilon}(b_4)$};
\draw (-3.617604837366114,2.2601819825240185) node[anchor=north west] {$B_{\epsilon}(b_5)$};
\begin{scriptsize}
\draw [fill=black] (3.9787900598438,4.021209940156201) circle (1.5pt);
\draw [fill=black] (3.9787900598438,4.021209940156201) circle (1.5pt);
\draw [fill=black] (1.573335514389255,5.997573576519834) circle (1.5pt);
\draw [fill=black] (0.9006082416619823,6.2157553947016515) circle (1.5pt);
\draw [fill=black] (1.1187900598438,6.670300849247106) circle (1.5pt);
\draw [fill=black] (4.755153696207437,7.306664485610741) circle (1.5pt);
\draw [fill=black] (5.173335514389255,7.452119031065287) circle (1.5pt);
\draw [fill=black] (4.9787900598438,7.021209940156201) circle (1.5pt);
\draw [fill=black] (7.300608241661982,3.815755394701652) circle (1.5pt);
\draw [fill=black] (7.609699150752892,3.724846303792562) circle (1.5pt);
\draw [fill=black] (4.409699150752891,0.32484630379256296) circle (1.5pt);
\draw [fill=black] (7.664244605298346,4.2521190310652885) circle (1.5pt);
\draw [fill=black] (4.809699150752892,0.7793917583380172) circle (1.5pt);
\draw [fill=black] (-1.6013242423808693,9.41495116670284) circle (1.5pt);
\draw [fill=black] (6.737845603507963,11.026983999365367) circle (1.5pt);
\draw [fill=black] (11.120320648875746,4.129251498525416) circle (1.5pt);
\draw [fill=black] (5.499898606884265,-2.5433956746151805) circle (1.5pt);
\draw [fill=black] (-2.7539634100386428,0.7295348507078568) circle (1.5pt);
\draw [fill=black] (-2.8613994271339855,-0.058329274657992834) circle (1.5pt);
\draw [fill=black] (-2.3421253445064942,0.3714147937233797) circle (1.5pt);
\draw [fill=black] (-3.0583654584754476,0.2818847794772604) circle (1.5pt);
\draw [fill=black] (5.225850569145414,-3.1572042393111306) circle (1.5pt);
\end{scriptsize}
\end{tikzpicture}}
\caption{An illustration of the event $E_3$ for $d=2$, $k=3$, $j=11$, $s_1=s_2=s_3=3$, $s_4=2$ and $s_5=0$.}
\label{fig:E3}
\end{figure}
When $E_3$ occurs, it is easy to verify that $k$NN's of a point in $B_{\epsilon}(a_i)\cup B_{\epsilon}(b_i)$ lies in the same union.
Therefore, all the indegrees of the points in $B^{\infty}_4(0)$ are $k$.
Once the origin is inserted to the set, 0 becomes a $k$NN to the points in $B_{\epsilon}(a_i)$ for each $i$ and not a $k$NN to any point in $B_{\epsilon}(b_i)$.
Thus, the indegree of 0 is $s_1+\cdots + s_{\kappa' }=j$.
Then for $j\neq k$, we see that whenever $E_1\cap E_2\cap E_3$ occurs, $Q_j$ definitely increases.
As the event $E_1\cap E_2\cap E_3$ has a positive probability, we obtain
\begin{align}\label{eq:deltae1e2e3}
P(\Delta_{Q_j^{(k)}} (\infty)\geq 1)>0\text{ for every } j\neq k.
\end{align}

Now let $E_4$ be the event that there are $k$ points in $B_{\epsilon}(a_1)$, one point in $B_{\epsilon}(b_1)$ and no other points in $B^{\infty}_4(0)$.
After the addition of 0, the indegree of the point in $B_{\epsilon}(b_1)$ becomes 0,
the indegree of each of the points in $B_{\epsilon}(a_1)$ increases to $k+1$ and the indegree of 0 is $k$.
Since $E_1\cap E_2\cap E_4$ is an event with positive probability, we have
\begin{align}\label{eq:deltaE41}
P(\Delta_{Q_0^{(k)}} (\infty)=1)>0, P(\Delta_{Q_k^{(k)}} (\infty)=-k)>0, P(\Delta_{Q_{k+1}^{(k)}} (\infty)=k)>0,
\end{align}
and
\begin{align}\label{eq:deltaE42}
P(\Delta_{Q_j^{(k)}} (\infty)=0)>0 \text{ for each } j\notin \{0,k,k+1 \}.
\end{align}
Then by the results in \eqref{eq:deltae1e2e3} and \eqref{eq:deltaE42},
we obtain that $\Delta_{Q_j^{(k)}}$ is non-degenerate for every $0 \leq j \leq \kappa' $ with $j\notin \{0,k,k+1 \}$.

Next let $E_5$ be the event that each of $B_{\epsilon}(-0.4 a_1)$, $B_{\epsilon}(0.6 a_1)$ and $B_{\epsilon}(1.3 a_1)$  has one point, $B_{\epsilon}(0.5 a_1)$ contains $k-1$ points and there is no other point in $B^{\infty}_4(0)$, where $0<\epsilon <0.1$.
See Figure \ref{fig:E5} for an example.
One can easily see that the indegrees of the points in $B_{\epsilon}(-0.4 a_1)$, $B_{\epsilon}(0.6 a_1)$ and $B_{\epsilon}(1.3 a_1)$ are $0, k+1$ and $k$, respectively, and every point in $B_{\epsilon}(0.5 a_1)$ is of indegree $k+1$.
After the addition of the origin, the indegrees of the points in $B_{\epsilon}(-0.4 a_1)$, $B_{\epsilon}(0.6 a_1)$ and $B_{\epsilon}(1.3 a_1)$ becomes $1, k$ and $0$, respectively, and the indegree of every point in $B_{\epsilon}(0.5 a_1)$ increases to $k+2$.
Also, 0 is of indegree $k+1$.
Therefore, as $E_1\cap E_2\cap E_5$ is an event with positive probability, we have
\begin{align}\label{eq:deltaE5}
P(\Delta_{Q_0^{(k)}} (\infty)=0)>0, P(\Delta_{Q_k^{(k)}} (\infty)\geq 0)>0 \text{ and } P(\Delta_{Q_{k+1}^{(k)}} (\infty)=-(k-1))>0.
\end{align}
Then the results in \eqref{eq:deltaE41} and \eqref{eq:deltaE5} imply that $\Delta_{Q_j^{(k)}}$ is non-degenerate as well
whenever $j\in \{0,k,k+1 \}$, and we are done.

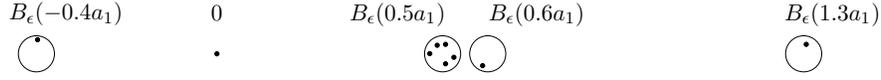
\begin{figure}
\centering
\scalebox{.8}{
\begin{tikzpicture}[line cap=round,line join=round,>=triangle 45,x=1.5cm,y=1.5cm]
\clip(0.6427619196952202,0.6119347400652031) rectangle (10.7818369328736,1.7932859320768735);
\draw(1.,1.) circle (0.3cm);
\draw(5.5,1.) circle (0.3cm);
\draw(6.,1.) circle (0.3cm);
\draw(9.5,1.) circle (0.3cm);
\draw (2.843828589352725,1.628335508574526) node[anchor=north west] {$0$};
\draw (5.920794069802475,1.6583787889793948) node[anchor=north west] {$B_{\epsilon}(0.6 a_1)$};
\draw (4.3799012106568245,1.6578335508574526) node[anchor=north west] {$B_{\epsilon}(0.5 a_1)$};
\draw (9.198391693630094,1.6570156936745394) node[anchor=north west] {$B_{\epsilon}(1.3 a_1)$};
\draw (0.6050297774389155,1.6591966461623078) node[anchor=north west] {$B_{\epsilon}(-0.4 a_1)$};
\begin{scriptsize}
\draw [fill=black] (3.,1.) circle (1.0pt);
\draw [fill=black] (1.0125762058901764,1.1563835502966686) circle (1.0pt);
\draw [fill=black] (5.5325285927174175,0.8892954547114214) circle (1.0pt);
\draw [fill=black] (5.5325285927174175,1.1050204549918135) circle (1.0pt);
\draw [fill=black] (5.357894068680911,1.002294264382103) circle (1.0pt);
\draw [fill=black] (5.943433355156257,0.8687502165894793) circle (1.0pt);
\draw [fill=black] (9.528577407435137,1.1050204549918135) circle (1.0pt);
\draw [fill=black] (5.440075021168679,1.0947478359308422) circle (1.0pt);
\draw [fill=black] (5.624982164266156,0.9612037881382187) circle (1.0pt);
\end{scriptsize}
\end{tikzpicture}}
\caption{An illustration of the event $E_5$ for $d=2$ and $k=6$.}
\label{fig:E5}
\end{figure}
\end{proof}

Computing the exact values of the constants $q_j(d,k),\tau_j^2$ and $\sigma_j^2$ analytically is tedious, if possible at all.
For the case $k=1$, the results in \cite{newmanRT:1983} and \cite{henze:1987} imply
\begin{align}
q_j(d,1)=\frac{1}{j!}\sum _{i=0}^{\kappa' -j} \frac{1}{i!} (-1)^i b_{j+i}(d) \nonumber
\end{align}
for every $0\leq j \leq \kappa' $, where $b_0(d)=b_1(d)=1$,
\begin{align}
 b_s(d)=\underset{{\Gamma_s}}{\int \cdots \int} {\rm exp} \left[-m \left(\bigcup_{i=1}^{s} B_{\|x_i\|}(x_i) \right ) \right ]dx_1\dots dx_s, \nonumber
\end{align}
and
\begin{align}
\Gamma_s= \left \{ (x_1,\dots,x_s): x_i\in \mathbb{R}^d, \|x_i\|< \underset{1\leq l\leq s, l\neq i}{\min} \|x_i-x_l\|, 1\leq i\leq s \right \} \nonumber
\end{align}
for each $2\leq s\leq \kappa' $.
The values of $b_s(d)$ are only approximated with Monte Carlo simulations.
For the two dimensional case, \cite{cuzick:1990} provide
\begin{align}
& q_0(2,1)\approx 0.284,\ q_1(2,1)\approx 0.463 ,\ q_2(2,1) \approx 0.221 ,\nonumber
\\
& q_3(2,1) \approx 3.04\times 10^{-2} ,\ q_4(2,1)\approx 6.58 \times 10^{-4} ,\ q_5(2,1)\approx 1.90\times 10^{-7}. \nonumber
\end{align}
On the other hand, for $d=1$ and $k=1$ all the limit values are known.
In \cite{bahadir:2016} we have $q_0(1,1)=q_2(1,1)=1/4$, $q_1(1,1)=1/2$, $\tau_0^2 (1,1)=\tau_2^2 (1,1)=19/240$ and
$\tau_1^2 (1,1)=19/60$.
Using the results in \cite{bahadir:2016}, one can easily show that
$\sigma_0^2 (1,1)=\sigma_2^2 (1,1)=17/120$ and $\sigma_1^2 (1,1)=17/30$.

Moreover, limiting value of $q_j(d,k)$ as $d\rightarrow \infty$ is studied by some authors.
\cite{newmanRT:1983} focus on the case $k=1$ and show that
$$\lim_{d\rightarrow \infty} q_j(d,1) = \frac{e^{-1}}{j!}$$
for every $j\geq 0$,
whereas \cite{yao:1996} provide the answer for all $k$, that is,
$$\lim_{d\rightarrow \infty} q_j(d,k) = \frac{e^{-k}k^j}{j!}$$
for every $j\geq 0$.

\subsection{Joint Distribution of $Q_j^{(k)}$'s}
\label{subsec:asympQj}
Let $Q_{in}^{(k)}=\left ( Q_0^{(k)}, \dots , Q_{\kappa' k}^{(k)}\right)$ and
$\Sigma_{Q_{in}^{(k)}} (\mathcal{U}_n)$ (resp. $\Sigma_{Q_{in}^{(k)}} (\mathcal{P}_n)$) be the covariance matrix of $Q_{in}^{(k)}(\mathcal{U}_n)$ (resp. $Q_{in}^{(k)}(\mathcal{P}_n)$).
Note that for any two random variables $X$ and $Y$, we have $\Cov(X,Y)=( \Var(X+Y)-\Var(X-Y) )/4$.
Since any linear combination of $Q_j^{(k)}$'s is a linear combination of $H_{D_i}$'s defined in the proof of Theorem \ref{thm:llnclrqj},
Theorem \ref{thm:mainllnclt} implies that $\Cov(Q_{j_1}^{(k)}, Q_{j_2}^{(k)})/n$ converges to a constant as $n\rightarrow \infty$ for any $0\leq j_1,j_2 \leq \kappa' k$.
Therefore, there exist constant $(1+\kappa' k) \times (1+\kappa' k)$ matrices $\Sigma_{\mathcal{U}}:=\Sigma_{\mathcal{U}}(k,d)$ and $\Sigma_{\mathcal{P}}:=\Sigma_{\mathcal{P}}(k,d)$ such that
\begin{align}
\Sigma_{Q_{in}^{(k)}} (\mathcal{U}_n)/n \rightarrow \Sigma_{\mathcal{U}} \text{ and }
\Sigma_{Q_{in}^{(k)}} (\mathcal{P}_n)/n \rightarrow \Sigma_{\mathcal{P}}, \label{eq:limmat}
\end{align}
as $n\rightarrow \infty$.
Similarly, we see that any linear combination of $Q_j^{(k)}/\sqrt{n}$'s converges in law to a normal variable, and therefore,
Cramer-Wold device yields the following corollary.

\begin{cor}
\label{cor:qjjoint}
Let $ \Sigma_{\mathcal{U}}$ and $\Sigma_{\mathcal{P}}$ be the limiting matrices in \eqref{eq:limmat}.
Then, as $n\rightarrow \infty$ we have
\begin{align*}
n^{-1/2} (Q_{in}^{(k)}(\mathcal{U}_n)-\E(Q_{in}^{(k)}(\mathcal{U}_n))) \xrightarrow{\mathcal{L}} \mathcal{N}(\bf{0}, \Sigma_{\mathcal{U}})
\end{align*}
and
\begin{align*}
n^{-1/2} (Q_{in}^{(k)}(\mathcal{P}_n)-\E(Q_{in}^{(k)}(\mathcal{P}_n))) \xrightarrow{\mathcal{L}} \mathcal{N}(\bf{0}, \Sigma_{\mathcal{P}}),
\end{align*}
where $\bf{0}$ is the zero vector in $\mathbb{R}^{1+\kappa' k}$ and $\mathcal{N}(\mathbf{\mu}, \Sigma)$ stands for the multivariate normal variable with mean vector $\mathbf{\mu}$ and covariance matrix $\Sigma$.
\end{cor}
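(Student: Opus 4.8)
The plan is to obtain the multivariate statement by reducing it to the one-dimensional conclusions of Theorem \ref{thm:mainllnclt} through the Cram\'er-Wold device. Recall that a sequence of random vectors converges in law to $\mathcal{N}(\mathbf{0},\Sigma)$ if and only if every fixed linear combination of its coordinates converges in law to the scalar normal $\mathcal{N}(0,\mathbf{c}^T\Sigma\mathbf{c})$. So I would fix an arbitrary coefficient vector $\mathbf{c}=(c_0,\dots,c_{\kappa' k})$ and study the scalar functional $\sum_{j=0}^{\kappa' k} c_j Q_j^{(k)}$, aiming to show it is asymptotically normal with the variance prescribed by the candidate limit matrix.

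The first step is to recognize that this linear combination is again a linear combination of the digraph-counting functionals $H_{D_i}$ introduced in the proof of Corollary \ref{thm:llnclrqj}. Substituting the inclusion--exclusion identity \eqref{eq:qjincexc} into $\sum_j c_j Q_j^{(k)}$ and regrouping produces an expression of the form $\sum_i a_i H_{D_i}$ for suitable real constants $a_i$ determined by $\mathbf{c}$. Since each $D_i$ is a finite weakly connected digraph, Theorem \ref{thm:mainllnclt} applies verbatim to $H=\sum_i a_i H_{D_i}$ and yields that both $n^{-1/2}(H(\mathcal{U}_n)-\E H(\mathcal{U}_n))$ and $n^{-1/2}(H(\mathcal{P}_n)-\E H(\mathcal{P}_n))$ converge in law to centered univariate normals, whose limiting variances are $\lim_n \Var(H(\mathcal{U}_n))/n$ and $\lim_n \Var(H(\mathcal{P}_n))/n$ respectively.

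It then remains to identify these limiting variances with the quadratic forms $\mathbf{c}^T\Sigma_{\mathcal{U}}\mathbf{c}$ and $\mathbf{c}^T\Sigma_{\mathcal{P}}\mathbf{c}$. Expanding by bilinearity of covariance gives $\Var(\sum_j c_j Q_j^{(k)})=\sum_{j_1,j_2} c_{j_1} c_{j_2}\Cov(Q_{j_1}^{(k)},Q_{j_2}^{(k)})$; dividing by $n$ and passing to the limit, each entry $\Cov(Q_{j_1}^{(k)},Q_{j_2}^{(k)})/n$ converges to the corresponding entry of $\Sigma_{\mathcal{U}}$ (resp.\ $\Sigma_{\mathcal{P}}$) by \eqref{eq:limmat}, so the finite double sum converges to $\mathbf{c}^T\Sigma_{\mathcal{U}}\mathbf{c}$ (resp.\ $\mathbf{c}^T\Sigma_{\mathcal{P}}\mathbf{c}$). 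Hence every linear combination is asymptotically normal with precisely the variance dictated by a single covariance matrix, and the Cram\'er-Wold device delivers the joint convergence claimed for $Q_{in}^{(k)}(\mathcal{U}_n)$ and $Q_{in}^{(k)}(\mathcal{P}_n)$.

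I do not expect a substantive obstacle here, since the real analytic work has already been discharged by Theorem \ref{thm:mainllnclt} and by the entrywise convergence \eqref{eq:limmat}, the latter itself a consequence of the polarization identity $\Cov(X,Y)=(\Var(X+Y)-\Var(X-Y))/4$ applied to linear combinations of the $H_{D_i}$. The only point requiring care is the bookkeeping that confirms the limiting variance of an \emph{arbitrary} linear combination is genuinely the quadratic form in the one fixed matrix $\Sigma_{\mathcal{U}}$ (resp.\ $\Sigma_{\mathcal{P}}$), rather than some constant that might vary with $\mathbf{c}$ in an uncontrolled fashion; this self-consistency is exactly what \eqref{eq:limmat} guarantees, so the Cram\'er-Wold reduction is legitimate.
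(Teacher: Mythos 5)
Your proposal is correct and follows essentially the same route as the paper: the paper also observes that any linear combination of the $Q_j^{(k)}$'s is a linear combination of the $H_{D_i}$'s via \eqref{eq:qjincexc}, invokes Theorem \ref{thm:mainllnclt} for univariate asymptotic normality, uses the polarization identity together with \eqref{eq:limmat} to pin down the limiting covariance matrix, and concludes by the Cram\'er--Wold device. Your added bookkeeping identifying the limiting variance with the quadratic form $\mathbf{c}^T\Sigma\mathbf{c}$ is the same consistency check the paper relies on implicitly.
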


Next, we study the ranks of the covariance matrices $\Sigma_{Q_{in}^{(k)}} (\mathcal{U}_n)$ and $\Sigma_{Q_{in}^{(k)}} (\mathcal{P}_n)$.
Since $Q_j^{(k)}(V)$ counts the number of vertices in $kNND(V)$ with indegree $j$, we have
\begin{align}
\label{eq:qjsum}
\text{card}(V)=\sum_{j=0}^{\kappa' k} Q_j^{(k)}(V).
\end{align}
As the number of arcs is equal to the sum of all the indegrees in a digraph, considering the $kNND(V)$ gives
\begin{align}
\label{eq:sumindegree}
\text{card}(V)k=\sum_{j=0}^{\kappa' k} j Q_j^{(k)}(V).
\end{align}
The equations in \eqref{eq:qjsum} and \eqref{eq:sumindegree} appears to be the only linear relations
between $Q_j^{(k)}(V)$'s and $\text{card}(V)$.
Combining the results in these equations yields
\begin{align}
\label{eq:qjdependence}
 \sum_{j=0}^{\kappa' k} (j-k) Q_j^{(k)}=0.
\end{align}
Note that equation in \eqref{eq:qjdependence} provides a non-trivial linear dependence relation for $Q_j^{(k)}$'s .

Now, suppose that $\Sigma_{Q_{in}^{(k)}}(\mathcal{X}_n)\mathbf{a}=0$ for some $(1+\kappa' k)\times 1$ real vector $\mathbf{a}$,
where $\mathcal{X}_n$ is $\mathcal{U}_n$ or $\mathcal{P}_n$.
Then, note that
\begin{align*}
0=\mathbf{a}^t \Sigma_{Q_{in}^{(k)}}(\mathcal{X}_n)\mathbf{a}=\Var \left(\sum_{j=0}^{\kappa' k} a_j Q_j^{(k)}(\mathcal{X}_n) \right),
\end{align*}
where $\mathbf{a}^t=(a_0, \dots, a_{\kappa' k})$ is the transpose of the vector $\mathbf{a}$.
So, we obtain that $\sum_{j=0}^{\kappa' k} a_j Q_j^{(k)}(\mathcal{X}_n)$ is non-random as its variance is 0.
Notice that letting $\mathbf{a}^t$ to be any scalar multiple of $(k, k-1, \cdots, 1,0,-1, \cdots ,(k-\kappa' k))$
satisfies the assumption for $\mathbf{a}$ because of the equation \eqref{eq:qjdependence}.

Recall that $\text{card}(\mathcal{U}_{n})=n$ which is non-random,
and therefore we can also take $\mathbf{a}^t=(1,1,\dots ,1)$ by \eqref{eq:qjsum}.
Thus, the rank of $\Sigma_{Q_{in}^{(k)}}(\mathcal{U}_n)$ is at most $(1+\kappa' k)-2=\kappa' k-1$.
But, on the other hand, $\text{card}(\mathcal{P}_{n})$ has a Poisson distribution with mean and variance equal to $n$.
Hence, the equation \eqref{eq:qjsum} or \eqref{eq:sumindegree} does not provide another example for $\mathbf{a}$,
and so we can only state that the rank of $\Sigma_{Q_{in}^{(k)}}(\mathcal{P}_n)$ is at most $(1+\kappa' k)-1=\kappa' k$.
We strongly believe that the upper bounds we provided for the rank of the covariance matrices are actually
equalities,
for every $n\geq k(\kappa' +1)+2$.
Furthermore, the limiting matrices $\Sigma_{\mathcal{U}}$ and $\Sigma_{\mathcal{P}}$ seem to have the same corresponding ranks.
Yet, these assertions currently remain as conjectures.

\section{Number of Shared $k$NN's}
\label{sec:shared}
In this section we study the asymptotic distribution of $Q^{(k)}$.
Recall that $Q^{(k)}=H_D$ where $D$ is the digraph with vertex set $\{1,2,3\}$ and arc set $\{(1,2),(3,2)\}$.
Thus, we can apply Theorem \ref{thm:mainllnclt} for $Q^{(k)}$.
Moreover,
the events $E_1\cap E_2\cap E_4$ and $E_1\cap E_2\cap E_5$ described in the proof of Theorem \ref{thm:llnclrqj} give
\begin{align}
P(\Delta_{Q^{(k)}} (\infty)=k^2)>0 \text{ and } P(\Delta_{Q^{(k)}} (\infty)=k^2-1)>0, \nonumber
\end{align}
respectively, and hence, we get $\Delta_{Q^{(k)}} (\infty)$ is non-degenerate.
So, by Proposition \ref{prop:mainclt}, the limiting variance values are positive, and we obtain the following result.

\begin{cor}[LLN and CLT for $Q^{(k)}$]
\label{thm:llnclrq}
There exist constants $q(d,k), \tau^2_Q:=\tau^2_Q (d,k)$ and $\sigma^2_Q:=\sigma^2_Q(d,k)$ with $0 < \tau^2 _Q\leq \sigma^2_Q$ such that as $n\rightarrow \infty$,
\begin{gather*}
n^{-1}Q^{(k)}(\mathcal{U}_n) \xrightarrow{c.m.c.c.} q(d,k) , \\
  n^{-1} \Var (Q^{(k)}(\mathcal{U}_{n})) \rightarrow \tau^2_Q,\\
n^{-1/2} ( Q^{(k)}(\mathcal{U}_{n})-\E(Q^{(k)}(\mathcal{U}_{n})) ) \xrightarrow{\mathcal{L}} \mathcal{N}(0,\tau^2_Q),\\
n^{-1} Q^{(k)}(\mathcal{P}_n) \xrightarrow{c.m.c.c.} q(d,k) , \\
  n^{-1} \Var (Q^{(k)}(\mathcal{P}_{n})) \rightarrow \sigma^2_Q,\\
n^{-1/2} ( Q^{(k)}(\mathcal{P}_{n})-\E(Q^{(k)}(\mathcal{P}_{n})) ) \xrightarrow{\mathcal{L}} \mathcal{N}(0,\sigma^2_Q).
\end{gather*}
\end{cor}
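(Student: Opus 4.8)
The plan is to read $Q^{(k)}$ as a single copy-counting functional that the Main Theorem already governs, and then to supply the one extra ingredient---non-degeneracy of the limiting add one cost---that promotes the weak inequality $0\le\tau^2_Q$ to the strict $0<\tau^2_Q$ asserted here. Since $Q^{(k)}=H_D$ for the digraph $D$ with $V(D)=\{1,2,3\}$ and $A(D)=\{(1,2),(3,2)\}$, whose underlying graph is the connected path $1$-$2$-$3$, the functional $Q^{(k)}$ is of the form covered by Theorem \ref{thm:mainllnclt} with $m=1$, $D_1=D$ and $a_1=1$. Invoking that theorem immediately produces constants, which I rename $q(d,k)$, $\tau^2_Q$ and $\sigma^2_Q$ with $0\le\tau^2_Q\le\sigma^2_Q$, for which all six displayed limits hold. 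This already yields every assertion of the corollary except the strict positivity $\tau^2_Q>0$.

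To obtain $\tau^2_Q>0$ (and hence $\sigma^2_Q>0$), I would appeal to the last clause of Proposition \ref{prop:mainclt}: it suffices to show that $\Delta_{Q^{(k)}}(\infty)$ is non-degenerate. The key tool is the double-counting identity $Q^{(k)}(V)=\sum_{v\in V}\binom{d_{in}(v)}{2}$, which lets me evaluate the add one cost as the total change in $\sum_{v}\binom{d_{in}(v)}{2}$ when the origin is inserted. I would reuse the two separated configurations built in the proof of Corollary \ref{thm:llnclrqj}, namely $E_1\cap E_2\cap E_4$ and $E_1\cap E_2\cap E_5$; on $E_1\cap E_2$ the insertion of the origin only perturbs the $k$NN structure inside $B^{\infty}_4(0)$, so the limiting add one cost is well defined and equals the change computed from the indegrees recorded there. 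Summing the increments $\binom{d_{in}'(v)}{2}-\binom{d_{in}(v)}{2}$ over the affected points and adding the origin's own contribution gives $\Delta_{Q^{(k)}}(\infty)=k^2$ on $E_1\cap E_2\cap E_4$ and $\Delta_{Q^{(k)}}(\infty)=k^2-1$ on $E_1\cap E_2\cap E_5$. Both events carry positive probability and $k^2\neq k^2-1$, so $\Delta_{Q^{(k)}}(\infty)$ is non-degenerate, and Proposition \ref{prop:mainclt} delivers $0<\tau^2_Q\le\sigma^2_Q$.

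The hard part is the bookkeeping in the second paragraph: one must correctly read off the indegrees of all points in $B^{\infty}_4(0)$ before and after inserting the origin in each configuration and verify that the binomial increments sum exactly to $k^2$ and $k^2-1$. In the first configuration, for instance, the $-\binom{k}{2}$ contributed by the point whose indegree collapses to $0$ cancels the origin's own $\binom{k}{2}$, leaving $k\cdot k=k^2$ from the $k$ points whose indegree rises by one; the second configuration collapses analogously to $(k-1)(k+1)=k^2-1$ after the offsetting increments are tallied. Once these two values are in hand the remainder is a direct citation of Theorem \ref{thm:mainllnclt} and Proposition \ref{prop:mainclt}, since the weak connectedness and finiteness of $D$ needed to apply the Main Theorem are immediate.
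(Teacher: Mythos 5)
Your proposal follows the paper's proof essentially verbatim: identify $Q^{(k)}=H_D$ for the weakly connected digraph $D$ with $V(D)=\{1,2,3\}$ and $A(D)=\{(1,2),(3,2)\}$, invoke Theorem \ref{thm:mainllnclt} for all six limits, and then establish $\tau^2_Q>0$ via Proposition \ref{prop:mainclt} by showing $P(\Delta_{Q^{(k)}}(\infty)=k^2)>0$ and $P(\Delta_{Q^{(k)}}(\infty)=k^2-1)>0$ using exactly the events $E_1\cap E_2\cap E_4$ and $E_1\cap E_2\cap E_5$ from the proof of Corollary \ref{thm:llnclrqj}. The only difference is that you spell out the binomial-increment bookkeeping $\sum_v\binom{d_{in}(v)}{2}$ that the paper leaves implicit, and your tallies $k^2$ and $k^2-1$ agree with the paper's.
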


For $d=1$, Theorem 3.2 in \cite{schiling:1986} implies $q(1,k)=k^2/2-k/4$ for each $k$.
By \cite{bahadir:2016}, we also have $\tau^2_Q(1,1)=19/240$ and $\sigma^2_Q(1,1)=17/120$.

For $d\geq 2$, we only have numerical approximations for $q(d,k)$.
For example, \cite{cuzick:1990} provides
\begin{align}
q(2,1)\approx 0.3166, q(2,2)\approx 1.58685, q(2,3)\approx 3.84845 . \label{eq:CEqapprox}
\end{align}
On the other hand, the results of \cite{schiling:1986} give
\begin{align}
q(2,1)\approx 0.315, q(2,2)\approx 1.575, q(2,3)\approx 3.82 ,\label{eq:Sqapprox}\\
q(3,1)\approx 0.355, q(3,2)\approx 1.645, q(3,3) \approx 3.93. \nonumber
\end{align}
Notice that the results in \eqref{eq:CEqapprox} and \eqref{eq:Sqapprox} slightly differ.
Monte Carlo estimations we derived for $q(2,k)$ for $k=1,2,3$ are closer to the ones in \eqref{eq:CEqapprox}.
For $k=1,\dots , 5$, exact value of $q(1,k)$ and the value of $q(2,k)$ obtained in \cite{cuzick:1990} are presented in Table \ref{tab:roundvalqdk}.

\begin{table}[t]
\center
\caption{Values of $q(d,k)$ for $d=1,2$ and $k=1,\dots ,5$.}
\label{tab:roundvalqdk}
\begin{tabular}{cccccc}
\hline \hline \\
$d$ & $k=1$ & $k=2$ & $k=3$ & $k=4$ & $k=5$  \\
\cline{1-6}\\
 1 & 0.25 & 1.5 & 3.75 & 7 & 11.25 \\
2 & 0.3166 & 1.5868 & 3.8484 & 7.1079 & 11.3667 \\
\hline
\end{tabular}
\end{table}

Furthermore, results in \cite{schiling:1986} implies
$$\lim_{d \rightarrow \infty} q(d,k)= \frac{k^2 }{2},$$
for any $k$.
Let $V=\{v_1,\dots,v_s\}$ and $d_i$ denote the indegree of $v_i$ in $kNND(V)$.
Then, it is easy to see that the number of shared $k$NN's is $\sum_{i=1}^s d_i(d_i-1)/2$.
Moreover, a double counting argument for the number of arcs in the $k$NND gives $\sum_{i=1}^s d_i=sk$.
Thus, we get $\sum_{i=1}^s d_i(d_i-1)/2=s(k^2-k)/2+\sum_{i=1}^s (d_i-k)^2/2 \geq s(k^2-k)/2$
which yields
\begin{align}
Q^{(k)}(V) \geq \frac{\text{card}(V) \cdot (k^2-k)}{2}, \label{eq:lowboundQQ}
\end{align}
for any finite point set $V$.
By \eqref{eq:lowboundQQ}, one can easily obtain
\begin{align}
q(d,k)\geq \frac{k(k-1)}{2}, \label{eq:lowerboundqdk}
\end{align}
for every $d$ and $k$.

Recall that
\begin{align*}
Q^{(k)}=\sum_{j=0}^{\kappa' k} { j \choose 2} Q_j^{(k)},
\end{align*}
and so, we have
\begin{align*}
q(d,k)=\sum_{j=0}^{\kappa' k} { j \choose 2} q_j(d,k).
\end{align*}
Then, it is easy to verify that $q(d,1)=b_2(d)/2$ where $b_2(d)$ is as defined in Section \ref{subsec:cltforqjk}.
Yet, for $d\geq 2$, we only have approximation of $b_2(d)$ based on Monte Carlo simulations.

\section{Number of Reflexive $k$NN's}
\label{sec:reflexive}
In this section, we study the asymptotic behavior of $R^{(k)}$.

\begin{cor}[LLN and CLT for $R^{(k)}$]
\label{thm:llnclrq}
There exist constants $r(d,k), \tau^2_R:=\tau^2_R (d,k)$ and $\sigma^2_R:=\sigma^2_R(d,k)$ with $0 < \tau^2 _R\leq \sigma^2_R$ such that as $n\rightarrow \infty$,
\begin{gather*}
n^{-1}R^{(k)}(\mathcal{U}_n) \xrightarrow{c.m.c.c.} r(d,k) , \\
  n^{-1} \Var (R^{(k)}(\mathcal{U}_{n})) \rightarrow \tau^2_R,\\
n^{-1/2} ( R^{(k)}(\mathcal{U}_{n})-\E(R^{(k)}(\mathcal{U}_{n})) ) \xrightarrow{\mathcal{L}} \mathcal{N}(0,\tau^2_R),\\
n^{-1} R^{(k)}(\mathcal{P}_n) \xrightarrow{c.m.c.c.} r(d,k) , \\
  n^{-1} \Var (R^{(k)}(\mathcal{P}_{n})) \rightarrow \sigma^2_R,\\
n^{-1/2} ( R^{(k)}(\mathcal{P}_{n})-\E(R^{(k)}(\mathcal{P}_{n})) ) \xrightarrow{\mathcal{L}} \mathcal{N}(0,\sigma^2_R).
\end{gather*}
\end{cor}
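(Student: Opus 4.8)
The plan is to recognize $R^{(k)}$ as a single instance of the functional $H_D$ and feed it directly into the machinery already assembled for the Main Theorem, so that only the strict positivity of the limiting variances remains to be checked by hand.

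First I would recall from Section \ref{sec:prelim} that $R^{(k)}=H_D$, where $D$ is the weakly connected digraph on $\{1,2\}$ with $A(D)=\{(1,2),(2,1)\}$. Since $D$ is finite and weakly connected, Theorem \ref{thm:mainllnclt} applies verbatim with $m=1$, $D_1=D$ and $a_1=1$, and produces all six displayed limits together with constants $r(d,k):=\xi$, $\tau_R^2:=\tau^2$ and $\sigma_R^2:=\sigma^2$ satisfying $0\le \tau_R^2\le \sigma_R^2$ and independent of $B_0$. Thus the whole statement, apart from the strict inequality $0<\tau_R^2$, is immediate.

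Next, to upgrade $0\le\tau_R^2$ to $0<\tau_R^2$ (which by $\tau_R^2\le\sigma_R^2$ also yields $\sigma_R^2>0$), I would invoke the final sentence of Proposition \ref{prop:mainclt}: it suffices to show that $\Delta_{R^{(k)}}(\infty)$ is non-degenerate, i.e. that its law is not concentrated at a single point. Following the proof of Corollary \ref{thm:llnclrqj}, I would work inside the isolating events $E_1$ and $E_2$, which guarantee that inserting the origin can change $kNND$ only among the points of $B^{\infty}_4(0)$, so that the add one cost is governed by a purely local configuration there. The task is then to exhibit two local configurations, each of positive probability, that produce two distinct values of $\Delta_{R^{(k)}}$.

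The delicate point — and the step I expect to be the main obstacle — is that the configurations $E_4$ and $E_5$ already built for the $Q_j^{(k)}$ argument are useless here: in each of them the reflexive pairs newly created with the origin are exactly balanced by those destroyed among the old points, so both give $\Delta_{R^{(k)}}=0$. One must therefore design a genuinely new local configuration in which this cancellation fails. I would place a point $w$ together with a tight cluster of $k-1$ points very close to the origin, so that after insertion $0$ becomes a mutual $k$NN of all $k$ of them, and a further point $x$ shielded by its own tight cluster of $k$ companions placed farther out along the same ray, arranged so that before insertion the near group's $k$-th neighbours reach out to $x$ while $x$'s own $k$NN lie entirely within its companion cluster (hence those outward arcs are \emph{not} reflexive). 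Insertion of $0$ then deletes precisely these non-reflexive arcs and creates $k$ reflexive pairs with $0$; a direct arc count gives $\Delta_{R^{(k)}}=k>0$. Since such a configuration has positive probability under $E_1\cap E_2$, combining it with an $E_4$-type configuration giving $\Delta_{R^{(k)}}=0$ shows that $\Delta_{R^{(k)}}(\infty)$ is non-degenerate. By Proposition \ref{prop:mainclt} this forces $\tau_R^2>0$, and hence $\sigma_R^2>0$, completing the proof.
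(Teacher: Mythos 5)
Your proposal is correct and follows essentially the same route as the paper: identify $R^{(k)}$ with $H_D$ for the two-vertex digraph with arc set $\{(1,2),(2,1)\}$, invoke Theorem \ref{thm:mainllnclt}, and then obtain $\tau^2_R>0$ via Proposition \ref{prop:mainclt} by exhibiting two positive-probability local configurations with distinct add one costs, using the events $E_1\cap E_2\cap E_4$ and $E_1\cap E_2\cap E_5$ to get $P(\Delta_{R^{(k)}}(\infty)=0)>0$. The only divergence is in the choice of the second configuration: for the nonzero increment the paper simply reuses the event $E_3$ from the proof of Corollary \ref{thm:llnclrqj} with $s_1=s_2=k$ and $s_i=0$ for $i\geq 3$, whereas you construct a fresh two-cluster arrangement; both yield $P(\Delta_{R^{(k)}}(\infty)\neq 0)>0$ and hence non-degeneracy.
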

\begin{proof}
Recall that $R^{(k)}=H_D$ where $D$ is the digraph with vertex set $\{1,2\}$ and arc set $\{(1,2),(2,1)\}$, and therefore,
Theorem \ref{thm:mainllnclt} yields the asymptotic results.

Furthermore, we also show that $\Delta_R(\infty)$ is non-degenerate which implies $0 < \tau_R^2 $ and $0 < \sigma_R^2$.
Both of the events $E_1\cap E_2\cap E_4$ and $E_1\cap E_2\cap E_5$ defined in the proof of Theorem \ref{thm:llnclrqj} yield
\begin{align}\label{eq:deltaR1}
 P(\Delta_{R^{(k)}} (\infty)=0)>0.
\end{align}
Letting $s_1=s_2=k$ and $s_i=0$ for all $i\geq 3$ for the event $E_3$ in the proof of Theorem \ref{thm:llnclrqj} gives
\begin{align}\label{eq:deltaR2}
 P(\Delta_{R^{(k)}} (\infty)=k)>0.
\end{align}
Then by \eqref{eq:deltaR1} and \eqref{eq:deltaR2} we see that $\Delta_{R^{(k)}} (\infty)$ is not degenerate.
\end{proof}

By the results in \cite{henze:1987}, \cite{pickard:1982} and \cite{schiling:1986}, we obtain
\begin{align*}
r(d,k)=\sum_{s=1}^k \sum_{t=1}^k r_d(s,t),
\end{align*}
where
\begin{align*}
r_d(s,t)=\frac{\omega(d)}{2} \sum_{i=0}^{\min \{s-1,t-1\}} \frac{(s+t-i-2)!}{i! (s-i-1)! (t-i-1)!} (2\omega(d)-1)^i
(1-\omega(d))^{s+t-2i-2}
\end{align*}
and
\[
 \omega(d) =
  \begin{cases}
   \displaystyle \left[ \frac{3}{2}+\frac{1}{2}\sum_{i=1}^m \frac{1\cdot 3  \cdots   (2i-1)}{2\cdot 4 \cdots (2i)} \left(\frac{3}{4} \right)^i \right ]^{-1} & \text{if } d=2m+1, \\
   \displaystyle \left[ \frac{4}{3}+\frac{\sqrt{3}}{2\pi} \left (1+\sum_{i=1}^{m-1} \frac{2\cdot 4  \cdots   (2i)}{3\cdot 5 \cdots  (2i+1)} \left(\frac{3}{4} \right)^i \right) \right ]^{-1} & \text{if } d=2m.
  \end{cases}
\]
From a geometric point of view, $\omega(d)$ is the volume of a unit sphere in $\mathbb{R}^d$ divided by the volume of the
union of two such spheres whose centres are separated by a distance 1.
The rounded values of $r(d,k)$ for $d=1,2$ and $k=1,\dots ,5$ are presented in Table \ref{tab:roundvalrdk}.
\begin{table}[t]
\center
\caption{Values of $r(d,k)$ for $d=1,2$ and $k=1,\dots ,5$.}
\label{tab:roundvalrdk}
\begin{tabular}{cccccc}
\hline \hline \\
$d$ & $k=1$ & $k=2$ & $k=3$ & $k=4$ & $k=5$  \\
\cline{1-6}\\
 1 & 0.3333 & 0.7407 & 1.1728 & 1.6168 & 2.0680 \\
 2 & 0.3107 & 0.7105 & 1.1365 & 1.5751 & 2.0215 \\
 \hline
\end{tabular}
\end{table}

However, we have almost nothing about the exact values of limiting variances $\tau^2_R$ and $\sigma^2_R$.
We only have $\tau^2_R (1,1)=2/45$ and $\sigma^2_R(1,1)=7/45$ by the results in \cite{bahadir:2016}.

Finally, we present an interesting connection between reflexive pairs and components of a $1$NN digraph which is also stated in \cite{eppstein:1997} and \cite{enns:1999}.
\begin{prop}
Let $V$ be a finite point set including at least two points such that pairwise distances between the points of $V$ are all distinct.
Then, each weakly connected component of 1NN digraph of $V$ contains exactly one reflexive pair and hence,
number of weakly connected components of $1NND(V)$ is $R{[1]}(V)$.
\end{prop}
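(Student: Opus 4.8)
The plan is to prove the two halves separately: every weakly connected component of $1NND(V)$ contains \emph{at least} one reflexive pair, and \emph{at most} one. Combining these gives exactly one reflexive pair per component; since each reflexive pair lies in a single component, the reflexive pairs are then in bijection with the components, so their total count $R^{(1)}(V)$ equals the number of components.

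For existence, I would fix an arbitrary vertex $v_0$ of a component and follow the outgoing arcs, letting $v_{i+1}$ be the unique element of $1NN(v_i)$, to obtain a walk $v_0\to v_1\to v_2\to\cdots$. Writing $d_i=\|v_i-v_{i+1}\|$, the point $v_i$ is a candidate neighbor of $v_{i+1}$, so $d_{i+1}\le d_i$; because all pairwise distances are distinct, this inequality is \emph{strict} unless $v_{i+2}=v_i$. Since $V$ is finite the distances cannot strictly decrease forever, so $v_{i+2}=v_i$ for some $i$, which means $v_{i+1}\in 1NN(v_i)$ and $v_i\in 1NN(v_{i+1})$; i.e.\ $\{v_i,v_{i+1}\}$ is a reflexive pair lying in the component. (The chain is well defined because every vertex, having $\text{card}(V)\ge 2$, possesses an out-arc to a distinct vertex of the same component, so each component has at least two vertices.)

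For uniqueness I would use an edge count on the underlying graph. A weakly connected component $C$ with $m$ vertices has exactly $m$ arcs, since every vertex of $1NND(V)$ has out-degree $1$ and all its arcs join vertices of $C$. Passing to the underlying graph, each reflexive pair contributes two arcs but only a single (undirected) edge, whereas every other arc contributes its own distinct edge; hence if $C$ contains $R_C$ reflexive pairs, its underlying graph has exactly $m-R_C$ edges. A connected graph on $m$ vertices has at least $m-1$ edges, so $m-R_C\ge m-1$, giving $R_C\le 1$. Together with the existence bound $R_C\ge 1$ this forces $R_C=1$ for every component, and the proposition follows.

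The only step requiring genuine care is the existence argument, and that is where I expect the main obstacle to lie: one must argue rigorously that the nearest-neighbor chain terminates in a reflexive pair rather than entering a longer cycle. This is exactly where the hypothesis of distinct pairwise distances is indispensable, as it forces the strict decrease $d_{i+1}<d_i$ until a return $v_{i+2}=v_i$ occurs; combined with finiteness of $V$, this rules out any cycle of length greater than two and guarantees termination at a mutual nearest-neighbor pair.
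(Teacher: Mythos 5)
Your proof is correct, but it takes a genuinely different route from the paper's in both halves. For existence, the paper picks the \emph{shortest arc} $(u,v)$ in the component (a global extremal choice) and shows directly that the NN of $v$ must be $u$; you instead run the nearest-neighbor chain $v_{i+1}\in 1NN(v_i)$ and argue by strict descent of the distances $d_i$ until an immediate return $v_{i+2}=v_i$ occurs. The two are close in spirit (the chain necessarily terminates at the minimum-length arc), and both use distinctness of pairwise distances in the same essential way, but yours is a termination argument rather than an extremal one. For uniqueness the difference is more substantial: the paper takes a shortest path in the underlying graph between two putative reflexive pairs and propagates the NN relation along it vertex by vertex to reach a contradiction with minimality, whereas you count -- each component on $m$ vertices carries exactly $m$ arcs (out-degree $1$, and no arc leaves a weak component), the underlying graph has $m-R_C$ edges, and connectivity forces $m-R_C\geq m-1$, i.e.\ $R_C\leq 1$. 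Your counting argument is shorter and more structural (it also shows in passing that each component's underlying graph is a tree), while the paper's path argument is more elementary in that it uses no graph-theoretic edge bound. All the steps you state check out: the $2$-to-$1$ collapse of arcs onto undirected edges happens exactly at reflexive pairs, and the out-degree-$1$ count of arcs per component is valid since $\mathrm{card}(V)\geq 2$.
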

\begin{proof}
Construct the $1$NN digraph of $V$.
We first show that any weakly connected component of $1NND(V)$ contains at least one reflexive pair.
Note that any weakly connected component contains at least two points and therefore,
it also contains at least one arc.
Pick any component and the shortest arc in it
(Here by \emph{length} of an arc we refer to the distance between the endpoints of the arc, and the shortest arc is the one with the minimum length.).
As there are finitely many arcs in the component, the shortest arc exists, say $(u,v)$.
Then, by definition we see that $NN$ of $u$ is $v$.
Let $NN$ of $v$ be $w$.
Clearly the arc $(v,w)$ belongs to this component and hence,
we get $\|v-w\| \geq \|v-u\|$ by the choice of $(u,v)$.
On the other hand, considering the definition of NN for $v$ yields $\|v-w\| \leq \|v-u\|$.
Thus, by the uniqueness of the NN of any point we obtain that $w=u$ which implies that $\{u,v\}$ is a reflexive pair in this component.

Next suppose that a component contains two reflexive pairs $\{u,v\}$ and $\{w,z\}$.
As each point has a unique NN,
we have $\{u,v\}\cap \{w,z\}=\emptyset$.
In the underlying graph of $1NND(V)$,
consider a shortest path with one end point from $\{u,v\}$ and the other one from $\{w,z\}$,
say $v_1, \dots, v_s$.
Since the edges corresponding to the reflexive pairs appear in the same component,
such paths exist and as the graph is finite, a shortest one is well defined.
Without loss of generality,
we may assume that $v_1=v$ and $v_s=z$.
Note that by the choice of the path we have $v_2\neq u$.
Moreover, since we have the edge $v_1v_2$ in the underlying graph,
we see that 1NN of $v_1$ is $v_2$ or 1NN of $v_2$ is $v_1$.
On the other hand, we have $v_1=v$ and 1NN of $v$ is $u \neq v_2$, and hence,
we obtain that 1NN of $v_2$ is $v_1$.
In the same manner, we see that one of $v_2$ and $v_3$ is the 1NN of the other one, and therefore,
we get 1NN of $v_3$ is $v_2$ since 1NN of $v_2$ is $v_1$ and $v_1\neq v_3$.
By induction on the indices of $v_i$'s we obtain that the 1NN of $v_s$ is $v_{s-1}$, that is, $v_{s-1}$ is the 1NN of $z$. Then, the uniqueness of 1NN implies $w=v_{s-1}$ which contradicts the choice of the path.
Thus, each component has exactly one reflexive pair and therefore, the result follows.
\end{proof}

\section{The Case of $k=1$}
\label{sec:depRQQj}
In this section, we study the case $k=1$ where
we can show that $R^{(1)}(\mathcal{U}_n)$, $Q^{(1)}(\mathcal{U}_n)$ and $Q_j^{(1)}(\mathcal{U}_n)$ are pairwise dependent.
For simplicity in the notation, let $R$, $Q$ and $Q_j$ denote
$R^{(1)}(\mathcal{U}_n)$, $Q^{(1)}(\mathcal{U}_n)$ and $Q_j^{(1)}(\mathcal{U}_n)$, respectively.

We first show that $R$ and $Q_j$'s are pairwise stochastically dependent for large $n$.
We use the simple fact that if the random variables $X$ and $Y$ satisfy $P(X\in A)>0$, $ P(Y\in B)>0$ and $P(X\in A,\ Y\in B)=0$ for some Borel sets $A$ and $B$, then $X$ and $Y$ are dependent.
Also recall that for $n\geq 4$,
if the sample points $x$ and $y$ are sufficiently close to each other and far from the other $n-2$ sample points,
then $\{x,y\}$ forms a reflexive pair with each of indegree 1.
In addition, for $n\geq 5$,
if the sample points $x,y,z$ are sufficiently close to each other and far from the remaining $n-3$ sample points,
then the indegrees of $x,y,z$ in the $NND$ are $0,1,2$ in some order.

\begin{prop}
\label{thm:RdepQj}
For every $n\geq \max \{9, \kappa' +3\}$, $R$ and $Q_j$ are dependent for all
$0\leq j \leq \kappa' $.
Moreover, $R$ and $Q$ are dependent for every $n\geq 6$.
\end{prop}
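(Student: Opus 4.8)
The plan is to apply the elementary dependence criterion recalled just before the statement: if $P(X\in A)>0$, $P(Y\in B)>0$ and $P(X\in A,\,Y\in B)=0$ for some Borel sets $A,B$, then $X$ and $Y$ are dependent. For each pair I would exhibit one event constraining $R$ and one constraining $Q_j$ (resp. $Q$) whose intersection is empty while each has positive probability.

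The event on $R$ I would use throughout is $A=\{R=\lfloor n/2\rfloor\}$. Recall from the preceding proposition that $R$ equals the number of weakly connected components of $1NND(\mathcal{U}_n)$, and every component has at least two vertices; hence $R\le\lfloor n/2\rfloor$, with equality forcing every component to have size $2$ except, when $n$ is odd, a single component of size $3$. A size-$2$ component is a reflexive pair with both indegrees $1$, and a size-$3$ component always has indegrees $0,1,2$. Thus on $A$ the whole indegree sequence is determined: for even $n$ one has $Q_1=n$ and $Q_j=0$ for $j\ne1$, and for odd $n$ one has $Q_0=1,\,Q_1=n-2,\,Q_2=1$ and $Q_j=0$ for $j\ge3$; likewise $Q=\sum_v\binom{d_{in}(v)}{2}$ equals $0$ for even $n$ and $1$ for odd $n$ on $A$. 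A positive-probability realization of $A$ is obtained by placing $\lfloor n/2\rfloor$ tight clusters (pairs, plus one triple when $n$ is odd) far apart from one another, so $P(A)>0$.

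It then remains to produce, for each $j$, a positive-probability event $B_j$ on $Q_j$ that excludes the value $c_j$ forced by $A$. When $c_j=0$ (all $j\ne1$ for even $n$, and all $j\ge3$ for odd $n$) I would take $B_j=\{Q_j\ge1\}$ and invoke Lemma \ref{lem:qjtreshold}, which gives $P(Q_j\ge1)>0$ for $0\le j\le\kappa'$ once $n\ge\kappa'+3$. For $j=1$ with even $n$, note that $\{Q_0\ge1\}\subseteq\{Q_1\le n-1\}$, so $B_1=\{Q_1<n\}$ has positive probability again by Lemma \ref{lem:qjtreshold}. For the $Q$-statement I would take $B=\{Q\ge2\}$; since $Q\le1$ on $A$, it suffices that two disjoint triples (each producing a vertex of indegree $2$), with the remaining points placed far away, occur, which has positive probability once $n\ge6$.

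The genuinely delicate cases are $j\in\{0,1,2\}$ for odd $n$, where $c_j\ne0$. Here I would use the single configuration consisting of two well-separated triples together with the remaining $n-6\ge3$ points gathered in one far cluster (this needs $n\ge9$, which accounts for the threshold). This configuration contributes two indegree-$0$ and two indegree-$2$ vertices, so $Q_0\ge2$ and $Q_2\ge2$, both different from $c_0=c_2=1$; moreover the two triples supply only two indegree-$1$ vertices and the far cluster contributes at most its own size $n-6$, whence $Q_1\le n-4<n-2=c_1$. Thus one positive-probability event simultaneously separates $Q_0,Q_1,Q_2$ from their forced values. The main obstacle is exactly this odd-$n$ bookkeeping: one must verify that the auxiliary far cluster can be made self-contained so as not to perturb the two triples, and that its uncontrolled internal structure can only reinforce the strict inequalities rather than requiring a fresh construction for each of $j=0,1,2$.
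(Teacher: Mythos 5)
Your proposal is correct and follows essentially the same route as the paper: both condition on the event $\{R=\lfloor n/2\rfloor\}$, observe that it forces the entire indegree profile (hence pins down every $Q_j$ and $Q$), and then separate via Lemma \ref{lem:qjtreshold} together with explicit well-separated cluster configurations of positive probability. The only material difference is in the odd-$n$ cases $j\in\{0,1,2\}$, where you use two triples plus one large far cluster (yielding the inequalities $Q_0\ge 2$, $Q_2\ge 2$ and $Q_1\le n-4$) while the paper uses three triples plus $(n-9)/2$ pairs to obtain exact values; both constructions require $n\ge 9$ and work equally well.
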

\begin{proof}
We first prove the dependence of $R$ and $Q_j$'s.
The proof is based on the parity of $n$.
First assume that $n$ is an even positive integer.
It is easy to see that $R=n/2$ if and only if sample points consist of $n/2$ pairs which are pairwise far enough
(i.e., members of each pair is NN to each other and each pair is sufficiently far from other such pairs).
In this case, each point is of indegree 1.
Therefore, we have
\begin{align}\label{eq:RdepQj}
\left \{R=n/2  \right \} \subseteq \{Q_1=n\} \cap \left( \bigcap_{\substack{0\leq j \leq \kappa' \\ j\neq 1}} \{Q_j=0\} \right).
\end{align}
The event that each of $n/2$ sufficiently small balls in the region contains exactly 2 sample points has a positive probability and therefore,
$P(R=n/2)>0$ (See Figure \ref{fig:Rcases} (a)).
Moreover, by Lemma \ref{lem:qjtreshold} we have $P(Q_j\geq 1)>0$ for every $0\leq j\leq \kappa' $.
By \eqref{eq:RdepQj} we have $\{R=n/2\}\cap \{Q_j\geq 1\}=\emptyset$
for each $0\leq j\neq 1 \leq \kappa' $,
and hence, we obtain that $R$ and $Q_j$ are dependent for every $0\leq j\neq 1 \leq \kappa' $.

Next, note that as $\sum_{j=0}^{\kappa' } Q_j=n$,
we have $\{Q_0>0\}\subseteq \{Q_1<n\}$.
We know that $P(Q_0>0)>0$ by Lemma \ref{lem:qjtreshold}, and therefore $P(Q_1<n)>0$.
Moreover, the result in \eqref{eq:RdepQj} gives $\{R=n/2\}\cap \{Q_1<n\}=\emptyset$,
and thus, $R$ and $Q_1$ are dependent as well.

\begin{figure}
\centering
\scalebox{.8}{
\begin{tikzpicture}[line cap=round,line join=round,>=triangle 45,x=1.0cm,y=1.0cm]
\clip(0.6049778637383369,0.4011697885486583) rectangle (14.177954283749214,5.154109374482612);
\draw (1.,5.)-- (5.,5.);
\draw (5.,5.)-- (5.,1.);
\draw (5.,1.)-- (1.,1.);
\draw (1.,1.)-- (1.,5.);
\draw (5.5,5.)-- (9.5,5.);
\draw (9.5,5.)-- (9.5,1.);
\draw (9.5,1.)-- (5.5,1.);
\draw (5.5,1.)-- (5.5,5.);
\draw (10.,5.)-- (14.,5.);
\draw (14.,1.)-- (14.,5.);
\draw (14.,1.)-- (10.,1.);
\draw (10.,1.)-- (10.,5.);
\draw(1.4229419717426361,4.38008626048973) circle (0.3cm);
\draw(3.190801410348735,4.232764640605889) circle (0.3cm);
\draw(4.418481576047415,4.318702252204796) circle (0.3cm);
\draw(3.804641493198075,3.3488349213028408) circle (0.3cm);
\draw(3.4854446501164182,2.452628400342806) circle (0.3cm);
\draw(4.553526394274269,2.403521193714859) circle (0.3cm);
\draw(3.595935865029299,1.4827610694408504) circle (0.3cm);
\draw(2.331425294359659,2.2561995738310174) circle (0.3cm);
\draw(1.9017372363651213,1.4827610694408504) circle (0.3cm);
\draw(1.3247275584867428,2.5841655609533793) circle (0.3cm);
\draw(2.073027849960224,3.519540925295229) circle (0.3cm);
\draw(6.165295068955826,4.548453826071264) circle (0.3cm);
\draw(7.100670433297678,3.706615998163599) circle (0.3cm);
\draw(8.012661413530983,4.396455329365713) circle (0.3cm);
\draw(8.936344585818562,3.6715394220007798) circle (0.3cm);
\draw(8.,3.) circle (0.3cm);
\draw(7.7671253803912474,2.280168567542278) circle (0.3cm);
\draw(8.737577320895918,1.80078869331708) circle (0.3cm);
\draw(7.276053314111775,1.3564853952547014) circle (0.3cm);
\draw(6.773289055778029,2.3386295278136435) circle (0.3cm);
\draw(5.896374651707544,2.853085978201661) circle (0.3cm);
\draw(5.861298075544724,1.5552526601773444) circle (0.3cm);
\draw(10.503098321091164,4.291225600877255) circle (0.3cm);
\draw(10.81878750655654,3.215543931884128) circle (0.3cm);
\draw(10.491406129036891,1.508483891960252) circle (0.3cm);
\draw(12.408925625937687,1.812480885371353) circle (0.3cm);
\draw(11.648933142409932,2.4321670642478286) circle (0.3cm);
\draw(12.572616314697513,3.624770653783687) circle (0.3cm);
\draw(11.894469175549668,4.349686561148621) circle (0.3cm);
\draw(13.554760447256456,4.396455329365713) circle (0.3cm);
\draw(13.484607294930818,2.4906280245191943) circle (0.3cm);
\draw(13.589837023419276,1.4500229316888862) circle (0.3cm);
\draw (2.6,1.01) node[anchor=north west] {$(a)$};
\draw (7.1,1.01) node[anchor=north west] {$(b)$};
\draw (11.6,1.01) node[anchor=north west] {$(c)$};
\begin{scriptsize}
\draw [fill=black] (1.3481119425952892,4.408147521419987) circle (1.5pt);
\draw [fill=black] (1.5351870154636595,4.291225600877255) circle (1.5pt);
\draw [fill=black] (3.090248558681988,4.23276464060589) circle (1.5pt);
\draw [fill=black] (3.2656314394960853,4.244456832660163) circle (1.5pt);
\draw [fill=black] (4.3179287243806685,4.337994369094347) circle (1.5pt);
\draw [fill=black] (4.505003797249039,4.291225600877255) circle (1.5pt);
\draw [fill=black] (3.8268566581011965,3.238928315992674) circle (1.5pt);
\draw [fill=black] (3.6982425455041916,3.3792346206439516) circle (1.5pt);
\draw [fill=black] (2.213334154611502,3.5663096935123213) circle (1.5pt);
\draw [fill=black] (1.9210293532546734,3.4376955809153174) circle (1.5pt);
\draw [fill=black] (1.2896509823239235,2.666010905333291) circle (1.5pt);
\draw [fill=black] (1.4533416710837475,2.5257046006820136) circle (1.5pt);
\draw [fill=black] (2.295179498991414,2.1398622628910005) circle (1.5pt);
\draw [fill=black] (2.4471779956969653,2.3853982960307363) circle (1.5pt);
\draw [fill=black] (2.061335657905951,1.5201760840145249) circle (1.5pt);
\draw [fill=black] (1.8625683929833077,1.438330739634613) circle (1.5pt);
\draw [fill=black] (3.499475280581548,1.4617151237431594) circle (1.5pt);
\draw [fill=black] (3.6631659693413723,1.3915619714175207) circle (1.5pt);
\draw [fill=black] (3.534551856744368,2.4321670642478286) circle (1.5pt);
\draw [fill=black] (3.3474767838759973,2.3970904880850092) circle (1.5pt);
\draw [fill=black] (4.341313108489215,2.3853982960307363) circle (1.5pt);
\draw [fill=black] (4.645310101900317,2.280168567542278) circle (1.5pt);
\draw [fill=black] (6.270524797444284,4.618606978396903) circle (1.5pt);
\draw [fill=black] (6.1536028769015525,4.431531905528533) circle (1.5pt);
\draw [fill=black] (6.901903168375034,3.7533847663806914) circle (1.5pt);
\draw [fill=black] (7.1708235856233165,3.6715394220007798) circle (1.5pt);
\draw [fill=black] (7.930816069151071,4.326302177040074) circle (1.5pt);
\draw [fill=black] (8.211428678453627,4.361378753202894) circle (1.5pt);
\draw [fill=black] (8.85449924143865,3.613078461729414) circle (1.5pt);
\draw [fill=black] (9.123419658686933,3.624770653783687) circle (1.5pt);
\draw [fill=black] (8.176352102290807,2.9349313225815727) circle (1.5pt);
\draw [fill=black] (7.837278532716886,3.0986220113413965) circle (1.5pt);
\draw [fill=black] (6.7615968637237565,2.268476375488005) circle (1.5pt);
\draw [fill=black] (6.574521790855386,2.3970904880850092) circle (1.5pt);
\draw [fill=black] (6.024988764304548,2.8180094020388418) circle (1.5pt);
\draw [fill=black] (5.779452731164812,2.771240633821749) circle (1.5pt);
\draw [fill=black] (5.6976073867849,1.508483891960252) circle (1.5pt);
\draw [fill=black] (5.9431434199246365,1.4850995078517055) circle (1.5pt);
\draw [fill=black] (7.276053314111775,1.2863322429290625) circle (1.5pt);
\draw [fill=black] (7.252668930003229,1.5435604681230712) circle (1.5pt);
\draw [fill=black] (7.77881757244552,2.1515544549452734) circle (1.5pt);
\draw [fill=black] (7.7671253803912474,2.4321670642478286) circle (1.5pt);
\draw [fill=black] (8.796038281167284,1.8943262297512649) circle (1.5pt);
\draw [fill=black] (8.74926951295019,1.707251156882895) circle (1.5pt);
\draw [fill=black] (8.562194440081822,1.8358652694798994) circle (1.5pt);
\draw [fill=black] (10.760326546285173,3.157082971612762) circle (1.5pt);
\draw [fill=black] (10.90063285093645,3.4026190047524976) circle (1.5pt);
\draw [fill=black] (10.643404625742441,4.314609984985801) circle (1.5pt);
\draw [fill=black] (10.409560784656978,4.256149024714436) circle (1.5pt);
\draw [fill=black] (11.730778486789845,4.291225600877255) circle (1.5pt);
\draw [fill=black] (12.023083288146674,4.2678412167687085) circle (1.5pt);
\draw [fill=black] (13.589837023419276,4.221072448551617) circle (1.5pt);
\draw [fill=black] (13.426146334659451,4.349686561148621) circle (1.5pt);
\draw [fill=black] (12.677846043185971,3.6832316140550527) circle (1.5pt);
\draw [fill=black] (12.572616314697513,3.4844643491324097) circle (1.5pt);
\draw [fill=black] (11.613856566247113,2.5958577530076523) circle (1.5pt);
\draw [fill=black] (11.800931639115483,2.408782680139282) circle (1.5pt);
\draw [fill=black] (11.578779990084293,2.3386295278136435) circle (1.5pt);
\draw [fill=black] (10.52648270519971,1.4967916999059787) circle (1.5pt);
\draw [fill=black] (10.468021744928343,1.2980244349833356) circle (1.5pt);
\draw [fill=black] (12.315388089503502,1.6487901966115293) circle (1.5pt);
\draw [fill=black] (12.619385082914604,1.8241730774256262) circle (1.5pt);
\draw [fill=black] (12.221850553069318,1.8358652694798994) circle (1.5pt);
\draw [fill=black] (13.344300990279539,2.4204748721935556) circle (1.5pt);
\draw [fill=black] (13.504300990279539,2.4604748721935556) circle (1.5pt);
\draw [fill=black] (13.659990175744914,2.4438592563021015) circle (1.5pt);
\draw [fill=black] (13.60152921547355,1.6487901966115293) circle (1.5pt);
\draw [fill=black] (13.636605791636368,1.4149463555260668) circle (1.5pt);
\end{scriptsize}
\end{tikzpicture}}
\caption{An illustration of $n$ points with (a) $n/2$ reflexive pairs for $n=22$, (b) $(n-1)/2$ reflexive pairs for $n=23$, and (c) $(n-3)/2$ reflexive pairs for $n=23$.}
\label{fig:Rcases}
\end{figure}
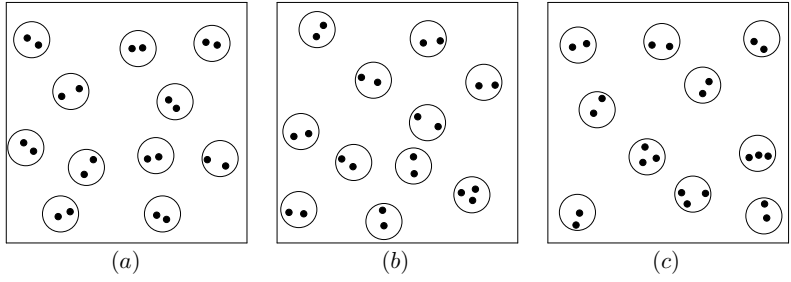

Now suppose that $n$ is an odd positive integer.
The event that each of $(n-3)/2$ sufficiently small balls in the region contains exactly 2 sample points and a sufficiently small ball contains exactly 3 sample points has a positive probability,
and hence $P(R=(n-1)/2)>0$ (See Figure \ref{fig:Rcases} (b)).
Moreover, it is easy to verify that $R=(n-1)/2$ implies
\begin{align*}
Q_0=1, Q_1=n-2,
Q_2=1 \  {\rm and} \ Q_j=0 \  {\rm for\ all}\  3\leq j\leq \kappa' .
\end{align*}
In other words, we have
\begin{align}\label{eq:RdepQj2}
\left \{R=(n-1)/2  \right \} \subseteq \{Q_0=Q_2=1\}\cap \{Q_1=n-2\}
\cap \left( \bigcap_{j=3}^{\kappa' } \{Q_j=0\} \right).
\end{align}
So, by \eqref{eq:RdepQj2} we have $\{R=(n-1)/2\}\cap\{Q_j\geq 1\}=\emptyset$
for all $3\leq j\leq \kappa' $.
Then since $P(Q_j\geq 1)>0$ by Lemma \ref{lem:qjtreshold} and $P(R=(n-1)/2)>0$,
we obtain that $R$ and $Q_j$ are dependent for each $3\leq j\leq \kappa' $.

Furthermore, the event that each of $(n-9)/2$ sufficiently small balls in the region contains exactly 2 sample points and
each of three sufficiently small balls contains exactly 3 sample points has a positive probability (See Figure \ref{fig:Rcases} (c)), and in this case we have
\begin{align}\label{eq:RdepQj3}
R=(n-3)/2, Q_0=3, Q_1=n-6 \  {\rm and}\  Q_2=3.
\end{align}
Therefore, each one of $P(Q_0=3), P(Q_1=n-6)$ and $P(Q_2=3)$ is positive.
By the results in \eqref{eq:RdepQj2} and \eqref{eq:RdepQj3},
we have $\{R=(n-1)/2\}\cap \{Q_0=3\}=\emptyset$,
$\{R=(n-1)/2\}\cap \{Q_1=n-6\}=\emptyset$ and
$\{R=(n-1)/2\}\cap \{Q_2=3\}=\emptyset$.
Hence, we see that $R$ and $Q_j$ are dependent also for every $0\leq j\leq 2$.

Finally, we prove that $R$ and $Q$ are dependent for each $n\geq 6$.
Since $Q=\sum_{j\geq 0} j(j-1)Q_j/2$, by \eqref{eq:RdepQj} we obtain
\begin{align}
\{R=n/2 \}\subseteq \{Q=0\} \label{eq:RdepQ1}
\end{align} when $n$ is even, and
by \eqref{eq:RdepQj2} we have
\begin{align}
\{R=(n-1)/2\}\subseteq \{Q=1\} \label{eq:RdepQ2}
\end{align}
when $n$ is odd.
Clearly, both $R$ and $Q$ always attain nonnegative integer values and $R\leq n/2$ since each point has a unique $NN$.
Hence, \eqref{eq:RdepQ1} and \eqref{eq:RdepQ2} imply
\begin{align}
\{R\geq (n-1)/2\} \subseteq \{Q\leq 1\}.  \label{eq:RdepQ3}
\end{align}
On the other hand, consider the event that each of two sufficiently small balls contains three sample points and remaining $n-6$ sample points are far enough from these two balls.
In this case, the indegrees of the points in each one of the small balls are $0,1$ and 2, and thus, $Q$ is at least 2.
Since such an event occurs with a positive probability, we obtain $P(Q\geq 2)>0$.
Moreover, $P(R\geq (n-1)/2)>0$ and by \eqref{eq:RdepQ3} we have $P(R\geq (n-1)/2, Q\geq 2)=0$.
Therefore, $R$ and $Q$ are dependent as well.
\end{proof}

\begin{remark}
In the proof of Proposition \ref{thm:RdepQj} we use Lemma \ref{lem:qjtreshold} for $k=1$ and
we also need $n$ to be at least 9 to have $(n-9)/2$ balls in the case of odd $n$.
Thus, the lower bound for $n$ is $\max \{9, \kappa' +3\}$.
Note that this lower bound is $\kappa' +3$ for $d\geq 3$, and it is 5 for $d=1$ and 8 for $d=2$.
\end{remark}

We next show that $Q_j$'s are pairwise dependent for large $n$.
\begin{prop}
\label{thm:depQjs}
For every $n \geq 2\kappa' +14$,
$Q_a$ and $Q_b$ are dependent for all $0\leq a\neq b \leq \kappa' $.
\end{prop}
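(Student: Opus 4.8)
The plan is to use, for each pair, the elementary dependence criterion recalled in the proof of Proposition \ref{thm:RdepQj}: if $P(X\in A)>0$, $P(Y\in B)>0$ and $P(X\in A,Y\in B)=0$, then $X$ and $Y$ are dependent. Since $\mathrm{card}(\mathcal{U}_n)=n$ is deterministic, the counting identities \eqref{eq:qjsum} and \eqref{eq:sumindegree} hold exactly with $k=1$, giving $\sum_{j=0}^{\kappa'}Q_j=n$ and $\sum_{j=0}^{\kappa'}jQ_j=n$, hence the deterministic relation \eqref{eq:qjdependence}, i.e.\ $Q_0=\sum_{j\ge2}(j-1)Q_j$. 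These yield two usable constraints: first, for any $1\le a,b\le\kappa'$ the \emph{resource inequality} $aQ_a+bQ_b\le\sum_j jQ_j=n$; second, a forced growth of $Q_0$ whenever a high indegree is present. I would split the argument according to whether $0\in\{a,b\}$.

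For the main case (both indices $\ge1$) the key is to realize, with positive probability, a value of $Q_c$ close to its maximum $\lfloor n/c\rfloor$. For $2\le c\le\kappa'$ I would use a waste-free gadget built from the sphere points $a_1,\dots,a_{\kappa'}$ of Lemma \ref{lem:qjtreshold}: inside a small ball place two points forming a reflexive pair and surround each by $c-1$ further points admitting it as their unique NN; this creates two vertices of indegree $c$ and $2(c-1)$ sources, and \emph{no} vertex of indegree $1$, using $2c$ points. Tiling $\lfloor n/(2c)\rfloor$ such gadgets in well-separated small balls (as in the $E_1\cap E_2$ construction in the proof of Corollary \ref{thm:llnclrqj}) and absorbing the $O(1)$ leftover points into reflexive pairs, at most one triangle, and at most one lone $c$-star yields a positive-probability event on which $Q_c$ equals a value $M_c$ with $cM_c\ge n-(c-1)$. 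Given $2\le a<b\le\kappa'$, choosing $\alpha=M_a$ and $\beta=M_b$ gives $a\alpha+b\beta\ge 2n-(a+b)+2>n$ as soon as $n>a+b-2$, which holds for $n\ge2\kappa'+14$ since $a+b\le2\kappa'-1$; the resource inequality then forces $\{Q_a=\alpha\}\cap\{Q_b=\beta\}=\emptyset$, while both marginal events have positive probability, so $Q_a$ and $Q_b$ are dependent. The pairs with $a=1$, $b\ge3$ are even simpler: since pairing up the points gives $M_1\in\{n,n-2\}$, the same inequality yields $\{Q_1=M_1\}\cap\{Q_b\ge1\}=\emptyset$.

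When one index is $0$ the resource inequality is vacuous, so I would instead exploit \eqref{eq:qjdependence}. For $n$ even, $\{Q_0=0\}$ (all reflexive pairs) has positive probability and forces $Q_j=0$ for every $j\ge2$, incompatible with $\{Q_b\ge1\}$ (positive by Lemma \ref{lem:qjtreshold}); for $n$ odd, $\{Q_0=1\}$ (one triangle, rest reflexive pairs) forces, via $1=\sum_{j\ge2}(j-1)Q_j$, that $Q_2=1$ and $Q_j=0$ for $j\ge3$, again incompatible with $\{Q_b\ge1\}$ for $b\ge3$. The remaining delicate pairs $(0,2)$, $(0,1)$ and $(1,2)$ are handled by comparing these extremal events with an explicit positive-probability event whose $Q_2$ (resp.\ $Q_0$) value differs: three disjoint triangles give $\{Q_2=3\}$ and $\{Q_0=3\}$ for $n$ odd, disjoint from $\{Q_0=1\}$ and from $\{Q_1=n-2\}$, while the even-$n$ analogues use $\{Q_1=n\}$ against $\{Q_2\ge1\}$ or $\{Q_0\ge1\}$. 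These require only moderate $n$ (a few beyond $9$) and are thus subsumed by the threshold $2\kappa'+14$.

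The combinatorics of the two identities is routine; the real work, and the source of the explicit constant $2\kappa'+14$, is the construction of a positive-probability configuration realizing $Q_c$ within a bounded additive constant of $\lfloor n/c\rfloor$. I expect the main obstacle to be verifying that the waste-free gadget is genuinely realizable as a $1$NN digraph—the two centers must be each other's nearest neighbor while every leaf keeps its own center as nearest neighbor, which constrains the radii and the angular spread of the leaves (the latter being exactly why at most $\kappa'$ leaves, hence indegree up to $\kappa'$, can be attached)—together with the leftover-and-parity bookkeeping needed to hit the \emph{exact} target counts $\alpha=M_a$ and $\beta=M_b$ rather than merely large values. Once these constructions are secured, each dependence statement reduces to the one-line incompatibility above.
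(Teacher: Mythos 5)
Your overall strategy is the same as the paper's: combine the elementary dependence criterion with the deterministic identity $\sum_{j}jQ_j=n$ (and $\sum_j Q_j=n$, hence $Q_0=\sum_{j\ge 2}(j-1)Q_j$ for the index-$0$ cases), and exhibit positive-probability configurations on which $Q_a$ and $Q_b$ are separately so large that the joint event would violate the counting identity. Your treatment of the pairs involving $0$ and $1$ via reflexive pairs and disjoint triangles is sound and essentially reproduces the paper's argument. The problem is in the main case $2\le a<b\le\kappa'$, where everything rests on the ``waste-free'' gadget: two mutually nearest centers, each with $c-1$ additional leaves, producing two indegree-$c$ vertices from $2c$ points. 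You flag its realizability as the expected obstacle, and it is a genuine gap, not a formality. If $v$ is the nearest neighbor of $u$ and each leaf $\ell$ of $u$ has $u$ as its nearest neighbor, then $\|\ell-u\|>\|u-v\|$ forces every leaf direction to make an angle exceeding $60^{o}$ with the direction $u\to v$, and the leaf directions must also be pairwise more than $60^{o}$ apart; so for $c=\kappa'$ the in-neighbors of $u$ form an \emph{extremal} $\kappa'$-point configuration, and the same holds at $v$. One must then additionally keep every leaf of $u$ farther from every leaf of $v$ than from $u$. A direct check in $d=2$, $c=\kappa'(2)=5$ shows these constraints are mutually incompatible for the natural placements (the admissible arc for the leaves of $v$, after excluding the neighborhoods forced by the five leaves of $u$, is too small to hold four directions pairwise more than $60^{o}$ apart), so the gadget as described cannot be realized at the top index, and your bound $cM_c\ge n-(c-1)$ is unproven exactly where you need it.

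The gap is repairable, and the repair is the paper's proof. Replace the double-star gadget by the single-star configuration of Lemma \ref{lem:qjtreshold} ($c+1$ points yielding one indegree-$c$ vertex, which is certainly realizable), giving $P(Q_c\ge t_c)>0$ with $t_c>\frac{n}{c+1}-1$. The joint event $\{Q_a\ge t_a\}\cap\{Q_b\ge t_b\}$ then forces $n\ge at_a+bt_b>n\bigl(\frac{a}{a+1}+\frac{b}{b+1}\bigr)-(a+b)$, which is still contradictory because $\frac{a}{a+1}+\frac{b}{b+1}>1$ whenever $ab>1$ --- but the margin is now only $O(1/(ab))$ rather than the factor $2$ your gadget would give, and making the resulting inequality fail for all admissible $a,b$ is precisely the computation that produces the threshold $n\ge 2\kappa'+14$. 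So the constant in the statement is an artifact of the realizable, less efficient gadget; your version would yield a better threshold if the waste-free gadget could be built, but that construction is the one step you have not secured and which appears to fail at $c=\kappa'$.
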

\begin{proof}
We first prove the statement for $1\leq a\neq b \leq \kappa' $.
For each $1\leq j\leq \kappa '(d)$, let $t_j$ and $s_j$ be integers such that
$n=(j+1)t_j+s_j$ and $0\leq s_j \leq j$
(such integers exist by the division algorithm applied to $n$ and $j+1$).
We show that $P(Q_j\geq t_j)>0$.

Consider the balls $B_{\epsilon}(a_0), B_{\epsilon}(a_1), \dots ,B_{\epsilon}(a_j)$ defined in the proof of Lemma \ref{lem:qjtreshold}.
Recall that whenever there exists exactly one point in each of these balls and the remaining sample points are far enough,
the indegree of the point in $B_{\epsilon}(a_0)$ is $j$.
Now consider $t_j$ copies of this configuration and a small ball far from each other.
Suppose each ball of the copies contains exactly one point and the remaining points are in the small ball.
In this case, we have at least $t_j$ points with indegree $j$.
See Figure \ref{fig:Qjtj} for an illustration.
Since having such a configuration is an event with positive probability, we obtain that $P(Q_j\geq t_j)>0$.

\begin{figure}
\centering
\scalebox{.8}{
\begin{tikzpicture}[line cap=round,line join=round,>=triangle 45,x=0.7cm,y=0.7cm]
\clip(0.8065584799999994,0.8379358400000035) rectangle (14.766459160000002,10.333502800000003);
\draw(3.,7.) circle (1.0519521408726079cm);
\draw(11.,8.) circle (1.0480135128662716cm);
\draw(7.,3.) circle (1.052118374984513cm);
\draw(4.5195213333333335,6.999521333333334) circle (0.35cm);
\draw(2.9837227987995663,8.519434225064701) circle (0.35cm);
\draw(1.4811638566971512,6.954370170131628) circle (0.35cm);
\draw(2.96556141262669,5.480868901221287) circle (0.35cm);
\draw(6.985590840411259,4.521115503872058) circle (0.35cm);
\draw(5.48225092689601,2.8978346874326784) circle (0.35cm);
\draw(7.005396585140842,1.4788258229680546) circle (0.35cm);
\draw(8.52,2.94) circle (0.35cm);
\draw(14.,3.) circle (0.35cm);
\draw(11.02,6.52) circle (0.35cm);
\draw(9.484572000000002,7.930426666666668) circle (0.35cm);
\draw(10.985544626852453,9.48006453987222) circle (0.35cm);
\draw(12.480094773396875,7.989070143493331) circle (0.35cm);
\draw [dash pattern=on 2pt off 2pt] (3.,7.) circle (0.35cm);
\draw [dash pattern=on 2pt off 2pt] (11.,8.) circle (0.35cm);
\draw [dash pattern=on 2pt off 2pt] (7.,3.) circle (0.35cm);
\begin{scriptsize}
\draw [fill=black] (2.9839680000000004,8.398938666666668) circle (1.5pt);
\draw [fill=black] (2.86684,6.915317333333335) circle (1.5pt);
\draw [fill=black] (1.3246546666666668,6.8177106666666685) circle (1.5pt);
\draw [fill=black] (3.0230106666666674,5.373132000000002) circle (1.5pt);
\draw [fill=black] (4.291897333333334,6.7396253333333345) circle (1.5pt);
\draw [fill=black] (9.718828000000002,7.7156920000000015) circle (1.5pt);
\draw [fill=black] (11.0658,9.706868000000002) circle (1.5pt);
\draw [fill=black] (11.163406666666669,6.622497333333335) circle (1.5pt);
\draw [fill=black] (12.74463466666667,8.106118666666669) circle (1.5pt);
\draw [fill=black] (11.104842666666668,7.793777333333335) circle (1.5pt);
\draw [fill=black] (7.,3.) circle (1.5pt);
\draw [fill=black] (7.083448000000001,4.631321333333335) circle (1.5pt);
\draw [fill=black] (5.6583906666666675,2.835358666666669) circle (1.5pt);
\draw [fill=black] (6.653978666666668,1.39078) circle (1.5pt);
\draw [fill=black] (8.43042,3.0110506666666685) circle (1.5pt);
\draw [fill=black] (13.779265333333337,2.913444000000002) circle (1.5pt);
\draw [fill=black] (14.247777333333337,3.245306666666669) circle (1.5pt);
\draw [fill=black] (14.15017066666667,2.85488) circle (1.5pt);
\end{scriptsize}
\end{tikzpicture}}
\caption{An illustration for $d=2$, $n=18$, $j=4$, $t_4=3$ and $s_4=3$.
Note that the sample points in the dashed circles are of indegree 4 and hence $Q_4\geq 3$. }
\label{fig:Qjtj}
\end{figure}
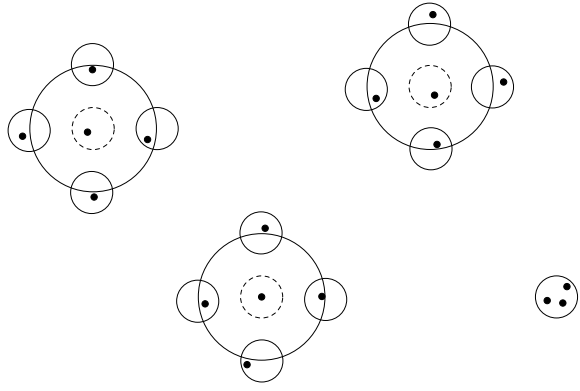

Recall that since the sum of the indegrees in a NND is equal to the number of arcs,
we have $n=\sum_{j=0}^{\kappa' } jQ_j$.
Therefore, whenever both of the events $\{Q_a\geq t_{a}\}$ and $\{Q_b\geq t_{b}\}$ occur,
we have
\begin{align}
n =\sum_{j=0}^{\kappa' } jQ_j\geq a Q_a+b Q_b \geq a t_{a}+bt_{b} >a \left(\frac{n}{a+1}-1 \right)+b \left(\frac{n}{b+1}-1\right) \nonumber
\end{align}
since $t_{a}> \frac{n}{{a}+1}-1$ and $t_b>\frac{n}{b+1}-1$.
Thus, we get
\begin{align}
{a}+b> n\left(\frac{{a}}{{a}+1}+\frac{b}{b+1}-1\right) \nonumber
\end{align}
which is equivalent to
\begin{align}\label{eq:qdep1}
n<\left( {a}+b \right) \frac{{a}b+{a}+b+1}{{a}b-1}=({a}+b) \left(1+\frac{{a}+b+2}{{a}b-1}\right).
\end{align}
We may assume ${a}>b$.
Then ${a}\geq 2$ and ${a}b-1\geq b({a}-1)$, thus we have
\begin{align}
&({a}+b) \left(1+\frac{{a}+b+2}{{a}b-1}\right)\leq ({a}+b) \left(1+\frac{{a}+b+2}{b({a}-1)}\right) \nonumber\\
&={a}+b+\frac{{a}^2+b^2+2{a}b+2{a}+2b}{b({a}-1)} \nonumber\\
&={a}+b+\frac{{a}({a}-1)+b^2+2({a}-1)b+3({a}-1)+4b+3}{b({a}-1)} \nonumber\\
&={a}+ \left(b+\frac{{a}}{b} \right)+\frac{b}{{a}-1}+2+\frac{3}{b}+\frac{4}{{a}-1}+\frac{3}{b({a}-1)} \nonumber\\
&\leq {a}+({a}+1)+1+2+3+4+3=2{a}+14 \leq 2\kappa' +14 \nonumber
\end{align}
since $b+\frac{{a}}{b}\leq {a}+1$ and $\frac{b}{{a}-1}\leq 1$.
Combining this result with the one in \eqref{eq:qdep1} gives $n< 2\kappa' +14$, which is a contradiction.
Thus, we have $P(\{Q_a\geq t_{a}\}\cap \{Q_b\geq t_b\})=0$ and
see that $Q_a$ and $Q_b$ are dependent whenever $1\leq {a}\neq b\leq \kappa '(d)$.

Now recall that if $n$ is even $\{R=n/2 \}\subseteq \{Q_0=0\}$ by \eqref{eq:RdepQj} and $P(R=n/2)>0$,
and if $n$ is odd $\{R=(n-1)/2\}\subseteq \{Q_0=1\}$ by \eqref{eq:RdepQj2} and $P(R=(n-1)/2)>0$.
Thus, we conclude that $P(Q_0\leq 1)>0$.
In addition, as
\begin{align*}
\sum_{i=0}^{\kappa' }  Q_i=n=\sum_{i=0}^{\kappa' }  iQ_i,
\end{align*}
we get
\begin{align*}
Q_0=\sum_{i=2}^{\kappa' }  (i-1)Q_i=Q_2+2Q_3+3Q_4+\cdots + (\kappa' -1)Q_{\kappa' }.
\end{align*}
Therefore, if $Q_0\leq 1$, then $Q_j=0$ for all $3\leq j \leq \kappa' $, $Q_2=Q_0\leq 1$, and $Q_1=n-2Q_0\geq n-2$,
i.e.,
\begin{align}\label{eq:qdep2}
\{Q_0\leq 1\}\subseteq \{Q_1\geq n-2\} \cap \{Q_2\leq 1\} \cap \left( \bigcap_{j=3}^{\kappa' } \{Q_j=0\} \right).
\end{align}
Whenever $3\leq j \leq \kappa' $, by Lemma \ref{lem:qjtreshold} we have $P(Q_j\geq 1)>0$, and also,
by \eqref{eq:qdep2} we obtain $P(Q_0\leq 1, Q_j\geq 1)=0$.
Consequently, $Q_0$ and $Q_j$ are dependent for every $3\leq j \leq \kappa' $.

Since $n\geq 2\kappa' +14\geq 18$, we get $t_2\geq 6$,
and therefore $P(Q_2\geq 6)>0$.
Then \eqref{eq:qdep2} implies $\{Q_0\leq 1\}\cap \{Q_2\geq 6\}=\emptyset$, and so, $Q_0$ and $Q_1$ are dependent.
As $\sum_{i=0}^{\kappa' } Q_i=n$ and $P(Q_2\geq 6)>0$, we have $P(Q_1\leq n-6)>0$.
Then, by \eqref{eq:qdep2} we see that $\{Q_0\leq 1\}\cap \{Q_1\leq n-6\}=\emptyset$
and obtain the dependence of $Q_0$ and $Q_1$ as well.
\end{proof}

\begin{remark}
Recall that the sample size is not fixed in $\mathcal{P}_n$,
and hence we can not apply the arguments used in this section for quantities based on $\mathcal{P}_n$.
\end{remark}

\section{Discussion and Conclusions}
\label{sec:tr2discconc}
In this paper, we study the asymptotic behavior of the number of copies of minuscule constructs in $k$NN digraphs of random point sets.
As point processes, we consider the uniform binomial point process and HPP over a given region.
For any realization of the point set, consider the $k$NN digraph of the data.
The quantity we are interested in is the number of subdigraphs of the $k$NN digraph which are isomorphic to a given weakly connected digraph.
We provide LLN and CLT results for any linear combination of such quantities.
In particular, we focus on the number reflexive pairs, the number of shared $k$NN's and the number of vertices with a given indegree.
A potential research direction is to consider the same $k$NN invariants under different point processes.
Monte Carlo simulations suggest the asymptotic normality of the quantities we study whenever the underlying process is a distribution with an a.e. continuous density.

Notice that the condition on the minuscule construct being weakly connected is crucial.
Because, if the fixed digraph is not weakly connected, then the strong stabilization condition fails in the proof of our main theorem.

All the asymptotic results we present have analogous versions for graphs and marked point sets as stated in
Remarks \ref{rem:kNNgraphversion} and \ref{rem:markedppversion}.
However, we prefer to mainly study on digraphs as the random variables we are interested in are based on $k$NN digraphs.

\end{document}